\def\({\left(}
\def \){ \right)}
\def\lf{\left}
\def\r{\right}
\newcommand{\R}{{\mathbb R}}
\newcommand{\M}{{\mathcal M}}
\newtheorem{definition}{Definition}[section]
\newtheorem{theorem}{Theorem}[section]
\newtheorem{lemma}{Lemma}[section]
\newtheorem{remark}{Remark}[section]
\begin{document}
\title{The K\"othe dual of mixed Morrey spaces and applications
\footnotetext{\hspace{-0.35cm} 2020{\emph{
Mathematics Subject Classification}}. 42B35, 30H35, 46E30, 42B20, 42B25.
\endgraf This project is supported by the National Natural Science Foundation of China (Grant No.12061069).}}
\author{Houkun Zhang,
Jiang Zhou\,\footnote{Corresponding author
E-mail address: \texttt{Zhoujiang@xju.edu.cn}.
}
\\[.5cm]
}
\date{}
\maketitle
{\bf Abstract:}\quad In this paper, we study the separable and weak convergence of mixed-norm Lebesgue spaces. Furthermore, we prove that the block space $\mathcal{B}_{\vec{p}\,'}^{p'_0}(\mathbb{R}^n)$ is the K\"othe dual of the mixed Morrey space $\mathcal{M}_{\vec{p}}^{p_0}(\mathbb{R}^n)$ by the Fatou property of these block spaces. The boundedness of the Hardy--Littlewood maximal function is further obtained on the block space $\mathcal{B}_{\vec{p}\,'}^{p'_0}(\mathbb{R}^n)$.  As applications, the characterizations of $BMO(\R^n)$ via the commutators of the fractional integral operator $I_{\alpha}$ on mixed Morrey spaces are proved as well as the block space $\mathcal{B}_{\vec{p}\,'}^{p'_0}(\mathbb{R}^n)$.
\par
{\bf Keywords:}\quad K\"othe dual; Mixed Morrey spaces; Block spaces; $BMO(\R^n)$; Characterization; Fractional integral operators.

\maketitle

\section{Introduction}\label{sec1}
\par
In recent years, since the precise structure of mixed-norm function spaces, the mixed-norm function spaces are widely used in the partial differential equations \cite{3,4,5,6}. In 1961, the mixed-norm Lebesgue space $L^{\vec{p}}(\mathbb{R}^n) (0<\vec{p}\le\infty)$ were studied by Benedek and Panzone \cite{7}. These spaces are natural generalizations of the classical Lebesgue space $L^p(\R^n)(0<p\le\infty)$. After that, many function spaces with mixed norm were introduced, such as mixed-norm Lorentz spaces \cite{8}, mixed-norm Lorentz-Marcinkiewicz spaces \cite{9}, mixed-norm Orlicz spaces \cite{10}, anisotropic mixed-norm Hardy spaces \cite{11}, mixed-norm Triebel-Lizorkin spaces \cite{12} and weak mixed-norm Lebesgue spaces \cite{13}. More studies can be refereed in \cite{14} and so on.

In 2019, Nogayama \cite{N2019(2),15} introduced mixed Morrey spaces
associated with mixed-norm Lebesgue spaces and Morrey spaces, and further proved that the block space $\mathcal{B}_{\vec{p}\,'}^{p'_0}(\mathbb{R}^n)$ are predual spaces of the mixed Morrey space $\mathcal{M}_{\vec{p}}^{p_0}(\R^n)$. In this paper, we will prove that $\mathcal{B}_{\vec{p}\,'}^{p'_0}(\mathbb{R}^n)$ are also K\"othe duals of these mixed Morrey spaces. For more studies and a deeper account of developments about the K\"othe dual we may consult \cite{18} and the references therein.

Given $0<\alpha<n$, for a measurable function $f$ on $\mathbb{R}^n$, the fractional integral operators are defined by
$$I_\alpha f(x)=\int_{\mathbb{R}^n}\frac{f(y)}{|x-y|^{n-\alpha}}dy,$$
and these operators play such a prominent role in real and harmonic analysis \cite{1,2}. By simple calculations,
\begin{equation}\label{eq1.1}
\int_{\mathbb{R}^n}f(x)I_{\alpha}g(x)dx=\int_{\mathbb{R}^n}g(x)I_{\alpha}f(x)dx
\end{equation}
can be obtained.

For a locally integrable function $b$ and a measurable function $f$, the commutator of the fractional integral operators is defined by
$$[b,I_\alpha]f(x):=b(x)I_\alpha f(x)-I_\alpha(bf)(x)=\int_{\mathbb{R}^n}\frac{(b(x)-b(y))f(y)}{|x-y|^{n-\alpha}}dy,$$
which was introduced by Chanillo in \cite{19}. For the commutators, a similar formula
\begin{equation}\label{eq1.2}
\int_{\mathbb{R}^n}f(x)[b,I_{\alpha}]g(x)dx=-\int_{\mathbb{R}^n}g(x)[b,I_{\alpha}]f(x)dx
\end{equation}
can be obtained, where $f,g$ are measurable functions.

Let us recall the classical results. In 1991, Di Fazio and Ragusa given the characterizations of $BMO(\mathbb{R}^n)$ via the boundedness of $[b,I_{\alpha}]$ on classical Morrey spaces \cite{20}. That is, let $1<p\le p_0<\infty$, $1<q\le q_0<\infty$, $\frac{p}{p_0}=\frac{q}{q_0}$, and $\frac{1}{p_0}-\frac{1}{q_0}=\frac{\alpha}{n}$. Then
$$b\in BMO(\mathbb{R}^n)\Rightarrow [b,I_{\alpha}]:\mathcal{M}_{p}^{p_0}(\mathbb{R}^n)\mapsto\mathcal{M}_{q}^{q_0}(\mathbb{R}^n).$$
Conversely, if $n-\alpha$ is an even integer, then
$$[b,I_{\alpha}]:\mathcal{M}_{p}^{p_0}(\mathbb{R}^n)\mapsto\mathcal{M}_{q}^{q_0}(\mathbb{R}^n)\Rightarrow b\in BMO(\mathbb{R}^n).$$
In 2006, Shirai given another characterizations of $BMO(\mathbb{R}^n)$ via the boundedness of $[b,I_{\alpha}]$ on classical Morrey spaces \cite{21}. That is
$$b\in BMO(\mathbb{R}^n)\Leftrightarrow [b,I_{\alpha}]:\mathcal{M}_{p}^{p_0}(\mathbb{R}^n)\mapsto\mathcal{M}_{q}^{q_0}(\mathbb{R}^n),$$
where $1<p\le p_0<\infty$, $1<q\le q_0<\infty$ and $\frac{1}{p}-\frac{1}{q}=\frac{1}{p_0}-\frac{1}{q_0}=\frac{\alpha}{n}$.

In 2019, Nogayama proved characterizations of $BMO(\mathbb{R}^n)$ via the operator $[b,I_{\alpha}]$ on mixed Morrey spaces \cite{N2019(2)}. That is
$$b\in BMO(\mathbb{R}^n)\Leftrightarrow [b,I_{\alpha}]:\mathcal{M}_{\vec{p}}^{p_0}(\mathbb{R}^n)\mapsto\mathcal{M}_{\vec{q}}^{q_0}(\mathbb{R}^n),$$
where $1<\frac{1}{p_0}<\frac{\alpha}{n}$, $\frac{n}{p_0}\le\sum_{i=1}^{n}\frac{1}{p_i}$, $\frac{n}{q_0}\le\sum_{i=1}^{n}\frac{1}{q_i}$, $\frac{1}{p_0}-\frac{1}{q_0}=\frac{\alpha}{n}$ and $\frac{\vec{p}}{p_0}=\frac{\vec{q}}{q_0}$.

In fact, the results of \cite{N2019(2)} can be regarded as a generation of \cite{20}. In the paper, we generalize the results of \cite{21} on mixed Morrey spaces. We point out that
$$\frac{1}{p}-\frac{\alpha}{n}=\frac{p_0}{p}\left(\frac{1}{p_0}-\frac{p}{p_0}\frac{\alpha}{n}\right) \ge\frac{p_0}{p}\left(\frac{1}{p_0}-\frac{\alpha}{n}\right)=\frac{p_0}{pq_0}.$$
Thus, it's easy to see that the results of \cite{20} can be regard as the improved results of \cite{21}. But, for mixed Morrey spaces, we can only prove that
$$
\sum_{i=1}^{n}\frac{1}{p_i}-\alpha\ge\sum_{i=1}^{n}\frac{p_0}{p_i}\cdot\left(\frac{1}{p_0}-\frac{\alpha}{n}\right).
$$
In other words, the results of \cite{N2019(2)} are not the improvements of Theorem \ref{th6.3}. Furthermore, when $\frac{1}{p_0}=\frac{1}{n}\sum_{i=1}^{n}\frac{1}{p_i}$ and $\frac{1}{q_0}=\frac{1}{n}\sum_{i=1}^{n}\frac{1}{q_i}$, the results of Theorem \ref{th6.3} are better than \cite{N2019(2)}.

This paper is organized as the following. In Section \ref{sec2}, main definitions are recalled. In Section \ref{sec3}, separability and weak convergence of mixed-norm Lebesgue spaces are also studied. We prove that the K\"othe dual of the mixed Morrey space $\mathcal{M}_{\vec{p}}^{p_0}(\mathbb{R}^n)$ are the block space $\mathcal{B}_{\vec{p}\,'}^{p'_0}(\mathbb{R}^n)$ by the property of Fatou in Section \ref{sec4}. In Section \ref{sec5}, we prove the bounds for the Hardy--Littlewood maximal function over the block spaces. As an application, the characterizations of $BMO(\mathbb{R}^n)$ is given and the boundedness of $I_{\alpha}$ and $[b,I_{\alpha}]$ are also proved in Section \ref{sec6}.

Finally, we make some conventions on notation. Let $\vec{p}=(p_1,\cdots,p_n)$, $\vec{q}=(q_1,\cdots,q_n)$ are n-tuples with $1<p_i,q_i<\infty,~i=1,...,n$. $\vec{p}<\vec{q}$ means that $p_i<q_i$ holds, and $\frac{1}{\vec{p}}+\frac{1}{\vec{p}\,'}=1$ means $\frac{1}{p_i}+\frac{1}{p_i'}=1$ holds, for each $i=1,...,n$. The symbol $Q$ denotes the cubes whose edges are parallel to the coordinate axes and $Q(x,r)$ denotes a open cube centered at $x$ of side length $r$. Let $cQ(x,r)=Q(x,cr)$. Denote by the symbol $\mathfrak{M}(\mathbb{R}^n)$ the set of all measurable function on $\mathbb{R}^n$. $A\sim B$ means that $A$ is equivalent to $B$. That is $A\le CB$ and $B\le CA$, where $C$ is a positive constant. Through all paper, every positive constant $C$ is not necessarily equal.

\section{Main definitions}\label{sec2}
We begin this section with the definition of some maximal functions in the following.

For a locally integrable function $f$, the Hardy--Littlewood maximal operator is defined by, for almost every $x\in\R^n$,
$$M(f)(x):=\sup_{Q\ni x}\frac{1}{|Q|}\int_{Q}|f(y)|dy,$$
where the supremum is taken over all cubes $Q\subset\mathbb{R}^n$ containing $x$, and the sharp maximal operator is defined by
$$M^{\sharp}(f)(x):=\sup_{Q\ni x}\frac{1}{|Q|}\int_{Q}|f(y)-f_Q|dy,$$
where $f_Q=\frac{1}{|Q|}\int_{Q}f$ and the supremum is taken over all cubes $Q\subset\mathbb{R}^n$ containing $x$.

The definition of ball (quasi-)Banach function spaces is presented as follows, which were introduced by Sawano et al. \cite{23}.

\begin{definition} \label{def2.2}
A (quasi-)Banach space $\mathbf{X}\subset\mathfrak{M}(\mathbb{R}^n)$ with (quasi-)norm $\|\cdot\|_{\mathbf{X}}$ is called a ball (quasi-)Banach function space if
\begin{itemize}
\item[(\romannumeral1)] $|g|\le|f|$ almost everywhere implies that $\|g\|_{\mathbf{X}}\le\|f\|_{\mathbf{X}}$;
\item[(\romannumeral2)] $0\le f_m\uparrow f$ almost everywhere implies that $\|f_m\|_{\mathbf{X}}\uparrow\|f\|_{\mathbf{X}};$
\item[(\romannumeral3)] If $|Q|<\infty$, then $\chi_Q\in \mathbf{X}$;
\item[(\romannumeral4)] If $f\ge 0$ almost everywhere and $|Q|<\infty$, then
$$\int_Q f(x)dx\le C_Q\|f\|_{\mathbf{X}};$$
for some positive constants $C_Q$, $0<c_Q<\infty$, depending on $Q$ but independent of $f$.
\end{itemize}
\end{definition}

\begin{remark}\label{re2.3}
If "cube" is replaced by "ball" in the preceding definition, it's also valid. In particular, if we replace any cubes $Q$ by any measurable sets $E$ in Definition \ref{def2.2}, it is (quasi-)Banach function spaces (see \cite[Definition 1.1 of Chapter 1]{25}).
\end{remark}
The definition of the associate space of a ball (quasi-)Banach function space can be found in \cite[chapter 1]{25}
as follows.
\begin{definition} \label{def2.3}
For any ball (quasi-)Banach function spaces $\mathbf{X}$, the associate space (also called the K\"othe dual) $\mathbf{X}'$ is defined by setting
$$\mathbf{X}':=\left\{f\in \mathfrak{M}(\mathbb{R}^n):\|f\|_{\mathbf{X}'}: =\sup_{g\in\mathbf{X},\|g\|_{\mathbf{X}}=1}\int_{\mathbb{R}^n}|f(x)g(x)|dx<\infty\right\},$$
where $\|\cdot\|_{\mathbf{X}'}$ is called the associate norm of $\|\cdot\|_{\mathbf{X}}.$
\end{definition}

\begin{remark}\label{re2.4}
\begin{itemize}
\item[(\romannumeral1)] In other literatures (for example \cite{33}) the Banach function spaces and the associate spaces are called the K\"othe spaces and the K\"othe dual respectively. Thus, the associate spaces of ball (quasi-)Banach function spaces are called the K\"othe dual of ball (quasi-)Banach function spaces.
\item[(\romannumeral2)] It is easy to prove that
    $$\int_{\mathbb{R}^n}|f(x)g(x)|dx\le\|g\|_{\mathbf{X}}\|f\|_{\mathbf{X}'}.$$
\item[(\romannumeral3)] Due to \cite[Lemma 2.6]{34}, if $\mathbf{X}$ is a ball Banach function space, then
    $$\|f\|_{\mathbf{X}}=\|f\|_{\mathbf{X}''},$$
    where $\mathbf{X}''=(\mathbf{X}')'$.
\item[(\romannumeral4)] According to the Definition \ref{def2.3} and (\romannumeral4), it is easy to know that
    \begin{equation}\label{eq2.1}
    \|f\|_{\mathbf{X}'}=\||f|\|_{\mathbf{X}'}=\sup_{g\in\mathbf{X},\|g\|_{\mathbf{X}}=1}\int_{\mathbb{R}^n}|f(x)g(x)|dx
    \end{equation}
    and
    \begin{equation}\label{eq2.2}
    \|f\|_{\mathbf{X}}=\|f\|_{\mathbf{X}''}=\||f|\|_{\mathbf{X}''}=\||f|\|_{\mathbf{X}}.
    \end{equation}
    Furthermore, by (\ref{eq2.1}) and (\ref{eq2.2}), for $f\in X'$ and $g\in \mathbf{X}$,
\begin{align*}
\|f\|_{\mathbf{X}'}&=\sup_{\|g\|_{\mathbf{X}}=1}\int_{\mathbb{R}^n}|f(x)g(x)|dx
=\sup_{\|g\|_{\mathbf{X}}=1}\left|\int_{\mathbb{R}^n}f(x)g(x)dx\right|
=\sup_{\|g\|_{\mathbf{X}}=1}\int_{\mathbb{R}^n}f(x)g(x)dx.
\end{align*}
Indeed, we have
$$\int_{\mathbb{R}^n}f(x)g(x)dx\le\left|\int_{\mathbb{R}^n}f(x)g(x)dx\right|\le\int_{\mathbb{R}^n}|f(x)g(x)|dx,$$
and for $h=sgn(fg)|g|$,
$$\int_{\mathbb{R}^n}f(x)h(x)dx=\left|\int_{\mathbb{R}^n}f(x)h(x)dx\right|=\int_{\mathbb{R}^n}|f(x)g(x)|dx.$$
\end{itemize}
\end{remark}
We still recall the notion of the convexity of ball (quasi-)Banach function spaces, which can be found in \cite[Definition 2.6]{23}.
\begin{definition} \label{def2.5}
Let $\mathbf{X}$ be a ball (quasi-)Banach function space and $0<p<\infty$. The p-convexification $\mathbf{X}^p$ of $\mathbf{X}$ is defined by setting
$$\mathbf{X}^p:=\left\{f\in\mathfrak{M}(\mathbb{R}^n):|f|^p\in\mathbf{X}\right\}$$
equipped with the (quasi-)norm $\|f\|_{\mathbf{X}^p}:=\| |f|^p\|_{\mathbf{X}}^{\frac{1}{p}}$.
\end{definition}

Obviously, if $\mathbf{X}$ is a ball (quasi-)Banach function space, the $\mathbf{X}^p$ and $\mathbf{X}'$ are also ball (quasi-)Banach function spaces. Now, let us recall $A_{p}(\mathbb{R}^n)$-weight and weighted Lebesgue spaces.
\begin{definition}
Let $1<p<\infty$. A weight $w$ is said to be of class $A_p(\mathbb{R}^n)$ if
$$[\omega]_{A_p(\mathbb{R}^n)}:=\sup_{Q\subset\mathbb{R}^n}\left(\frac{1}{|Q|}\int_Q\omega(x)dx\right) \left(\frac{1}{|Q|}\int_Q\omega(x)^{\frac{1}{1-p}}dx\right)^{p-1}<\infty.$$
A weight $w$ is said to be of class $A_1(\mathbb{R}^n)$ if
$$[\omega]_{A_1(\mathbb{R}^n)}:=\sup_{Q\subset\mathbb{R}^n}\left(\frac{1}{|Q|}\int_Q\omega(x)dx \cdot\|\omega^{-1}\|_{L^{\infty}(\mathbb{R}^n)}\right)<\infty.$$
Define $A_{\infty}(\mathbb{R}^n):=\bigcup_{1\le p<\infty}A_p(\mathbb{R}^n)$. It is well-know that $A_p(\mathbb{R}^n)\subset A_q(\mathbb{R}^n)$ for $1\le p\le q\le\infty$.
\end{definition}
\begin{definition}
Let $0<p<\infty$ and $\omega\in A_{\infty}(\mathbb{R}^n)$. The weighted Lebesgue space $L_{\omega}^p(\mathbb{R}^n)$ is defined to be the set of all measurable functions $f$ on $\mathbb{R}^n$ such that
$$\|f\|_{L_{\omega}^p(\mathbb{R}^n)}:=\left(\int_{\mathbb{R}^n}|f(x)|^p\omega(x)dx\right)^{\frac{1}{p}}<\infty.$$
\end{definition}

Now, the definitions of mixed-norm Lebesgue spaces are given as follows, which was introduced by Benedek and Panzone \cite{7}.
\begin{definition} \label{def2.6}
Let $1<\vec{p}<\infty$. The mixed Lebesgue space $L^{\vec{p}}(\mathbb{R}^n)$ is defined by the set of all measurable
functions $f$ on $\mathbb{R}^n$, such that 
$$\left\|f\right\|_{L^{\vec{p}}(\mathbb{R}^n)}=\left(\int_{\mathbb{R}}\cdots\left(\int_{\mathbb{R}}\left|f(x)\right|^{p_1}\,dx_1\right)
^{\frac{p_2}{p_1}}\cdots\,dx_n\right)^{\frac{1}{p_n}}<\infty.$$
\end{definition}
\begin{remark}\label{re2.7}
\begin{itemize}
\item[(\romannumeral1)] Note that if $p_1=p_2=\cdots=p_n=p$, then $L^{\vec{p}}(\mathbb{R}^n)$ are reduced to the classical Lebesgue space $L^p(\mathbb{R}^n)$, and
    $$\left\|f\right\|_{L^{\vec{p}}(\mathbb{R}^n)}=\left\|f\right\|_{L^{p}(\mathbb{R}^n)}=\left(\int_{\mathbb{R}^n}\left|f(x)\right|^{p} dx\right)^{\frac{1}{p}}.$$
\item[(\romannumeral2)] The mixed-norm Lebesgue space $L^{\vec{p}}(\mathbb{R}^n)$ is a ball Banach function space (see \cite{TYYZ2021}).
\end{itemize}
\end{remark}
In 2019, Nogayama introduced mixed Morrey spaces \cite{15,N2019(2)} with combining mixed Lebesgue spaces and Morrey spaces as follows.
\begin{definition} \label{def2.8}
Let $0<\vec{p}\le\infty$ and $0<p_0\le\infty$ satisfy
$$\frac{n}{p_0}\le\sum_{j=1}^{n}\frac{1}{p_j}.$$
The mixed Morrey space $\mathcal{M}_{\vec{p}}^{p_0}(\mathbb{R}^n)$ is defined as the set of all measurable functions $f$ such that
$$\|f\|_{\mathcal{M}_{\vec{p}}^{p_0}(\R^n)}:=\sup\bigg\{|Q|^{\frac{1}{p_0} -\frac{1}{n}(\sum_{j=1}^n\frac{1}{p_j})}\|f\chi_Q\|_{L^{\vec{p}}(\mathbb{R}^n)}:Q~\text{is a cube in }\mathbb{R}^n\bigg\}<\infty.$$
\end{definition}
\begin{remark}\label{re2.9}
\begin{itemize}
\item[(\romannumeral1)]It is obvious that if $p_1=p_2=\cdots=p_n=p$, then $\mathcal{M}_{\vec{p}}^{p_0}(\R^n)=\mathcal{M}_p^{p_0}(\R^n)$ and if $\frac{1}{p_0}=\frac{1}{n}\sum_{j=1}^n\frac{1}{p_j}$, then $\mathcal{M}_{\vec{p}}^{p_0}(\R^n)=L^{\vec{p}}(\R^n).$
\item[(\romannumeral2)]The Mixed Morrey space $\mathcal{M}_{\vec{p}}^{p_0}(\mathbb{R}^n)$ is a ball Banach function space. In fact, according to \cite[Remark 3.1]{15}, mixed Morrey spaces are Banach spaces. Besides, it is easy to prove that:
    \begin{itemize}
    \item[(a)]If $|g|\le|f|$ almost everywhere, then
    $$\|g\chi_{Q}\|_{L^{\vec{p}(\mathbb{R}^n)}}\le\|f\chi_{Q}\|_{L^{\vec{p}(\mathbb{R}^n)}},$$
    $$\|g\chi_{Q}\|_{\mathcal{M}_{\vec{p}}^{p_0}(\mathbb{R}^n)}\le\|f\chi_{Q}\|_{\mathcal{M}_{\vec{p}}^{p_0}(\mathbb{R}^n)};$$
    \item[(b)]If $0\le f_m\uparrow f$ almost everywhere, then
    \begin{align*}
    \lim_{m\rightarrow\infty}\|f_m\|_{\mathcal{M}_{\vec{p}}^{p_0}(\mathbb{R}^n)}
    &=\sup_{Q\subset\mathbb{R}^n}|Q|^{\frac{1}{p_0} -\frac{1}{n}\sum_{j=1}^n\frac{1}{p_j}}\lim_{m\rightarrow\infty}\|f_m\chi_Q\|_{L^{\vec{p}}(\mathbb{R}^n)}\\
    &=\sup_{Q\subset\mathbb{R}^n}|Q|^{\frac{1}{p_0} -\frac{1}{n}\sum_{j=1}^n\frac{1}{p_j}}\|f\chi_Q\|_{L^{\vec{p}}(\mathbb{R}^n)}\\
    &=\|f\|_{\mathcal{M}_{\vec{p}}^{p_0}(\mathbb{R}^n)};
    \end{align*}
    \item[(c)]If $|Q|<\infty$, then
    $$\|\chi_Q\|_{\mathcal{M}_{\vec{p}}^{p_0}(\mathbb{R}^n)}=|Q|^{\frac{1}{p_0}}<\infty;$$
    \item[(d)]If $|Q|<\infty$ and $f>\infty$
     \begin{align*}
     \int_Q f(x)dx&\le |Q|^{\frac{1}{n}\sum_{i=1}^{n}\frac{1}{p'_i}}\cdot\|f\|_{L^{\vec{p}}(\mathbb{R}^n)}\\
     &=|Q|^{1-\frac{1}{p_0}}\cdot|Q|^{\frac{1}{p_0}-\frac{1}{n}\sum_{i=1}^{n}\frac{1}{p_i}}\|f\|
     _{L^{\vec{p}}(\mathbb{R}^n)}\\
     &\le|Q|^{1-\frac{1}{p_0}}\cdot\|f\|_{\mathcal{M}_{\vec{p}}^{p_0}(\mathbb{R}^n)}.
\end{align*}
But the authors pointed out that mixed Morrey spaces are not Banach function spaces (see \cite[Example 3.3]{18}).
\end{itemize}
\end{itemize}
\end{remark}

In \cite{N2019(2)}, Nogayama introduced the block space $\mathcal{B}_{\vec{p}\,'}^{p'_0}(\mathbb{R}^n)$, which is the predual space of mixed Morrey space.
\begin{definition} \label{def2.10}
Let $1<p_0,\vec{p}<\infty$ and $\frac{n}{p_0}<\sum_{i=1}^{n}\frac{1}{p_i}$. A measurable function $b(x)$ is said to be a $(p'_0,\vec{p}\,')$-block if there exists a cube $Q$ such that
$$\mathrm{supp}\,b\subset Q,~\|b\|_{L^{\vec{p}\,'}(\mathbb{R}^n)}\le |Q|^{\frac{1}{p_0}-\frac{1}{n}\sum_{i=1}^{n}\frac{1}{p_i}}.$$
The block space $\mathcal{B}_{\vec{p}\,'}^{p'_0}(\mathbb{R}^n)$ is denoted by that a measurable function set of $f=\sum_{i=1}^{\infty}\lambda_{i}b_i(x)$, where $\{\lambda_{i}\}_{i=1}^{\infty}\in \ell^{1}$ and $b_{i}$ is a $(p'_0,\vec{p}\,')$-block for each $i$. The norm $\|f\|_{\mathcal{B}_{\vec{p}\,'}^{p'_0}(\mathbb{R}^n)}$ for $f\in \mathcal{B}_{\vec{p}\,'}^{p'_0}(\mathbb{R}^n)$ is defined as
\begin{align*}
\|f\|_{\mathcal{B}_{\vec{p}\,'}^{p'_0}(\mathbb{R}^n)} =\inf\{\|\{\lambda_{i}\}_{i=1}^{\infty}\|_{\ell^{1}}:f&=\sum_{i=1}^{\infty}\lambda_{i}b_i(x),\\
&\{\lambda_{i}\}_{i=1}^{\infty}\in \ell^{1},b_{i}~\text{is a}~(p'_0,\vec{p}\,')-\text{block for any }i\}.
\end{align*}
\end{definition}

Let us recall the definition of the $BMO(\mathbb{R}^n)$ space.
\begin{definition} \label{def2.12}
If $b$ is a measurable function on $\mathbb{R}^n$ and satisfies that
$$\|b\|_{BMO(\mathbb{R}^n)}=\sup_{Q\subset\mathbb{R}^n}\frac{1}{|Q|}\int_{Q}|b(y)-b_Q|dy<\infty,$$
then $b\in BMO(\mathbb{R}^n)$ and $\|b\|_{BMO(\mathbb{R}^n)}$ is the norm of $b$ in $BMO(\mathbb{R}^n)$.
\end{definition}

\section{Weak convergence of mixed-norm Lebesgue spaces}\label{sec3}
\par
In this section, we discuss the separable and weak convergence of mixed-norm Lebesgue spaces.
\begin{theorem}\label{Seth}
Let $1<\vec{p}<\infty$. Then the mixed-norm Lebesgue space $L^{\vec{p}}(\mathbb{R}^n)$ is separable space.
\end{theorem}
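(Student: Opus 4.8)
The plan is to exhibit a countable dense subset. The natural candidate is the collection of finite rational linear combinations of indicator functions of dyadic cubes with rational endpoints—more precisely, simple functions $\sum_{k=1}^N c_k \chi_{Q_k}$ where each $c_k \in \mathbb{Q}$ (or $\mathbb{Q}+i\mathbb{Q}$ in the complex case) and each $Q_k$ is a bounded cube with rational center and rational side length. This set is clearly countable, so the entire content of the theorem is the density claim. The approach has two layers: first reduce a general $f \in L^{\vec{p}}(\mathbb{R}^n)$ to a compactly supported bounded function, then approximate that function by rational dyadic simple functions in the mixed norm.

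For the first reduction I would truncate: given $f$ and $\varepsilon>0$, set $f_{N} := f\,\chi_{\{|f|\le N\}}\,\chi_{Q(0,N)}$. Since $|f_N|\le|f|$ and $f_N\to f$ pointwise a.e., monotone-type control together with the lattice property (Definition \ref{def2.2}(i)) of the ball Banach function space $L^{\vec{p}}$ gives $\|f-f_N\|_{L^{\vec{p}}}\to 0$; here I would invoke the dominated-convergence behaviour of the mixed norm, which follows by applying the scalar dominated convergence theorem iteratively in each variable $x_1,\dots,x_n$ to the nested integrals defining $\|\cdot\|_{L^{\vec{p}}}$, using that $\|f\|_{L^{\vec{p}}}<\infty$ forces each inner integral to be finite a.e. For the second layer, a bounded function supported in a fixed cube can be approximated pointwise and boundedly by dyadic simple functions $s_m$ with $|s_m|\le N$, all supported in a slightly larger fixed cube; then $|f_N - s_m|\le 2N$ on a set of finite measure, and since $\chi_{Q}\in L^{\vec{p}}$ with finite norm, another application of mixed-norm dominated convergence yields $\|f_N - s_m\|_{L^{\vec{p}}}\to 0$. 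Finally the real coefficients and cube endpoints are replaced by rationals, and one checks the norm changes by at most $O(\varepsilon)$ using $\|\chi_{Q'}-\chi_{Q}\|_{L^{\vec{p}}}\to 0$ as $Q'\to Q$, which again reduces to continuity of the one-dimensional Lebesgue measure in each coordinate.

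The main obstacle, and the only genuinely mixed-norm feature of the argument, is establishing the dominated convergence statement for $\|\cdot\|_{L^{\vec{p}}}$: unlike the classical $L^p$ case one cannot cite a single integral, but must peel off the norm one variable at a time. The clean way is an inductive lemma: if $g_m\to g$ a.e. with $|g_m|\le h$ and $\|h\|_{L^{\vec{p}}}<\infty$, then $\|g_m-g\|_{L^{\vec{p}}}\to 0$. Fixing $(x_2,\dots,x_n)$, the innermost $L^{p_1}(dx_1)$-norms converge by scalar dominated convergence; these norms are themselves dominated (in $(x_2,\dots,x_n)$) by the corresponding norms of $h$, so the next integration in $x_2$ converges by dominated convergence with the $L^{p_1}$-norm of $h$ as envelope, and so on up to $x_n$. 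Once this lemma is in hand every displayed step above is routine, and the countability of the approximating family is immediate.
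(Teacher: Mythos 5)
Your argument is correct, but it reaches the countable dense family (rational combinations of dyadic-cube indicators --- the same family $\Gamma$ the paper uses) by a genuinely different route. The paper's proof is short because it outsources the analytic work: it cites the density of $C_c(\mathbb{R}^n)$ in $L^{\vec{p}}(\mathbb{R}^n)$ (Lemma \ref{le3.2}, quoted from \cite{TYYZ2021}), and then uses uniform continuity of a compactly supported continuous $g$ to approximate it uniformly on its support by a dyadic step function with rational coefficients, which immediately controls the mixed norm since everything lives in one fixed cube. You instead stay entirely inside $L^{\vec{p}}$: truncate $f$ in size and support, and drive both the truncation error and the dyadic-simple-function approximation error to zero via a dominated convergence theorem for the mixed norm, which you prove by peeling off the nested integrals one variable at a time (this iterated argument is sound for $1<\vec{p}<\infty$ since every inner norm of the envelope is finite a.e. and serves as the dominating function at the next level; it is essentially the Benedek--Panzone dominated convergence statement). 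What your route buys is self-containedness --- no appeal to density of continuous functions --- at the cost of carrying the inductive DCT lemma; what the paper's route buys is brevity, at the cost of an external citation. The one step you should make explicit is how the dyadic simple functions $s_m$ converging to $f_N$ pointwise a.e.\ and boundedly are produced: the natural construction is the dyadic averaging (conditional expectation over generation-$m$ dyadic cubes), whose a.e.\ convergence is the Lebesgue differentiation theorem along the dyadic filtration and whose values are bounded by $N$; only finitely many generation-$m$ cubes meet the support, so each $s_m$ is a finite sum. With that pinned down, and noting that dyadic cubes already have rational data so only the coefficients need rationalizing, your proof is complete.
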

\begin{theorem}\label{WCth}
Let $1<\vec{p}<\infty$ and $\{f_k\}_{k=1}^{\infty}\subset L^{\vec{p}}(\mathbb{R}^n)$. If there exists a positive constant $M$ such that
$$\|f_k\|_{L^{\vec{p}}(\mathbb{R}^n)}<M,$$
then there exists a subset $\{f_{k_j}\}_{j=1}^{\infty}$ is weak convergence on $L^{\vec{p}}(\mathbb{R}^n)$.
\end{theorem}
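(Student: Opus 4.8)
The plan is to reduce the statement to the classical fact that a bounded sequence in a separable space whose dual admits an integral representation has a weakly convergent subsequence, realized concretely through a Cantor diagonal argument against a dense family of test functions. The two structural inputs I would use are: first, the mixed-norm H\"older inequality $\int_{\mathbb{R}^n}|f(x)g(x)|\,dx\le\|f\|_{L^{\vec{p}}(\mathbb{R}^n)}\|g\|_{L^{\vec{p}\,'}(\mathbb{R}^n)}$, valid for $1<\vec{p}<\infty$ (Benedek and Panzone \cite{7}); and second, the duality identifications $(L^{\vec{p}}(\mathbb{R}^n))^{*}=L^{\vec{p}\,'}(\mathbb{R}^n)$ and $(L^{\vec{p}\,'}(\mathbb{R}^n))^{*}=L^{\vec{p}}(\mathbb{R}^n)$ realized by this pairing, again from \cite{7}. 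Since $1<\vec{p}\,'<\infty$, Theorem \ref{Seth} applied to the exponent $\vec{p}\,'$ shows that $L^{\vec{p}\,'}(\mathbb{R}^n)$ is separable; I would fix a countable dense subset $\{g_m\}_{m=1}^{\infty}$ of $L^{\vec{p}\,'}(\mathbb{R}^n)$.

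The first step is the diagonal extraction. For each fixed $m$ the scalar sequence $\{\int_{\mathbb{R}^n}f_k g_m\,dx\}_{k=1}^{\infty}$ is bounded, because H\"older yields $|\int_{\mathbb{R}^n}f_k g_m\,dx|\le M\|g_m\|_{L^{\vec{p}\,'}(\mathbb{R}^n)}$. Running the Bolzano--Weierstrass theorem successively over $m=1,2,\dots$ and passing to the diagonal, I obtain a single subsequence $\{f_{k_j}\}_{j=1}^{\infty}$ for which $\lim_{j\to\infty}\int_{\mathbb{R}^n}f_{k_j}g_m\,dx$ exists for every $m$. The second step upgrades this convergence from the dense family to all of $L^{\vec{p}\,'}(\mathbb{R}^n)$ by a standard $3\varepsilon$ estimate: given $g\in L^{\vec{p}\,'}(\mathbb{R}^n)$ and $\varepsilon>0$, choose $g_m$ with $\|g-g_m\|_{L^{\vec{p}\,'}(\mathbb{R}^n)}<\varepsilon$, so that for $i,j$ large
$$\left|\int_{\mathbb{R}^n}(f_{k_i}-f_{k_j})g\,dx\right|\le 2M\varepsilon+\left|\int_{\mathbb{R}^n}(f_{k_i}-f_{k_j})g_m\,dx\right|.$$
The uniform bound $\|f_{k_j}\|_{L^{\vec{p}}(\mathbb{R}^n)}<M$ together with the convergence from the first step shows $\{\int_{\mathbb{R}^n}f_{k_j}g\,dx\}_j$ is Cauchy, hence convergent. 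I would then define $\Lambda(g):=\lim_{j\to\infty}\int_{\mathbb{R}^n}f_{k_j}g\,dx$, which is linear and satisfies $|\Lambda(g)|\le M\|g\|_{L^{\vec{p}\,'}(\mathbb{R}^n)}$, so $\Lambda\in(L^{\vec{p}\,'}(\mathbb{R}^n))^{*}$.

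Finally I would invoke the duality $(L^{\vec{p}\,'}(\mathbb{R}^n))^{*}=L^{\vec{p}}(\mathbb{R}^n)$ to produce $f\in L^{\vec{p}}(\mathbb{R}^n)$ with $\Lambda(g)=\int_{\mathbb{R}^n}fg\,dx$ for all $g\in L^{\vec{p}\,'}(\mathbb{R}^n)$; by construction $\int_{\mathbb{R}^n}f_{k_j}g\,dx\to\int_{\mathbb{R}^n}fg\,dx$ for every such $g$, which is exactly weak convergence $f_{k_j}\rightharpoonup f$ in $L^{\vec{p}}(\mathbb{R}^n)$. The main obstacle is not the diagonal machinery, which is routine once separability is in hand, but securing the two duality identities in the mixed-norm setting, and in particular the representation $(L^{\vec{p}\,'})^{*}=L^{\vec{p}}$, equivalently the reflexivity of $L^{\vec{p}}(\mathbb{R}^n)$: without it the limiting functional $\Lambda$ need not be represented by an actual element of $L^{\vec{p}}(\mathbb{R}^n)$, and one would only obtain weak-$*$ convergence in a bidual rather than genuine weak convergence. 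For this I would rely directly on the Benedek--Panzone duality theorem \cite{7}.
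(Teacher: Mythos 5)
Your proposal is correct and takes essentially the same route as the paper's proof: both rely on the Benedek--Panzone duality from \cite{7}, obtain a countable dense family in $L^{\vec{p}\,'}(\mathbb{R}^n)$ via Theorem \ref{Seth}, run a Cantor diagonal extraction using the mixed-norm H\"older bound, upgrade convergence to all of $L^{\vec{p}\,'}(\mathbb{R}^n)$ by the standard $\varepsilon$-approximation argument, and finally represent the limit functional by an element of $L^{\vec{p}}(\mathbb{R}^n)$ through the duality theorem. If anything, your write-up is slightly more explicit than the paper in flagging that the last step requires $(L^{\vec{p}\,'}(\mathbb{R}^n))^{*}=L^{\vec{p}}(\mathbb{R}^n)$ (reflexivity), which the paper invokes implicitly by citing \cite[Theorem 1.a]{7} a second time.
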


Before we give our proofs, the following lemma is necessary. According to \cite[Proposition 3.8]{TYYZ2021}, the following lemma can be obtained.
\begin{lemma}\label{le3.2}
Let $1<\vec{p}<\infty$. Then $C_c(\mathbb{R}^n)$ is dense in $L^{\vec{p}}(\mathbb{R}^n)$, where $C_c(\mathbb{R}^n)$ denoted by the set of continuous functions with compact support.
\end{lemma}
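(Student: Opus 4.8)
The plan is to prove density by the classical three-step scheme—reduce to simple functions, reduce further to indicators of bounded sets of finite measure, and finally approximate such an indicator by a continuous function of compact support—while paying attention to the iterated structure of the mixed norm. The single tool that makes all three steps go through is a dominated convergence theorem for $L^{\vec p}(\R^n)$: if $g_m\to 0$ almost everywhere and $|g_m|\le h$ with $h\in L^{\vec p}(\R^n)$, then $\|g_m\|_{L^{\vec p}(\R^n)}\to 0$. First I would establish this by peeling off the $n$ coordinates one at a time, starting from the innermost integral. By Fubini, for almost every fixed $(x_2,\dots,x_n)$ one has $g_m(\cdot,x_2,\dots,x_n)\to 0$ almost everywhere in $x_1$ and $|g_m|^{p_1}\le h^{p_1}$ with $\int_{\R}h(\cdot,x_2,\dots,x_n)^{p_1}\,dx_1<\infty$, so the classical dominated convergence theorem gives that the innermost integral tends to $0$ pointwise in the remaining variables; since this inner integral is dominated by $\big(\int_{\R}h^{p_1}\,dx_1\big)^{p_2/p_1}$, which is integrable in $x_2$ for almost every $(x_3,\dots,x_n)$ because $h\in L^{\vec p}(\R^n)$, a second application controls the next layer, and iterating through all $n$ variables yields the claim.

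With this dominated convergence theorem in hand, I would carry out the reduction. Given $f\in L^{\vec p}(\R^n)$, splitting into real and imaginary, positive and negative parts lets me assume $f\ge 0$. Choosing simple functions $0\le s_m\uparrow f$ pointwise, we have $|f-s_m|\le f$ and $f-s_m\to 0$, so $\|f-s_m\|_{L^{\vec p}(\R^n)}\to 0$. Each $s_m\in L^{\vec p}(\R^n)$, but its support may have infinite measure, so I would next truncate to a large cube: since $\chi_{\{|x|>R\}}s_m\to 0$ almost everywhere and is dominated by $s_m\le f$, dominated convergence gives $\|s_m-s_m\chi_{Q(0,R)}\|_{L^{\vec p}(\R^n)}\to 0$ as $R\to\infty$. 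By linearity this reduces the whole problem to approximating a single indicator $\chi_F$ with $F$ bounded and $|F|<\infty$.

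For the final step I would use the regularity of Lebesgue measure to pick, for each $\varepsilon>0$, a compact $K$ and a bounded open $U$ with $K\subset F\subset U$ and $|U\setminus K|<\varepsilon$, and then Urysohn's lemma to produce $g\in C_c(\R^n)$ with $\chi_K\le g\le\chi_U$, so that $|\chi_F-g|\le\chi_{U\setminus K}$. It remains to check that $\|\chi_{U\setminus K}\|_{L^{\vec p}(\R^n)}$ is small when $|U\setminus K|$ is small, and this is exactly where the mixed norm is delicate, so I expect it to be the main obstacle: unlike the classical case, $\|\chi_A\|_{L^{\vec p}(\R^n)}$ is not a function of $|A|$ alone, so smallness of measure does not by itself force smallness of norm. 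I would resolve it through the dominated convergence theorem once more. Fixing a cube $Q_0\supset U$ it suffices to show that $\|\chi_A\|_{L^{\vec p}(\R^n)}\to 0$ whenever $A\subset Q_0$ and $|A|\to 0$; were this false, some sequence $A_j\subset Q_0$ with $|A_j|\to 0$ would keep $\|\chi_{A_j}\|_{L^{\vec p}(\R^n)}$ bounded away from $0$, yet passing to a subsequence with $\sum_j|A_j|<\infty$, Borel--Cantelli gives $\chi_{A_j}\to 0$ almost everywhere, and domination by $\chi_{Q_0}\in L^{\vec p}(\R^n)$ together with the dominated convergence theorem forces $\|\chi_{A_j}\|_{L^{\vec p}(\R^n)}\to 0$, a contradiction. (Equivalently, one may invoke the absolute continuity of the norm of $L^{\vec p}(\R^n)$ valid for $1<\vec p<\infty$.) Combining the three steps shows that $C_c(\R^n)$ is dense in $L^{\vec p}(\R^n)$.
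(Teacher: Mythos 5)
Your proof is correct, but it takes a genuinely different route from the paper: the paper does not prove Lemma \ref{le3.2} at all, obtaining it instead by direct citation of \cite[Proposition 3.8]{TYYZ2021}, i.e., it outsources density to the general machinery developed for ball Banach function spaces, whereas you give a self-contained elementary argument. Your workhorse --- a dominated convergence theorem for $L^{\vec p}(\R^n)$ proved by Fubini slicing and coordinate-by-coordinate application of the classical theorem (equivalently, the absolute continuity of the mixed norm) --- is sound: the delicate points, namely that for a.e.\ $(x_2,\dots,x_n)$ the slice of the exceptional null set is null and the slice of the dominant $h$ lies in $L^{p_1}(\R)$, and that the resulting inner norms are again dominated in the $(n-1)$-dimensional mixed space so that the induction closes, are all handled. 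The three-step reduction (simple functions, truncation to a cube, inner/outer regularity plus Urysohn) is the standard $L^p$ scheme and transfers verbatim once that convergence theorem is available, and you correctly identify the one genuinely mixed-norm obstacle: $\|\chi_A\|_{L^{\vec p}(\R^n)}$ is not a function of $|A|$ alone. Your Borel--Cantelli contradiction disposes of it, though a direct estimate is available and shorter: for $A\subset Q_0$ and any $r>\max_i p_i$, the mixed-norm H\"older inequality with $\frac{1}{s_i}=\frac{1}{p_i}-\frac{1}{r}$ gives $\|\chi_A\|_{L^{\vec p}(\R^n)}=\|\chi_A\chi_{Q_0}\|_{L^{\vec p}(\R^n)}\le |A|^{\frac{1}{r}}\|\chi_{Q_0}\|_{L^{\vec s}(\R^n)}$, which tends to $0$ with $|A|$. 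As for what each approach buys: the paper's citation is brief and situates $L^{\vec p}(\R^n)$ inside the ball Banach function space framework used throughout; your argument makes the lemma independent of external references and makes visible exactly where the hypothesis $1<\vec p<\infty$ enters, since dominated convergence (absolute continuity of the norm) fails as soon as some $p_i=\infty$, which is precisely why the density statement must be restricted to finite exponents.
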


We denote by $\mathcal{Q}_k(k\in \mathbb{Z})$ the collection of cubes in $\mathbb{R}^n$ which are congruent to $[0,2^{-k})^n$ and vertices lie on the lattice $2^{-k}\mathbb{Z}^n$, that is, $\mathcal{Q}_k=\{2^{-k}(i+[0,1)^n):i\in \mathbb{Z}^n\}(k\in \mathbb{Z})$. The cubes in $\mathcal{D}=\bigcup_{k\in \mathbb{Z}}\mathcal{Q}_k$ are called dyadic cubes. Now, the proofs of Theorem \ref{Seth} and Theorem \ref{WCth} can be given.

\begin{proof}[Proof of Theorem \ref{Seth}]
From Lemma \ref{le3.2}, for any $f\in L^{\vec{p}}(\mathbb{R}^n)$ and $\varepsilon<\infty$, there exist $g\in C_c(\mathbb{R}^n)$ such that
$$\|f-g\|_{L^{\vec{p}}(\mathbb{R}^n)}<\varepsilon.$$
It is easy to know that $g$ is uniformly continuous. Hence, there exists a sequence of dyadic cube $\{Q_i\}_{i=1}^{N}$ and a sequence of rational number $\{c_i\}_{i=1}^{N}$ such that
$$\|g-\sum_{i=1}^{N}c_i\chi_{Q_i}\|_{L^{\vec{p}}(\mathbb{R}^n)}<\varepsilon.$$
We write $\Gamma$ as a set of simple functions $\varphi$ and
$$\varphi(x)=\sum_{i=1}^{N}c_i\chi_{Q_i},$$
where $\{Q_i\}_{i=1}^{N}$ is a sequence of dyadic cube and $\{c_i\}_{i=1}^{N}$ is a sequence of rational numbers. It is obvious that $\Gamma$ is countable and dense in $L^{\vec{p}}(\mathbb{R}^n)$. Thus, $L^{\vec{p}}(\mathbb{R}^n)$  is separable.
\end{proof}

\begin{proof}[Proof of Theorem \ref{WCth}]
According to \cite[Theorem 1.a]{7}, we know that the dual of $L^{\vec{p}}(\mathbb{R}^n)$ is $L^{\vec{p}\,'}(\mathbb{R}^n)$, where $\frac{1}{\vec{p}}+\frac{1}{\vec{p}\,'}=1$ and $1<\vec{p}<\infty$. Hence we only need to prove that there exists a subset $\{f_{k_j}\}_{j=1}^{\infty}$ such that for any $g\in L^{\vec{p}\,'}(\mathbb{R}^n)$,
$$\lim_{j\rightarrow\infty}\int_{\mathbb{R}^n}f_{k_j}(x)g(x)dx=\int_{\mathbb{R}^n}f(x)g(x)dx,$$
where $f\in L^{\vec{p}}(\mathbb{R}^n)$.

According to Theorem \ref{Seth}, we assume that $\{g_i\}_{i=1}^{\infty}$ is dense in $L^{\vec{p}\,'}(\mathbb{R}^n)$. Write
$$\mathcal{F}_k(g)=\int_{\mathbb{R}^n}f_{k}(x)g(x)dx.$$
Using H\"older's inequality,
$$|\mathcal{F}_k(g_i)|\le M\|g_i\|_{L^{\vec{p}\,'}(\mathbb{R}^n)}.$$
According to the boundedness of $\{\mathcal{F}_k(g_1)\}_{k=1}^{\infty}$, there exist convergent subsequences $\{\mathcal{F}_{k,1}(g_1)\}_{k=1}^{\infty}$. By the same argument, we can find a subsequence $\{\mathcal{F}_{k,2}(g_2)\}_{k=1}^{\infty}$ from $\{\mathcal{F}_{k,1}(g_2)\}_{k=1}^{\infty}$ such that $\{\mathcal{F}_{k,2}(g_2)\}_{k=1}^{\infty}$ is convergence. So for any $g_{i_0}( i_0\le j)$ there exists a convergent subsequences $\{\mathcal{F}_{k,j}(g_{i_0})\}$. By a diagonal process one can obtain that a subsequence $\{\mathcal{F}_{j,j}(g_{i})\}_{j=1}^{\infty}$ is convergence for any $g_i$ and
\begin{align*}
\mathcal{F}_{j,j}(g)&=\int_{\mathbb{R}^n}f_{k,j}(x)g(x)dx
:=\int_{\mathbb{R}^n}f_{k_j}(x)g(x)dx.
\end{align*}
For any $g\in L^{\vec{p}\,'}(\mathbb{R}^n)$ and any $\varepsilon>0$, there exists $g_i$ such that
$$\|g-g_i\|_{L^{\vec{p}\,'}(\mathbb{R}^n)}\le\varepsilon/2M.$$
Hence
\begin{align*}
|\mathcal{F}_{m,m}(g)-\mathcal{F}_{m',m'}(g)| &\le\int_{\mathbb{R}^n}|f_{k_m}(x)-f_{k_{m'}}(x)||g_i(x)|dx\\
&+\int_{\mathbb{R}^n}|f_{k_m}(x)-f_{k_{m'}}(x)||(g(x)-g_i(x)|dx\\
&\le \int_{\mathbb{R}^n}|f_{k_m}(x)-f_{k_{m'}}(x)||g_i(x)|dx+\varepsilon.
\end{align*}
When $m$ and $m'$ are large enough,
$$|\mathcal{F}_{m,m}(g)-\mathcal{F}_{m',m'}(g)|\le 2\varepsilon.$$
Thus $\{\mathcal{F}_{j,j}(g)\}_{j=1}^{\infty}$ is a Cauchy sequence for any $g\in L^{\vec{p}\,'}(\mathbb{R}^n)$. Then let
$$\mathcal{F}(g)=\lim_{j\rightarrow\infty}\mathcal{F}_{j,j}(g)$$
and $\mathcal{F}(g)$ be a linear bounded functional on $L^{\vec{p}\,'}(\mathbb{R}^n)$. Applying \cite[Theorem 1.a]{7}, we see that there exist $f\in L^{\vec{p}}(\mathbb{R}^n)$ such that
$$\int_{\mathbb{R}^n}f(x)g(x)dx=\mathcal{F}(g)=\lim_{j\rightarrow\infty}\mathcal{F}_{j,j}(g) =\lim_{j\rightarrow\infty}\int_{\mathbb{R}^n}f_{k_j}(x)g(x)dx.$$
The proof is completed.
\end{proof}

\section{The K\"othe dual spaces of mixed Morrey spaces}\label{sec4}
In this section, we study the Fatou property of the block spaces and prove that the block spaces are the K\"othe dual spaces of mixed Morrey spaces.
\begin{theorem}\label{Fath}
Let $1<p_0,\vec{p}<\infty$, $\frac{n}{p_0}<\sum_{i=1}^{n}\frac{1}{p_i}$. If $f_k\in \mathcal{B}_{\vec{p}\,'}^{p'_0}(\mathbb{R}^n)(k\in \mathbb{N})$ are nonnegative functions, $\|f_k\|_{\mathcal{B}_{\vec{p}\,'}^{p'_0}(\mathbb{R}^n)}\le 1$ and $f_k\uparrow f~a.e.$, then
$$\lim_{k\rightarrow\infty}\|f_k\|_{\mathcal{B}_{\vec{p}\,'}^{p'_0}(\mathbb{R}^n)}=\|f\|_{\mathcal{B}_{\vec{p}\,'}^{p'_0}(\mathbb{R}^n)}.$$
\end{theorem}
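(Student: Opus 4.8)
The plan is to prove the two inequalities $\lim_{k\to\infty}\|f_k\|_{\mathcal B_{\vec p\,'}^{p_0'}(\mathbb R^n)}\le\|f\|_{\mathcal B_{\vec p\,'}^{p_0'}(\mathbb R^n)}$ and $\|f\|_{\mathcal B_{\vec p\,'}^{p_0'}(\mathbb R^n)}\le\lim_{k\to\infty}\|f_k\|_{\mathcal B_{\vec p\,'}^{p_0'}(\mathbb R^n)}$ separately. Throughout I write $\beta:=\frac{1}{p_0}-\frac{1}{n}\sum_{i=1}^n\frac{1}{p_i}<0$, so that a measurable $b$ is a block exactly when $\mathrm{supp}\,b\subset Q$ and $\|b\|_{L^{\vec p\,'}(\mathbb R^n)}\le|Q|^{\beta}$ for some cube $Q$. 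The first (easy) inequality rests on the monotonicity of the block norm, namely that $0\le g\le h$ implies $\|g\|_{\mathcal B_{\vec p\,'}^{p_0'}(\mathbb R^n)}\le\|h\|_{\mathcal B_{\vec p\,'}^{p_0'}(\mathbb R^n)}$; this is not visible from the definition, so I would establish it first.

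For monotonicity, given $0\le g\le h$ and any admissible representation $h=\sum_i\lambda_i b_i$ with blocks $b_i$ supported on cubes $Q_i$, I would set $\theta:=g/h$ on $\{h>0\}$ and $\theta:=0$ on $\{h=0\}$, so that $0\le\theta\le1$ and $g=\theta h$. Then $g=\sum_i\lambda_i(\theta b_i)$, where each $\theta b_i$ is again a block, since its support stays inside $Q_i$ and $\|\theta b_i\|_{L^{\vec p\,'}(\mathbb R^n)}\le\|b_i\|_{L^{\vec p\,'}(\mathbb R^n)}\le|Q_i|^{\beta}$ by the lattice property of $L^{\vec p\,'}(\mathbb R^n)$. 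The partial sums are Cauchy in $\mathcal B_{\vec p\,'}^{p_0'}(\mathbb R^n)$ because their tails have norm at most $\sum_{i\ge M}|\lambda_i|\to0$, and the limit is identified with $\theta h=g$ through the local bound $\int_{Q_0}|\,\cdot\,|\,dx\le|Q_0|^{1/p_0}\|\,\cdot\,\|_{\mathcal B_{\vec p\,'}^{p_0'}(\mathbb R^n)}$, which follows from $\|\chi_{Q_0}\|_{\mathcal M_{\vec p}^{p_0}(\mathbb R^n)}=|Q_0|^{1/p_0}$ (Remark \ref{re2.9}) and the elementary H\"older inequality $\int|fg|\,dx\le\|f\|_{\mathcal B_{\vec p\,'}^{p_0'}(\mathbb R^n)}\|g\|_{\mathcal M_{\vec p}^{p_0}(\mathbb R^n)}$. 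Hence $\|g\|_{\mathcal B_{\vec p\,'}^{p_0'}(\mathbb R^n)}\le\sum_i|\lambda_i|$, and taking the infimum over representations of $h$ gives monotonicity. Applying it to $f_k\le f_{k+1}\le f$ shows that $\{\|f_k\|_{\mathcal B_{\vec p\,'}^{p_0'}(\mathbb R^n)}\}_k$ is nondecreasing and bounded above by $\|f\|_{\mathcal B_{\vec p\,'}^{p_0'}(\mathbb R^n)}$, so the limit $L:=\lim_k\|f_k\|_{\mathcal B_{\vec p\,'}^{p_0'}(\mathbb R^n)}$ exists and satisfies $L\le\|f\|_{\mathcal B_{\vec p\,'}^{p_0'}(\mathbb R^n)}$.

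The substance of the theorem is the reverse inequality $\|f\|_{\mathcal B_{\vec p\,'}^{p_0'}(\mathbb R^n)}\le L$, which in particular asserts $f\in\mathcal B_{\vec p\,'}^{p_0'}(\mathbb R^n)$. The monotone convergence theorem together with the same H\"older inequality gives, for every $g\in\mathcal M_{\vec p}^{p_0}(\mathbb R^n)$, the bound $\int f|g|\,dx=\lim_k\int f_k|g|\,dx\le L\,\|g\|_{\mathcal M_{\vec p}^{p_0}(\mathbb R^n)}$; this identifies the candidate value $L$ but does \emph{not} by itself exhibit a block decomposition of $f$, and passing from such a dual bound to membership in $\mathcal B_{\vec p\,'}^{p_0'}(\mathbb R^n)$ is precisely the hard half of the K\"othe duality proved later, so it cannot be invoked here without circularity. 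Instead I would build the decomposition directly: fix $\varepsilon>0$, choose for each $k$ a near-optimal representation $f_k=\sum_i\lambda_{k,i}b_{k,i}$ with $\sum_i|\lambda_{k,i}|\le L+\varepsilon$ and blocks $b_{k,i}$ on cubes $Q_{k,i}$, and then pass to the limit in $k$. Using the separability of $L^{\vec p\,'}(\mathbb R^n)$ (Theorem \ref{Seth}) and the weak sequential compactness of norm-bounded sequences in $L^{\vec p\,'}(\mathbb R^n)$ (the $\vec p\,'$-analogue of Theorem \ref{WCth}), I would extract, by a diagonal argument over a countable exhausting family of cubes, weak limits of the weighted blocks; as both the $L^{\vec p\,'}$-norm and the $\ell^1$-norm of the coefficients are weakly lower semicontinuous, the limiting objects are again blocks with coefficient sum at most $L+\varepsilon$, and the weak limit of $f_k$ is identified with $f$ via the monotone convergence above. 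Letting $\varepsilon\downarrow0$ then yields $\|f\|_{\mathcal B_{\vec p\,'}^{p_0'}(\mathbb R^n)}\le L$.

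The main obstacle is exactly this last construction, specifically the control of the geometry of the cubes $Q_{k,i}$ as $k\to\infty$. A priori these cubes may shrink to points, dilate without bound, or escape to spatial infinity, so that part of the $\ell^1$-mass carried by the decompositions can be lost in the weak limit, in which case the reassembled series either fails to reproduce $f$ or violates the sharp coefficient bound $L+\varepsilon$. Resolving this concentration/escape dichotomy—grouping the blocks by scale and location, discarding asymptotically negligible contributions, and certifying that the surviving weak limits reconstruct $f$ with no loss of mass—is the technical heart of the argument, and it is exactly where the separability and weak-compactness results of Section \ref{sec3} become indispensable.
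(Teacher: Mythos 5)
Your skeleton is the same as the paper's --- near-optimal block decompositions of the $f_k$, weak sequential compactness in $L^{\vec p\,'}$ (Theorem \ref{WCth}) plus a diagonal extraction, lower semicontinuity of the block condition and of the $\ell^1$ coefficient mass, identification of the limit with $f$ --- and your observation that arguing through the dual bound would be circular with Theorem \ref{KDth} is correct. But the proposal stops exactly where the proof has to begin: you correctly name the uncontrolled geometry of the cubes $Q_{k,i}$ as ``the technical heart'' and then offer only a program (``grouping the blocks by scale and location, discarding asymptotically negligible contributions'') without carrying it out, so the decisive step is missing. The paper's resolution is a regularization lemma proved first (Lemma \ref{le4.3}): every $f\in\mathcal{B}_{\vec{p}\,'}^{p'_0}(\mathbb{R}^n)$ admits a decomposition $f=\sum_{Q\in\mathcal{D}}\lambda_Q b_Q$ indexed by the countable family of \emph{dyadic} cubes, with $\mathrm{supp}\,b_Q\subset 3Q$, $\lambda_Q\ge 0$ and $\sum_{Q\in\mathcal{D}}\lambda_Q\le 2\cdot 3^n\|f\|_{\mathcal{B}_{\vec{p}\,'}^{p'_0}(\mathbb{R}^n)}$ (each original block is funneled into a dyadic cube $Q$ with $\mathrm{supp}\,b_i\subset 3Q$ and $|Q_i'|\ge|Q|$). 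This freezes the geometry once and for all: for each fixed $Q\in\mathcal{D}$ the sequence $\{b_{Q,k}\}_k$ lives in the fixed cube $3Q$, so the diagonal argument applies verbatim, and the identification $f_0=f$ is done by testing against $\chi_{Q_0}$ for dyadic $Q_0$ and splitting $\mathcal{D}$ into three zones: cubes with $|3Q|\le c_1$, whose total contribution is at most $\sum(\lambda_{Q,k_j}+\lambda_Q)\,|3Q|^{1/p_0}\lesssim c_1^{1/p_0}$; cubes with $|3Q|\ge c_2$, controlled by $|Q_0|^{\frac1n\sum_{i=1}^n\frac1{p_i}}\sum(\lambda_{Q,k_j}+\lambda_Q)\,|3Q|^{\beta}\lesssim c_2^{\beta}$, where it is precisely the strict hypothesis $\beta=\frac1{p_0}-\frac1n\sum_{i=1}^n\frac1{p_i}<0$ that makes this tail small; and the finitely many intermediate cubes, where the weak convergences (\ref{eq4.2})--(\ref{eq4.3}) finish the job. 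Without some such device, your worry about mass escaping to small scales, large scales, or spatial infinity is fatal, so as written the proof does not close.

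A secondary point: you over-constrain the limit passage by insisting that the near-optimal bound $L+\varepsilon$ survive the weak limit. It need not. Since Lemma \ref{Dth} (Nogayama's predual theorem, established independently of Theorem \ref{Fath}) already gives the norm formula $\|g\|_{\mathcal{B}_{\vec{p}\,'}^{p'_0}(\mathbb{R}^n)}=\sup\bigl\{\int_{\mathbb{R}^n}|g(x)h(x)|\,dx:\|h\|_{\mathcal{M}_{\vec{p}}^{p_0}(\mathbb{R}^n)}\le 1\bigr\}$, the qualitative conclusion $f\in\mathcal{B}_{\vec{p}\,'}^{p'_0}(\mathbb{R}^n)$ --- obtained in the paper with the lossy constant $2\cdot 3^n$ via Fatou on the coefficients --- combines with monotone convergence to yield the exact equality $\lim_k\|f_k\|_{\mathcal{B}_{\vec{p}\,'}^{p'_0}(\mathbb{R}^n)}=\|f\|_{\mathcal{B}_{\vec{p}\,'}^{p'_0}(\mathbb{R}^n)}$; for the same reason your hand-built $\theta=g/h$ monotonicity argument, while correct, is an unnecessary detour, as monotonicity is immediate from this duality formula. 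So the gap is not the functional-analytic machinery, which you have right, but the one concrete idea --- dyadic regularization of the supports together with the two-sided scale decay coming from $\beta<0$ --- that makes the compactness argument actually reassemble $f$.
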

\begin{theorem}\label{KDth}
Let $1<p_0,\vec{p}<\infty$, $\frac{n}{p_0}<\sum_{i=1}^{n}\frac{1}{p_i}$. $\mathcal{B}_{\vec{p}\,'}^{p'_0}(\mathbb{R}^n)$ is the K\"othe dual of $\mathcal{M}_{\vec{p}}^{p_0}(\mathbb{R}^n)$.
\end{theorem}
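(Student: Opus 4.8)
Throughout set $\mathbf{X}:=\mathcal{M}_{\vec{p}}^{p_0}(\R^n)$ and $\mathbf{Y}:=\mathcal{B}_{\vec{p}\,'}^{p'_0}(\R^n)$, and abbreviate $s:=\frac{1}{p_0}-\frac{1}{n}\sum_{i=1}^{n}\frac{1}{p_i}$, so that the defining conditions read $\|f\|_{\mathbf{X}}=\sup_{Q}|Q|^{s}\|f\chi_Q\|_{L^{\vec{p}}(\R^n)}$ and a $(p'_0,\vec{p}\,')$-block $b$ with $\mathrm{supp}\,b\subset Q$ satisfies $\|b\|_{L^{\vec{p}\,'}(\R^n)}\le|Q|^{s}$. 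The target is $\mathbf{X}'=\mathbf{Y}$ with equivalent norms. The plan is to first identify the K\"othe dual of the block space, proving $\mathbf{Y}'=\mathbf{X}$, and then to exploit the Fatou property of $\mathbf{Y}$ (Theorem \ref{Fath}) in order to pass to the second K\"othe dual, which yields $\mathbf{X}'=(\mathbf{Y}')'=\mathbf{Y}''=\mathbf{Y}$.

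The identity $\mathbf{Y}'=\mathbf{X}$ is obtained by two matching estimates. For $\|f\|_{\mathbf{Y}'}\le\|f\|_{\mathbf{X}}$, take $g\in\mathbf{Y}$ with $\|g\|_{\mathbf{Y}}=1$ and any block decomposition $g=\sum_i\lambda_i b_i$ with $\mathrm{supp}\,b_i\subset Q_i$; the mixed-norm H\"older inequality of Benedek and Panzone gives $\int_{\R^n}|f b_i|\le\|f\chi_{Q_i}\|_{L^{\vec{p}}(\R^n)}\,\|b_i\|_{L^{\vec{p}\,'}(\R^n)}\le|Q_i|^{s}\|f\chi_{Q_i}\|_{L^{\vec{p}}(\R^n)}\le\|f\|_{\mathbf{X}}$, whence $\int_{\R^n}|fg|\le\|f\|_{\mathbf{X}}\sum_i|\lambda_i|$; taking the infimum over decompositions and the supremum over $g$ proves the bound. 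For the reverse inequality $\|f\|_{\mathbf{X}}\le\|f\|_{\mathbf{Y}'}$, fix a cube $Q$ and use that $L^{\vec{p}}(\R^n)$ is a ball Banach function space with associate space $L^{\vec{p}\,'}(\R^n)$ to write $\|f\chi_Q\|_{L^{\vec{p}}(\R^n)}=\sup\{\int_Q|f|h:\,h\ge0,\ \mathrm{supp}\,h\subset Q,\ \|h\|_{L^{\vec{p}\,'}(\R^n)}\le1\}$. For each admissible $h$ the function $b:=|Q|^{s}h$ is a $(p'_0,\vec{p}\,')$-block, so $\|b\|_{\mathbf{Y}}\le1$ and hence $|Q|^{s}\int_Q|f|h=\int_{\R^n}|fb|\le\|f\|_{\mathbf{Y}'}$; taking the supremum over $h$ and then over $Q$ gives the claim.

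With $\mathbf{Y}'=\mathbf{X}$ in hand (with equal, or at worst equivalent, norms), a further passage to K\"othe duals gives $\mathbf{X}'=(\mathbf{Y}')'=\mathbf{Y}''$. Now $\mathbf{Y}$ is a ball Banach function space: the lattice property, the finiteness $\chi_Q\in\mathbf{Y}$ for $|Q|<\infty$, and the local integrability estimate of Definition \ref{def2.2}(iv) are routine, while the decisive Fatou property of Definition \ref{def2.2}(ii) is precisely Theorem \ref{Fath}. Therefore the second-associate-space identity recorded in Remark \ref{re2.4}(iii) (namely \cite[Lemma 2.6]{34}) applies and yields $\mathbf{Y}''=\mathbf{Y}$. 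Combining the two displays gives $\mathbf{X}'=\mathbf{Y}$, as asserted.

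The heart of the argument, and its main difficulty, is the lower bound in $\mathbf{Y}'=\mathbf{X}$: it relies on representing the mixed-norm Lebesgue norm $\|f\chi_Q\|_{L^{\vec{p}}(\R^n)}$ by duality using test functions supported inside $Q$, and on verifying that the resulting admissible function, once rescaled by $|Q|^{s}$, is a genuine block and therefore legitimate in the supremum defining $\|\cdot\|_{\mathbf{Y}'}$. A second point demanding care is the confirmation that $\mathbf{Y}$ satisfies all the axioms of a ball Banach function space, so that the abstract biduality $\mathbf{Y}''=\mathbf{Y}$ is available; the Fatou property provided by Theorem \ref{Fath} is the essential ingredient underpinning this step.
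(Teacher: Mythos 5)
Your proposal is correct, but it handles the two halves of the theorem differently from the paper, and the comparison is instructive. For the ``easy'' half you re-derive by hand the identity $\big(\mathcal{B}_{\vec{p}\,'}^{p'_0}(\mathbb{R}^n)\big)'=\mathcal{M}_{\vec{p}}^{p_0}(\mathbb{R}^n)$ (mixed H\"older against a block decomposition for one inequality; Benedek--Panzone duality inside a fixed cube, with the rescaled test function $b=|Q|^{s}h$ recognized as a block, for the other) --- this is precisely the content of the first display of Lemma \ref{Dth}, which the paper simply cites from Nogayama, so your argument is sound and self-contained where the paper leans on a reference. The genuine divergence is in the hard inclusion $\big(\mathcal{M}_{\vec{p}}^{p_0}(\mathbb{R}^n)\big)'\subset\mathcal{B}_{\vec{p}\,'}^{p'_0}(\mathbb{R}^n)$: the paper argues concretely, truncating $f_k=\min\{f/M,k/M\}\chi_{Q_k}$, checking each $f_k$ is a multiple of a single block, bounding $\|f_k\|_{\mathcal{B}_{\vec{p}\,'}^{p'_0}(\mathbb{R}^n)}\le 1$ via the second display of Lemma \ref{Dth}, and applying Theorem \ref{Fath} to $f_k\uparrow f/M$; you instead go through the second associate space, $\mathbf{X}'=\mathbf{Y}''=\mathbf{Y}$, invoking the Lorentz--Luxemburg-type biduality of Remark \ref{re2.4}(iii) (i.e.\ \cite[Lemma 2.6]{34}) once $\mathbf{Y}$ is verified to be a ball Banach function space. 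Your route is more structural and even recovers the second identity of Lemma \ref{Dth} as a byproduct, but three of your ``routine'' flags are doing real work and deserve explicit mention: (a) you need \cite[Lemma 2.6]{34} in its set-theoretic form, valid for all measurable $f$ with infinite norms allowed, since it is the inclusion $\mathbf{Y}''\subset\mathbf{Y}$ you actually use (note its proof is essentially the same truncation-plus-Fatou mechanism the paper runs concretely); (b) Theorem \ref{Fath} is stated only for sequences with $\|f_k\|_{\mathcal{B}_{\vec{p}\,'}^{p'_0}(\mathbb{R}^n)}\le 1$, so upgrading it to axiom (\romannumeral2) of Definition \ref{def2.2} requires the lattice property of the block space (factor $g=\theta f$ with $|\theta|\le 1$ and absorb $\theta$ into the blocks) together with a rescaling and a divergence dichotomy; and (c) completeness of $\mathcal{B}_{\vec{p}\,'}^{p'_0}(\mathbb{R}^n)$, required by the definition of a ball Banach function space, is Nogayama's theorem rather than something immediate. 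With those points filled in, your proof is fully valid; the paper's version is more elementary and self-contained, while yours trades the explicit truncation for the abstract biduality machinery.
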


In fact, a abstract result can be found in \cite[Theorem 2.2]{MST2016} for Theorem \ref{Fath}. To state our proofs better, we prove Theorem \ref{Fath} by the method from \cite{18} instead of proving the conditions in \cite[Theorem 2.2]{MST2016}. We need Lemma \ref{le4.3} and Lemma \ref{Dth} (see \cite[Theorem 2.7]{N2019(2)}) before our proof.
\begin{lemma}\label{le4.3}
Let $1<p_0,\vec{p}<\infty$, $\frac{n}{p_0}<\sum_{i=1}^{n}\frac{1}{p_i}$ and $f\in \mathcal{B}_{\vec{p}\,'}^{p'_0}(\mathbb{R}^n)$. Then $f$ can be decomposed as
$$f(x)=\sum_{Q\in\mathcal{D}}\lambda_{Q}b_{Q}(x),$$
where $\lambda_{Q}$ is a nonnegative number with
$$\sum_{Q\in\mathcal{D}}\lambda_{Q}\le 2\|f\|_{\mathcal{B}_{\vec{p}\,'}^{p'_0}(\mathbb{R}^n)}\cdot 3^n,$$
and $b_{Q}$ is a $(p'_0,\vec{p}\,')$-block with $\mathrm{supp}\,b_{Q}\subset 3Q$.
\end{lemma}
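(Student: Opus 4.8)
The plan is to begin with an arbitrary block representation of $f$, replace the cubes carrying the blocks by $3$-dilates of dyadic cubes via a standard covering argument, and then merge the pieces attached to a common dyadic cube into a single block. Throughout write $\sigma:=\frac{1}{p_0}-\frac{1}{n}\sum_{i=1}^{n}\frac{1}{p_i}$, which is negative by the hypothesis $\frac{n}{p_0}<\sum_{i=1}^{n}\frac{1}{p_i}$; thus a $(p'_0,\vec{p}\,')$-block on a cube $S$ is a function $b$ with $\mathrm{supp}\,b\subset S$ and $\|b\|_{L^{\vec{p}\,'}(\mathbb{R}^n)}\le|S|^{\sigma}$. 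Since $f=0$ is trivial, I may use that $\|f\|_{\mathcal{B}_{\vec{p}\,'}^{p'_0}(\mathbb{R}^n)}$ is an infimum to fix a representation $f=\sum_{i}\mu_i c_i$ with $\mu_i\ge 0$ (absorbing signs into the blocks, which are closed under multiplication by $\pm 1$), each $c_i$ a block supported in a cube $Q_i$, and $\sum_i\mu_i\le 2\|f\|_{\mathcal{B}_{\vec{p}\,'}^{p'_0}(\mathbb{R}^n)}$.

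The next ingredient is the elementary covering fact: for every cube $Q$ there is a dyadic cube $R=R(Q)\in\mathcal{D}$ with $Q\subset 3R$ and $|R|\le|Q|<2^n|R|$. Indeed, choosing $k\in\mathbb{Z}$ with $2^{-k}\le\ell(Q)<2^{-k+1}$ and letting $R\in\mathcal{Q}_k$ be the dyadic cube containing the center of $Q$, an $\ell^{\infty}$-distance estimate gives $|x-c_R|_{\infty}<\frac{3}{2}2^{-k}$ for every $x\in Q$, i.e. $Q\subset 3R$. Applying this to each $Q_i$ yields dyadic cubes $R_i$ with $Q_i\subset 3R_i$ and $|Q_i|\le|3R_i|\le 3^n|Q_i|$. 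As $\sigma<0$, I then renormalize each $c_i$ to a genuine block on $3R_i$ by setting $b_i':=|3R_i|^{\sigma}\,\|c_i\|_{L^{\vec{p}\,'}(\mathbb{R}^n)}^{-1}\,c_i$ (and $b_i':=0$ when $c_i=0$), so that $\mathrm{supp}\,b_i'\subset 3R_i$, $\|b_i'\|_{L^{\vec{p}\,'}(\mathbb{R}^n)}=|3R_i|^{\sigma}$, and $\mu_i c_i=\nu_i b_i'$ with $\nu_i:=\mu_i\,\|c_i\|_{L^{\vec{p}\,'}(\mathbb{R}^n)}\,|3R_i|^{-\sigma}$. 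Using $\|c_i\|_{L^{\vec{p}\,'}(\mathbb{R}^n)}\le|Q_i|^{\sigma}$ and $|Q_i|/|3R_i|\ge 3^{-n}$ together with $\sigma<0$, I get $\nu_i\le\mu_i(|Q_i|/|3R_i|)^{\sigma}\le 3^{-n\sigma}\mu_i<3^n\mu_i$, the last inequality because $-n\sigma=\sum_{i=1}^{n}\frac{1}{p_i}-\frac{n}{p_0}<\sum_{i=1}^{n}\frac{1}{p_i}<n$ thanks to $p_i>1$ for every $i$.

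Finally I would regroup by dyadic cube. For $Q\in\mathcal{D}$ set $\lambda_Q:=\sum_{i:\,R_i=Q}\nu_i$ and, when $\lambda_Q>0$, $b_Q:=\lambda_Q^{-1}\sum_{i:\,R_i=Q}\nu_i b_i'$ (otherwise $b_Q:=0$). Each $b_Q$ is a convex combination of blocks supported in $3Q$, so by the triangle inequality for $\|\cdot\|_{L^{\vec{p}\,'}(\mathbb{R}^n)}$ (a genuine norm since $1<\vec{p}\,'<\infty$) it is again a $(p'_0,\vec{p}\,')$-block with $\mathrm{supp}\,b_Q\subset 3Q$. The absolute summability $\sum_i\nu_i<\infty$ legitimizes the regrouping $f=\sum_i\mu_i c_i=\sum_i\nu_i b_i'=\sum_{Q\in\mathcal{D}}\lambda_Q b_Q$, and $\sum_{Q\in\mathcal{D}}\lambda_Q=\sum_i\nu_i<3^n\sum_i\mu_i\le 2\cdot 3^n\,\|f\|_{\mathcal{B}_{\vec{p}\,'}^{p'_0}(\mathbb{R}^n)}$, which is the claimed estimate. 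The step demanding the most care is this coefficient bound: renormalizing a block on $Q_i$ into a block on the strictly larger cube $3R_i$ inflates its coefficient by $3^{-n\sigma}$, and the clean constant $2\cdot 3^n$ survives only because $\sum_{i=1}^{n}\frac{1}{p_i}<n$ forces $3^{-n\sigma}<3^n$; checking the covering inclusion $Q\subset 3R$ is the remaining, more routine, technical point.
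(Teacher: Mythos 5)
Your proof is correct and follows essentially the same route as the paper's: both start from a near-optimal block representation $f=\sum_i\mu_i c_i$ with $\sum_i\mu_i\le 2\|f\|_{\mathcal{B}_{\vec{p}\,'}^{p'_0}(\mathbb{R}^n)}$, attach to each supporting cube $Q_i$ a dyadic cube $R_i$ with $Q_i\subset 3R_i$ and $|R_i|\le|Q_i|$, merge the pieces over each dyadic cube into a single block supported in $3Q$, and absorb the dilation loss via $3^{-n\sigma}\le 3^n$, which is exactly the paper's condition $\sum_{i=1}^{n}\frac{1}{p_i}-\frac{n}{p_0}<n$. The only differences are presentational: you renormalize each block to exact norm $|3R_i|^{\sigma}$ and then take a convex combination (and you verify the covering inclusion $Q\subset 3R$ explicitly, which the paper merely asserts when partitioning $\mathbb{N}$ into the sets $K(Q)$), whereas the paper keeps the original blocks and builds the factor $3^n$ directly into the normalizing coefficient $\lambda_Q$.
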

\begin{lemma}\label{Dth}
Let $1<p_0,\vec{p}<\infty$ and $\frac{n}{p_0}<\sum_{i=1}^{n}\frac{1}{p_i}$. The block space $\mathcal{B}_{\vec{p}\,'}^{p'_0}(\mathbb{R}^n)$ is the predual spaces of mixed Morrey spaces. Furthermore, for any $f\in\mathcal{M}_{\vec{p}}^{p_0}(\mathbb{R}^n)$ and $g\in\mathcal{B}_{\vec{p}\,'}^{p'_0}(\mathbb{R}^n)$,
$$\|f\|_{\mathcal{M}_{\vec{p}}^{p_0}(\mathbb{R}^n)}=\sup\left\{\int_{\mathbb{R}^n}\left|f(x)h(x)\right|dx:h\in \mathcal{B}_{\vec{p}\,'}^{p'_0}(\mathbb{R}^n),\|h\|_{\mathcal{B}_{\vec{p}\,'}^{p'_0}(\mathbb{R}^n)}=1\right\}$$
and
$$\|g\|_{\mathcal{B}_{\vec{p}\,'}^{p'_0}(\mathbb{R}^n)}=\max\left\{\int_{\mathbb{R}^n}\left|h(x)g(x)\right|dx:h\in \mathcal{M}_{\vec{p}}^{p_0}(\mathbb{R}^n),\|h\|_{\mathcal{B}_{\vec{p}}^{p_0}(\mathbb{R}^n)}=1\right\}.$$
\end{lemma}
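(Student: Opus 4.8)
The plan is to prove directly that $\mathcal{M}_{\vec{p}}^{p_0}(\mathbb{R}^n)$ is the dual of $\mathcal{B}_{\vec{p}\,'}^{p'_0}(\mathbb{R}^n)$ and to read both displayed identities off this duality, using as the only analytic input the Benedek--Panzone facts $(L^{\vec{p}})^*=L^{\vec{p}\,'}$, $(L^{\vec{p}\,'})^*=L^{\vec{p}}$ and the sharp mixed-norm H\"older inequality from \cite[Theorem 1.a]{7}. Write $\sigma:=\frac{1}{p_0}-\frac{1}{n}\sum_{i=1}^{n}\frac{1}{p_i}$, the common scaling exponent of the Morrey norm and the block normalization. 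First I would establish the easy pairing bound: for $f\in\mathcal{M}_{\vec{p}}^{p_0}$ and a single $(p_0',\vec{p}\,')$-block $b$ supported on a cube $Q$, mixed-norm H\"older gives $\int_{\mathbb{R}^n}|fb|\le\|f\chi_Q\|_{L^{\vec{p}}}\|b\|_{L^{\vec{p}\,'}}\le|Q|^{\sigma}\|f\chi_Q\|_{L^{\vec{p}}}\le\|f\|_{\mathcal{M}_{\vec{p}}^{p_0}}$. Summing over an admissible decomposition $h=\sum_i\lambda_ib_i$ and taking the infimum over all such decompositions yields $\int_{\mathbb{R}^n}|fh|\le\|f\|_{\mathcal{M}_{\vec{p}}^{p_0}}\|h\|_{\mathcal{B}_{\vec{p}\,'}^{p'_0}}$ for every $h\in\mathcal{B}_{\vec{p}\,'}^{p'_0}$. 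This shows each $f$ induces a functional on the block space of norm at most $\|f\|_{\mathcal{M}}$, and it supplies the ``$\ge$'' half of both identities at once.

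Next I would prove the reverse inequality for the Morrey-side formula. Fix $f$ and $\varepsilon>0$ and choose a cube $Q$ with $|Q|^{\sigma}\|f\chi_Q\|_{L^{\vec{p}}}>\|f\|_{\mathcal{M}}-\varepsilon$. By the sharp form of mixed-norm duality there is $\phi\in L^{\vec{p}\,'}$ with $\mathrm{supp}\,\phi\subset Q$ and $\|\phi\|_{L^{\vec{p}\,'}}\le 1$ for which $\int_{Q}|f|\phi>\|f\chi_Q\|_{L^{\vec{p}}}-\varepsilon$. Then $b:=|Q|^{\sigma}\phi$ is a $(p_0',\vec{p}\,')$-block with $\|b\|_{\mathcal{B}}\le 1$ and $\int_{\mathbb{R}^n}|f|\,b>\|f\|_{\mathcal{M}}-C\varepsilon$. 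Taking the supremum over normalized blocks recovers $\|f\|_{\mathcal{M}}$, and the sign manipulation recorded in Remark \ref{re2.4}(iv) turns $\int|fh|$ into $\int fh$, giving the first displayed identity.

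The main step is surjectivity of the representation, i.e. that the block space is genuinely a \emph{predual}. Given $\Lambda\in(\mathcal{B}_{\vec{p}\,'}^{p'_0})^*$, fix an exhausting sequence of cubes $Q_1\subset Q_2\subset\cdots$ with $\bigcup_k Q_k=\mathbb{R}^n$. Any $\phi\in L^{\vec{p}\,'}$ supported in $Q_k$ becomes, after scaling by $|Q_k|^{\sigma}/\|\phi\|_{L^{\vec{p}\,'}}$, a block; hence $\Lambda$ restricts to a bounded functional on $L^{\vec{p}\,'}(Q_k)$ and, by Benedek--Panzone duality, is represented by some $f_k\in L^{\vec{p}}(Q_k)$. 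For $\phi$ supported in $Q_k\subset Q_{k+1}$ one has $\int(f_k-f_{k+1})\phi=0$, so $f_{k+1}|_{Q_k}=f_k$ a.e. and the $f_k$ glue to a measurable $f$ on $\mathbb{R}^n$. Thus $\Lambda(h)=\int fh$ for every $h$ supported in some $Q_k$; for general $h=\sum_i\lambda_ib_i\in\mathcal{B}$ the partial sums $h_N$ are each supported in some cube, $\|h-h_N\|_{\mathcal{B}}\le\sum_{i>N}|\lambda_i|\to 0$, and the pairing bound of the first step forces $\int fh_N\to\int fh$, so $\Lambda(h)=\int fh$ on all of $\mathcal{B}$. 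Finally, testing $\Lambda$ against blocks on an arbitrary cube $Q$ and invoking $L^{\vec{p}}(Q)$-duality gives $|Q|^{\sigma}\|f\chi_Q\|_{L^{\vec{p}}}\le\|\Lambda\|$, whence $f\in\mathcal{M}_{\vec{p}}^{p_0}$ with $\|f\|_{\mathcal{M}}\le\|\Lambda\|$; together with the first step this proves $(\mathcal{B}_{\vec{p}\,'}^{p'_0})^*=\mathcal{M}_{\vec{p}}^{p_0}$ isometrically.

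The second displayed identity (with its stated $\max$, and reading the exponent on $h$ as the Morrey one) now follows formally from this duality. The canonical embedding $\mathcal{B}\hookrightarrow\mathcal{B}^{**}=(\mathcal{M}_{\vec{p}}^{p_0})^*$ is isometric, so $\|g\|_{\mathcal{B}}=\sup\{\int gh:\|h\|_{\mathcal{M}}\le1\}$, and replacing $h$ by $\mathrm{sgn}(g)\,|h|$, which leaves the Morrey norm unchanged, converts $\int gh$ into $\int|gh|$. Attainment comes from Banach--Alaoglu: the unit ball of $\mathcal{M}_{\vec{p}}^{p_0}=\mathcal{B}^*$ is weak-$*$ compact and $h\mapsto\int gh$ is, by construction, weak-$*$ continuous, so the supremum is a maximum. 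I expect the surjectivity and gluing in the third paragraph to be the principal obstacle: verifying consistency of the local $L^{\vec{p}}$-representatives and, above all, passing from test functions with bounded support to arbitrary $h\in\mathcal{B}$ must be done carefully, the latter relying on the $\ell^1$-structure of block decompositions rather than any pointwise limit; the Fatou property of Theorem \ref{Fath} can be kept in reserve should one prefer a monotone-limit argument in place of the $\ell^1$-tail estimate.
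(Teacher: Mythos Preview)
The paper does not prove this lemma: it is quoted verbatim as \cite[Theorem 2.7]{N2019(2)} and used as a black box. So there is no ``paper's own proof'' to compare against, and your outline is in fact \emph{more} than what the authors supply.

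Your argument is essentially the standard one and is correct, with one ordering caveat in the surjectivity step. You write that for general $h=\sum_i\lambda_ib_i$ the partial sums $h_N$ are compactly supported and then invoke ``the pairing bound of the first step'' to conclude $\int fh_N\to\int fh$. But that pairing bound reads $\bigl|\int f(h-h_N)\bigr|\le\|f\|_{\mathcal{M}_{\vec{p}}^{p_0}}\|h-h_N\|_{\mathcal{B}_{\vec{p}\,'}^{p'_0}}$ and therefore presupposes $f\in\mathcal{M}_{\vec{p}}^{p_0}$, which you only establish in the sentence \emph{after}. The fix is simply to swap the order: once the local representatives $f_k$ have been glued to $f$, test $\Lambda$ against the block $|Q|^{\sigma}\phi$ for arbitrary $Q$ and $\phi\in L^{\vec{p}\,'}(Q)$ with $\|\phi\|_{L^{\vec{p}\,'}}\le1$ (these are compactly supported, so the representation $\Lambda(\cdot)=\int f\cdot$ already applies) to obtain $|Q|^{\sigma}\|f\chi_Q\|_{L^{\vec{p}}}\le\|\Lambda\|$ and hence $f\in\mathcal{M}_{\vec{p}}^{p_0}$; only then pass to the limit in $\int fh_N$. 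With that reordering the surjectivity paragraph is clean.

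Two smaller remarks. First, you correctly spotted the typo in the second displayed identity (the condition on $h$ should read $\|h\|_{\mathcal{M}_{\vec{p}}^{p_0}}=1$, not $\mathcal{B}_{\vec{p}}^{p_0}$). Second, your Banach--Alaoglu justification of the $\max$ is exactly right: since $g\in\mathcal{B}_{\vec{p}\,'}^{p'_0}$ and $\mathcal{M}_{\vec{p}}^{p_0}=(\mathcal{B}_{\vec{p}\,'}^{p'_0})^*$, the map $h\mapsto\int gh$ is by definition weak-$*$ continuous on the weak-$*$ compact unit ball, so the supremum is attained; the passage from $\int gh$ to $\int|gh|$ via $h\mapsto\mathrm{sgn}(g)\,|h|$ is legitimate because the Morrey norm depends only on $|h|$.
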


Now, let us prove Lemma \ref{le4.3}, Theorem \ref{Fath} and Theorem \ref{KDth}.
\begin{proof}[Proof of Lemma \ref{le4.3}]
For $f\in \mathcal{B}_{\vec{p}\,'}^{p'_0}(\mathbb{R}^n)$, there exist $\{\lambda_i\}_{i=1}^{\infty}$ and $\{b_i\}_{i=1}^{\infty}$ such that
$$f(x)=\sum_{i=1}^{\infty}\lambda_ib_i(x),$$
where $\sum_{i=1}^{\infty}|\lambda_i|\le 2\|f\|_{\mathcal{B}_{\vec{p}\,'}^{p'_0}(\mathbb{R}^n)}$ and $b_i$ is a $(p'_0,\vec{p}\,')$-block with $\text{supp}~b_i\subset Q'_i$. We divide $\mathbb{N}$ into the disjoint sets $K(Q)$, $Q\in\mathcal{D}$, as
$$\mathbb{N}=\bigcup_{Q\in \mathcal{D}}K(Q),$$
and if $i\in K(Q)$ then
$$\text{supp}~b_i\subset 3Q~\text{and }|Q'_i|\ge |Q|.$$
Then
\begin{align*}
f(x)&=\sum_{i=1}^{\infty}\lambda_ib_i(x)=\sum_{Q\in \mathcal{D}}\left(\sum_{i\in K(Q)}\lambda_ib_i(x)\right)\\
&=\sum_{Q\in \mathcal{D}}\left[3^n\sum_{i\in K(Q)}|\lambda_i|\right]\cdot\left[\left(3^n\sum_{i\in K(Q)}|\lambda_i|\right)^{-1}\sum_{i\in K(Q)}\lambda_ib_i(x)\right]\\
&=:\sum_{Q\in \mathcal{D}}\lambda_{Q}b_Q(x).
\end{align*}
It is easy to prove that
\begin{align*}
\sum_{Q\in \mathcal{D}}\lambda_{Q}=3^n\sum_{Q\in \mathcal{D}}\sum_{i\in K(Q)}|\lambda_i|
\le 3^n\sum_{i=1}^{\infty}|\lambda_i|\le  2\|f\|_{\mathcal{B}_{\vec{p}\,'}^{p'_0}(\mathbb{R}^n)}\cdot 3^n,
\end{align*}
and
\begin{align*}
\|b_Q(x)\|_{L^{\vec{p}\,'}(\mathbb{R}^n)} &=\left(3^n\sum_{i\in K(Q)}|\lambda_i|\right)^{-1}\|\sum_{i\in K(Q)}\lambda_ib_i(x)\|_{L^{\vec{p}\,'}(\mathbb{R}^n)}\\
&\le\left(3^n\sum_{i\in K(Q)}|\lambda_i|\right)^{-1}\sum_{i\in K(Q)}|\lambda_i|\|b_i(x)\|_{L^{\vec{p}\,'}(\mathbb{R}^n)}\\
&\le\left(3^n\sum_{i\in K(Q)}|\lambda_i|\right)^{-1}\sum_{i\in K(Q)}|\lambda_i||Q'_i|^{\frac{1}{p_0}-\frac{1}{n}\sum_{i=1}^{n}\frac{1}{p_i}}\\
&\le |Q|^{\frac{1}{p_0}-\frac{1}{n}\sum_{i=1}^{n}\frac{1}{p_i}}\left(3^n\sum_{i\in K(Q)}|\lambda_i|\right)^{-1}\sum_{i\in K(Q)}|\lambda_i|\\
&= |3Q|^{\frac{1}{p_0}-\frac{1}{n}\sum_{i=1}^{n}\frac{1}{p_i}}\cdot 3^{n(\frac{1}{n}\sum_{i=1}^{n}\frac{1}{p_i}-\frac{1}{p_0}-1)}\\
&\le |3Q|^{\frac{1}{p_0}-\frac{1}{n}\sum_{i=1}^{n}\frac{1}{p_i}}.
\end{align*}
Hence $b_Q$ is a $(p'_0,\vec{p}\,')$-block with $\text{supp}~b_{Q}\subset 3Q$. The proof is completed.
\end{proof}
\begin{proof}[Proof of Theorem \ref{Fath}]
For any nonnegative function $f_k$, Lemma \ref{le4.3} yields
$$f_k(x)=\sum_{}\lambda_{Q,k}b_{Q,k}(x),$$
where $\lambda_{Q,k}$ is nonnegative number with
\begin{equation}\label{eq4.1}
\sum_{Q\in \mathcal{D}}\lambda_{Q,k}\le 2\cdot 3^n,
\end{equation}
and $b_{Q,k}$ is a $(p_0,\vec{p})$-block with
$$\text{supp}~b_{Q,k}\subset 3Q,~\|b_{Q,k}\|_{L^{\vec{p}\,'}(\mathbb{R}^n)}\le |3Q|^{\frac{1}{p_0}-\frac{1}{n}\sum_{i=1}^{n}\frac{1}{p_i}}.$$
In view of Theorem \ref{WCth} and diagonalization argument, for any $Q\in\mathcal{D}$, one can obtain that there exist subsequences $\{\lambda_{Q,k_j}\}_{j=1}^{\infty}$ and $\{b_{Q,k_j}\}_{j=1}^{\infty}$ such that
\begin{equation}\label{eq4.2}
\lim_{j\rightarrow \infty}\int_{\mathbb{R}^n}b_{Q,k_j}(x)g(x)dx=\int_{\mathbb{R}^n}b_{Q}(x)g(x)dx,\text{ for any }g\in L^{\vec{p}}(3Q),
\end{equation}
\begin{equation}\label{eq4.3}
\lim_{j\rightarrow \infty}\lambda_{Q,k_j}=\lambda_Q,
\end{equation}
\begin{equation}\label{eq4.4}
f_{k_j}(x)=\sum_{Q\in\mathcal{D}}\lambda_{Q,k_j}b_{Q,k_j}(x).
\end{equation}
It is obvious that $\text{supp}~b_{Q}\subset 3Q.$ Furthermore, by \cite[Theorem 1]{7}, there exist $g\in L^{\vec{p}}(3Q)$ and $\|g\|_{L^{\vec{p}}(3Q)}\le 1$ such that
$$\|b_{Q}(x)\|_{L^{\vec{p}\,'}(\mathbb{R}^n)}=\int_{\mathbb{R}^n}|b_Q(x)g(x)|dx.$$
Hence
\begin{align*}
\|b_{Q}\|_{L^{\vec{p}\,'}(\mathbb{R}^n)}&=\int_{\mathbb{R}^n}|b_Q(x)g(x)|dx\\
&\le \liminf_{j\rightarrow\infty}\int_{\mathbb{R}^n}|b_{Q,k_j}(x)g(x)|dx\\
&\le\|g\|_{L^{\vec{p}}(3Q)}\liminf_{j\rightarrow\infty}\|b_{Q,k_j}\|_{L^{\vec{p}\,'}(3Q)}\\
&\le |3Q|^{\frac{1}{p_0}-\frac{1}{n}\sum_{i=1}^{n}\frac{1}{p_i}}.
\end{align*}
Thus $b_{Q}$ is a $(p'_0,\vec{p}\,')$-block. Moreover, we conclude that from the Fatou theorem,
\begin{equation}\label{eq4.5}
\sum_{Q\in \mathcal{D}}\lambda_{Q}\le \liminf_{j\rightarrow\infty}\sum_{Q\in \mathcal{D}}\lambda_{Q,k_j}\le 2\cdot 3^n.
\end{equation}
Then,
$$f_0(x):=\sum_{Q\in \mathcal{D}}\lambda_{Q}b_{Q}(x)\in\mathcal{B}_{\vec{p}\,'}^{p'_0}(\mathbb{R}^n).$$

Next, let us prove that $f(x)=f_0(x)~a.e.$. By the Lebesgue differential theorem, we only need to prove that
$$\lim_{j\rightarrow\infty}\int_{Q_0}f_{k_j}(x)dx=\int_{Q_0}f(x)dx=\int_{Q_0}f_0(x)dx$$
holds for any $Q_0\in\mathcal{D}$. Without loss of generality, let $|Q_0|\le 1$ and $0<\varepsilon<1$. Let
\begin{displaymath}
\left\{ \begin{array}{ll}
\mathcal{D}_1(Q_0):=\{Q\in\mathcal{D}:3Q\cap Q_0\neq\emptyset, |3Q|\le c_1\},\\
\mathcal{D}_2(Q_0):=\{Q\in\mathcal{D}:3Q\cap Q_0\neq\emptyset, |3Q|\in(c_1,c_2)\},\\
\mathcal{D}_3(Q_0):=\{Q\in\mathcal{D}:3Q\cap Q_0\neq\emptyset, |3Q|\ge c_2\},
\end{array} \right.
\end{displaymath}
where
$$c_1^{\frac{1}{p_0}}=\frac{\varepsilon}{12\cdot 3^n},~~
c_2^{\frac{1}{p_0}-\frac{1}{n}\sum_{i=1}^{\infty}\frac{1}{p_i}}=\frac{\varepsilon}{12\cdot 3^n}.$$
For the set $\mathcal{D}_1(Q_0)$, by (\ref{eq4.1}), (\ref{eq4.5}) and H\"older's inequality
\begin{align*}
&\quad\sum_{Q\in\mathcal{D}_1(Q_0)}\int_{Q_0}|\lambda_{Q,k_j}b_{Q,k_j}(x)-\lambda_{Q}b_{Q}(x)|dx \\ &\le\sum_{Q\in\mathcal{D}_1(Q_0)}\left(\lambda_{Q,k_j}\int_{3Q}|b_{Q,k_j}(x)dx+\lambda_{Q}\int_{3Q}|b_{Q}(x)|dx\right)\\
&\le\sum_{Q\in\mathcal{D}_1(Q_0)}\left(\lambda_{Q,k_j}+\lambda_{Q}\right)|3Q|^{\frac{1}{p_0}}\\
&\le 2\cdot 2\cdot 3^nc_1^{\frac{1}{p_0}}
\le \frac{\varepsilon}{3}.
\end{align*}
For the set $\mathcal{D}_3(Q_0)$, from (\ref{eq4.1}), (\ref{eq4.5}) and H\"older's inequality on can deduce that
\begin{align*}
&\quad\sum_{Q\in\mathcal{D}_3(Q_0)}\int_{Q_0}|\lambda_{Q,k_j}b_{Q,k_j}(x)-\lambda_{Q}b_{Q}(x)|dx \\ &\le\sum_{Q\in\mathcal{D}_3(Q_0)}\left(\lambda_{Q,k_j}\int_{Q_0}|b_{Q,k_j}(x)dx+\lambda_{Q}\int_{Q_0}|b_{Q}(x)|dx\right)\\
&\le|Q_0|^{\frac{1}{n}\sum_{i=1}^{\infty}\frac{1}{p_i}} \sum_{Q\in\mathcal{D}_3(Q_0)}\left(\lambda_{Q,k_j}+\lambda_{Q}\right)|3Q|^{\frac{1}{p_0}-\frac{1}{n}\sum_{i=1}^
{\infty}\frac{1}{p_i}}\\
&\le 2\cdot 2\cdot 3^nc_2^{\frac{1}{p_0}-\frac{1}{n}\sum_{i=1}^{\infty}\frac{1}{p_i}}
\le \frac{\varepsilon}{3}.
\end{align*}
For the set $\mathcal{D}_2(Q_0)$, by $|Q_0|\in (c_1,c_2)$, we know that $\mathcal{D}_2(Q_0)$ contains the only finite number of dyadic cubes. When $j$ large enough, applying (\ref{eq4.2}) and (\ref{eq4.3}) we see,
\begin{align*}
&\quad\sum_{Q\in\mathcal{D}_2(Q_0)}\left|\int_{Q_0}\lambda_{Q,k_j}b_{Q,k_j}(x)-\lambda_{Q}b_{Q}(x)dx\right|\\
&\le \sum_{Q\in\mathcal{D}_2(Q_0)}|\lambda_{Q,k_j}|\left|\int_{Q_0}b_{Q,k_j}(x)-b_{Q}(x)dx\right|\\
&\quad+\sum_{Q\in\mathcal{D}_2(Q_0)}|\lambda_{Q,k_j}-\lambda_{Q}|\left|\int_{Q_0}b_{Q}(x)-b_{Q}(x)dx\right|
\le \frac{\varepsilon}{3}.
\end{align*}
Hence, $f_0(x)=f(x)$.

To finish the proof, we need to know that the expressions
$$\lim_{k\rightarrow\infty}\|f_k\|_{\mathcal{B}_{\vec{p}\,'}^{p'_0}(\mathbb{R}^n)}=\|f\|_{\mathcal{B}_{\vec{p}\,'}^{p'_0}(\mathbb{R}^n)}.$$
Using Lemma \ref{Dth} and the dominated convergence theorem, we show that there exist $g\in\mathcal{M}_{\vec{p}}^{p_0}(\mathbb{R}^n)$ such that
\begin{align*}
\|f\|_{\mathcal{B}_{\vec{p}\,'}^{p'_0}(\mathbb{R}^n)}&=\sup_{\|g\|_{\mathcal{M}_{\vec{p}}^{p_0}(\mathbb{R}^n)}\le 1} \int_{\mathbb{R}^n}\lim_{k\rightarrow\infty}\left|f_k(x)g(x)\right|dx\\
&=\lim_{k\rightarrow\infty}\sup_{\|g\|_{\mathcal{M}_{\vec{p}}^{p_0}(\mathbb{R}^n)}\le 1} \int_{\mathbb{R}^n}\left|f_k(x)g(x)\right|dx\\
&=\lim_{k\rightarrow\infty}\|f_k\|_{\mathcal{B}_{\vec{p}\,'}^{p'_0}(\mathbb{R}^n)}.
\end{align*}
The proof is completed.
\end{proof}

\begin{proof}[Proof of Theorem \ref{KDth}]
By Lemma \ref{Dth},
$$\mathcal{B}_{\vec{p}\,'}^{p'_0}(\mathbb{R}^n)\subset(\mathcal{M}_{\vec{p}}^{p_0}(\mathbb{R}^n))'.$$
It suffices to show that if $f$ satisfies that
$$\sup\left\{\int_{\mathbb{R}^n}\left|f(x)g(x)\right|dx:g\in \mathcal{M}_{\vec{p}}^{p_0}(\mathbb{R}^n),\|g\|_{\mathcal{M}_{\vec{p}}^{p_0}(\mathbb{R}^n)}\le 1\right\}=M<\infty,$$
then $f\in\mathcal{B}_{\vec{p}\,'}^{p'_0}(\mathbb{R}^n)$.

Without loss of generality, let $f\ge0$. For $k=1,2,\cdots$, set $Q_k=(-k,k)^n$, let
$$f_k(x):=\min\{f(x)/M,k/M\}\chi_{Q_k}(x).$$
Notice that
$$\|f_k\|_{L^{\vec{p}\,'}(\mathbb{R}^n)}\le \frac{k}{M}|Q_k|^{1-\frac{1}{n}\sum_{i=1}^{n}\frac{1}{p_i}}\le \frac{k}{M}|Q_k|^{1-\frac{1}{p_0}}\cdot|Q_k|^{\frac{1}{p_0}-\frac{1}{n}\sum_{i=1}^{n}\frac{1}{p_i}}.$$
Hence $f_k\in\mathcal{B}_{\vec{p}\,'}^{p'_0}(\mathbb{R}^n)$. Lemma \ref{Dth} yields
\begin{align*}
\|f_k\|_{\mathcal{B}_{\vec{p}\,'}^{p'_0}(\mathbb{R}^n)}&=\sup\left\{\int_{\mathbb{R}^n}\left|f_k(x)g(x)\right|dx:g\in \mathcal{M}_{\vec{p}}^{p_0}(\mathbb{R}^n),\|g\|_{\mathcal{M}_{\vec{p}}^{p_0}(\mathbb{R}^n)}\le 1\right\}\\
&\le \frac{1}{M}\sup\left\{\int_{\mathbb{R}^n}\left|f(x)g(x)\right|dx:g\in \mathcal{M}_{\vec{p}}^{p_0}(\mathbb{R}^n),\|g\|_{\mathcal{M}_{\vec{p}}^{p_0}(\mathbb{R}^n)}\le 1\right\}
\le 1.
\end{align*}
By the facts $f_k\uparrow f/M~a.e.$ and Theorem \ref{Fath}, we conclude that  $f\in\mathcal{B}_{\vec{p}\,'}^{p'_0}(\mathbb{R}^n)$.
\end{proof}

\section{The boundedness of the Hardy--Littlewood maximal function on the block spaces $\mathcal{B}_{\vec{p}\,'}^{p'_0}(\mathbb{R}^n)$}\label{sec5}
\par
In this section, we prove the boundedness of the Hardy--Littlewood maximal function on the block space $\mathcal{B}_{\vec{p}\,'}^{p'_0}(\mathbb{R}^n)$.
\begin{theorem}\label{th5.1}
Let $1<p_0,\vec{p}<\infty$ and $\frac{n}{p_0}<\sum_{i=1}^{n}\frac{1}{p_i}$. $M$ is bounded on  $\mathcal{B}_{\vec{p}\,'}^{p'_0}(\mathbb{R}^n)$.
\end{theorem}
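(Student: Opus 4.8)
The plan is to combine the atomic (block) structure of $\mathcal{B}_{\vec{p}\,'}^{p'_0}(\mathbb{R}^n)$ with the boundedness of $M$ on the mixed-norm Lebesgue space $L^{\vec{p}\,'}(\mathbb{R}^n)$. By Lemma \ref{le4.3}, every $f\in\mathcal{B}_{\vec{p}\,'}^{p'_0}(\mathbb{R}^n)$ can be written as $f=\sum_{Q\in\mathcal{D}}\lambda_Q b_Q$ with $\sum_{Q}\lambda_Q\le 2\cdot 3^n\|f\|_{\mathcal{B}_{\vec{p}\,'}^{p'_0}(\mathbb{R}^n)}$ and each $b_Q$ a $(p'_0,\vec{p}\,')$-block. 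Since $M$ is sublinear and countably subadditive on nonnegative functions, one has the pointwise bound $Mf\le\sum_{Q}\lambda_Q\,Mb_Q$; the lattice property in Definition \ref{def2.2}(\romannumeral1) and the triangle inequality for $\|\cdot\|_{\mathcal{B}_{\vec{p}\,'}^{p'_0}(\mathbb{R}^n)}$ then give $\|Mf\|_{\mathcal{B}_{\vec{p}\,'}^{p'_0}(\mathbb{R}^n)}\le\sum_{Q}\lambda_Q\|Mb_Q\|_{\mathcal{B}_{\vec{p}\,'}^{p'_0}(\mathbb{R}^n)}$. Taking the infimum over all such decompositions, the theorem reduces to the uniform single-block estimate $\|Mb\|_{\mathcal{B}_{\vec{p}\,'}^{p'_0}(\mathbb{R}^n)}\le C$ with $C$ independent of the block $b$.

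To prove this, I would fix a block $b$ with $\mathrm{supp}\,b\subset Q_0=Q(x_0,r)$ and $\|b\|_{L^{\vec{p}\,'}(\mathbb{R}^n)}\le|Q_0|^{\frac{1}{p_0}-\frac{1}{n}\sum_{i=1}^n\frac{1}{p_i}}$, decompose the space into the dilate $A_0=2Q_0$ and the dyadic annuli $A_k=2^{k+1}Q_0\setminus 2^kQ_0$ for $k\ge1$, and write $Mb=\sum_{k\ge0}(Mb)\chi_{A_k}$. Each summand is supported in the cube $2^{k+1}Q_0$, so once its $L^{\vec{p}\,'}$-norm is controlled it is equal to $c_k$ times a $(p'_0,\vec{p}\,')$-block; hence $\|Mb\|_{\mathcal{B}_{\vec{p}\,'}^{p'_0}(\mathbb{R}^n)}\le\sum_{k\ge0}c_k$, and matters reduce to the summability of $\{c_k\}$.

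For the local piece $k=0$ I would invoke that $M$ is bounded on $L^{\vec{p}\,'}(\mathbb{R}^n)$ for $1<\vec{p}\,'<\infty$ (which follows from the pointwise domination of $M$ by the strong maximal function and iteration of the one-dimensional Hardy--Littlewood inequality in each variable), so that $\|(Mb)\chi_{2Q_0}\|_{L^{\vec{p}\,'}}\le C\|b\|_{L^{\vec{p}\,'}}$ and $c_0\le C$. For the tail $k\ge1$ the decisive point is a pointwise decay estimate: if $x\in A_k$, then every cube containing $x$ that meets $Q_0$ has side length comparable to at least $2^kr$, whence $Mb(x)\le C\,2^{-kn}|Q_0|^{-1}\|b\|_{L^1(Q_0)}$. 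Using Hölder's inequality $\|b\|_{L^1(Q_0)}\le|Q_0|^{\frac{1}{n}\sum_{i=1}^n\frac{1}{p_i}}\|b\|_{L^{\vec{p}\,'}}\le|Q_0|^{\frac{1}{p_0}}$ together with $\|\chi_{2^{k+1}Q_0}\|_{L^{\vec{p}\,'}}=|2^{k+1}Q_0|^{\frac{1}{n}\sum_{i=1}^n\frac{1}{p_i'}}$ and collecting the exponents, I expect to obtain $c_k\le C\,2^{-kn/p_0}$, which is summable since $p_0<\infty$.

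The main obstacle is exactly the non-locality of the maximal function: a block is compactly supported but $Mb$ is not, so $Mb$ cannot be a bounded multiple of a single block. The annular decomposition resolves this, and the geometric decay $c_k\lesssim 2^{-kn/p_0}$ (ratio $2^{-n/p_0}<1$) is what makes the series converge uniformly in $b$; the standing hypothesis $\frac{n}{p_0}<\sum_{i=1}^n\frac{1}{p_i}$ keeps the block framework and Lemma \ref{le4.3} available throughout. A minor technical point to verify is the legitimacy of $Mf\le\sum_Q\lambda_Q Mb_Q$, which is justified by the countable subadditivity of $M$ on nonnegative functions.
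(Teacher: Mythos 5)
Your proposal is correct and follows essentially the same route as the paper: reduce to a uniform single-block estimate via the block decomposition, split $Mb$ into the local piece (handled by the boundedness of $M$ on $L^{\vec{p}\,'}(\mathbb{R}^n)$) and dyadic annuli (handled by the pointwise decay $Mb\lesssim 2^{-kn}|Q_0|^{-1}\|b\|_{L^1}$ plus H\"older), arriving at the same geometric decay $c_k\lesssim 2^{-kn/p_0}$. The only cosmetic difference is that you compute $\|\chi_{2^{k+1}Q_0}\|_{L^{\vec{p}\,'}(\mathbb{R}^n)}$ directly (which is exact for cubes), whereas the paper routes this step through the duality relation $|Q|\sim\|\chi_Q\|_{\mathbf{X}}\|\chi_Q\|_{\mathbf{X}'}$ of Lemma \ref{Mth}.
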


Before our proof, let us recall some necessary lemmas. The Lemma \ref{le5.2} and Lemma \ref{Mth} can be found in \cite[Lemma 3.5]{27} and \cite[Lemma 2.2]{28}.
\begin{lemma}\label{le5.2}
Let $1<\vec{p}\le\infty$. Then there exists a positive constant $C$, depending on $\vec{p}$, such that, for any $f\in L^{\vec{p}}(\mathbb{R}^n)$,
$$\|Mf\|_{L^{\vec{p}}(\mathbb{R}^n)}\le C\|f\|_{L^{\vec{p}}(\mathbb{R}^n)}.$$
\end{lemma}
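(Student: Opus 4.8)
The plan is to reduce the cube maximal operator $M$ to a composition of one–dimensional Hardy–Littlewood maximal operators and then to bound each factor on $L^{\vec p}(\mathbb{R}^n)$. For $1\le i\le n$ let $M^{(i)}$ denote the (uncentered) one–dimensional maximal operator acting in the $i$-th variable only, that is
\[
M^{(i)}g(x)=\sup_{I\ni x_i}\frac{1}{|I|}\int_I |g(x_1,\dots,x_{i-1},t,x_{i+1},\dots,x_n)|\,dt,
\]
the supremum being over bounded intervals $I\subset\mathbb{R}$ containing $x_i$. First I would record the pointwise domination $Mf(x)\le M^{(1)}M^{(2)}\cdots M^{(n)}f(x)$: any cube $Q\ni x$ of side $r$ is a product $I_1\times\cdots\times I_n$ with $x_i\in I_i$ and $|I_i|=r$, so writing the cube average $|Q|^{-1}\int_Q|f|$ as an iterated average over $I_1,\dots,I_n$ and estimating each one–dimensional interval average by the corresponding $M^{(i)}$ yields the claim after taking the supremum over $Q$. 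It therefore suffices to prove that each $M^{(i)}$ is bounded on $L^{\vec p}(\mathbb{R}^n)$ with norm depending only on $p_i$, and then to compose the $n$ estimates.

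The innermost factor is immediate. For $M^{(1)}$, freeze $(x_2,\dots,x_n)$ and apply the classical one–dimensional maximal theorem in $x_1$ (where $p_1>1$ is used) to get $\|M^{(1)}g(\cdot,x_2,\dots,x_n)\|_{L^{p_1}(dx_1)}\le C_{p_1}\|g(\cdot,x_2,\dots,x_n)\|_{L^{p_1}(dx_1)}$ for a.e.\ fixed outer variables; since the remaining $L^{p_2},\dots,L^{p_n}$ norms are applied from the inside out and are monotone (Definition \ref{def2.2}(i), which holds for $L^{\vec p}$ by Remark \ref{re2.7}(ii)), this upgrades to $\|M^{(1)}g\|_{L^{\vec p}}\le C_{p_1}\|g\|_{L^{\vec p}}$. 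For a general index $i$ one freezes the outer variables $(x_{i+1},\dots,x_n)$ and must establish the one–variable estimate
\[
\Big\|\,\|M^{(i)}g\|_{L^{p_1}_{x_1}\cdots L^{p_{i-1}}_{x_{i-1}}}\Big\|_{L^{p_i}_{x_i}}\le C_{p_i}\Big\|\,\|g\|_{L^{p_1}_{x_1}\cdots L^{p_{i-1}}_{x_{i-1}}}\Big\|_{L^{p_i}_{x_i}},
\]
after which the outer norms are restored by monotonicity as before. Writing $Y=L^{(p_1,\dots,p_{i-1})}(\mathbb{R}^{i-1})$ for the inner mixed–norm space, this last inequality is exactly the statement that the scalar operator $M^{(i)}$, acting in $x_i$, is bounded on the Bochner–type space $L^{p_i}(\mathbb{R};Y)$, i.e.\ a vector–valued (lattice) maximal inequality.

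The main obstacle is precisely this vector–valued step for $i\ge 2$: because the variable $x_i$ sits between the inner norms $L^{p_1},\dots,L^{p_{i-1}}$ and the outer ones, one cannot simply push the $L^{p_i}_{x_i}$ norm inside to reduce to the scalar theorem. Minkowski's integral inequality permits interchanging two iterated norms only in one direction, and here the exponents $p_1,\dots,p_n$ need not be monotonically ordered, so no single admissible interchange is available. The resolution I would use is the continuous Fefferman–Stein vector–valued maximal inequality: for $1<p<\infty$ and $1<q\le\infty$ the operator $M^{(i)}$ is bounded on $L^{p}(\mathbb{R};L^q)$, and more generally on $L^{p_i}(\mathbb{R};Y)$ whenever $Y$ is a mixed–norm Lebesgue space with all exponents in $(1,\infty)$. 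The cleanest way to organize this is an induction on the dimension $n$: the boundedness of the one–dimensional maximal operators on the lower–dimensional space $Y$ (the inductive hypothesis) feeds, through the lattice form of the Fefferman–Stein inequality, into the boundedness of $M^{(i)}$ on $L^{p_i}(\mathbb{R};Y)$. Composing the resulting bounds for $M^{(1)},\dots,M^{(n)}$ with the pointwise domination then yields $\|Mf\|_{L^{\vec p}(\mathbb{R}^n)}\le C\|f\|_{L^{\vec p}(\mathbb{R}^n)}$, completing the argument.
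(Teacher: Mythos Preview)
The paper does not supply its own proof of this lemma; it simply quotes \cite[Lemma~3.5]{27}. Your plan---dominate $M$ pointwise by the composition $M^{(1)}\cdots M^{(n)}$ of one-dimensional maximal operators and then bound each factor on $L^{\vec p}(\mathbb{R}^n)$---is exactly the classical route taken in the literature (going back to Bagby and St\"ockert, and used in the reference the paper cites), and it is correct.

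The only point worth flagging is the vector-valued step for $i\ge 3$. Boundedness of the one-dimensional maximal operator on $L^{p_i}(\mathbb{R};Y)$ with $Y=L^{(p_1,\dots,p_{i-1})}(\mathbb{R}^{i-1})$ does not follow from the \emph{classical} Fefferman--Stein inequality (which covers only $Y=L^q$) by a naive iteration, precisely because of the order-of-norms obstruction you yourself identify. One genuinely needs the Banach-lattice version---for instance via the UMD property of mixed Lebesgue spaces (Bourgain, Rubio de Francia), or equivalently via the weighted Fefferman--Stein inequality of Andersen--John combined with extrapolation. Your phrase ``lattice form of the Fefferman--Stein inequality'' points to the right theorem, but be aware that it is a substantial input in its own right and is not a formal consequence of the inductive hypothesis on $Y$ alone. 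With that black box in hand, the plan goes through and matches what the cited reference does.
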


\begin{lemma}\label{Mth}
If $\mathbf{X}$ is a ball Banach function space and $M$ is bounded on $\mathbf{X}$, then
$$|Q|\sim\|\chi_{Q}\|_{\mathbf{X}}\|\chi_Q\|_{\mathbf{X}'}.$$
\end{lemma}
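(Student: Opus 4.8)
The plan is to establish the equivalence $|Q|\sim\|\chi_Q\|_{\mathbf{X}}\|\chi_Q\|_{\mathbf{X}'}$ by proving the two one-sided inequalities separately, noting in advance that only the upper bound $\|\chi_Q\|_{\mathbf{X}}\|\chi_Q\|_{\mathbf{X}'}\le C|Q|$ actually exploits the boundedness of $M$, whereas the lower bound $|Q|\le\|\chi_Q\|_{\mathbf{X}}\|\chi_Q\|_{\mathbf{X}'}$ holds for any ball Banach function space as a free consequence of the generalized H\"older inequality.

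For the lower bound I would simply apply the H\"older inequality for associate spaces recorded in Remark \ref{re2.4}(\romannumeral2), namely $\int_{\mathbb{R}^n}|f(x)g(x)|\,dx\le\|f\|_{\mathbf{X}}\|g\|_{\mathbf{X}'}$, with the choice $f=g=\chi_Q$. Since $\int_{\mathbb{R}^n}\chi_Q(x)\chi_Q(x)\,dx=|Q|$, this yields $|Q|\le\|\chi_Q\|_{\mathbf{X}}\|\chi_Q\|_{\mathbf{X}'}$ at once, and requires no hypothesis on $M$.

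The upper bound is the heart of the argument and is where the assumption on $M$ is used. Using the representation of the associate norm in (\ref{eq2.1}), I would write $\|\chi_Q\|_{\mathbf{X}'}=\sup\{\int_Q|g(x)|\,dx:\|g\|_{\mathbf{X}}=1\}$. The key observation is that for any competitor $g$ with $\|g\|_{\mathbf{X}}=1$ and any $x\in Q$, the cube $Q$ is itself admissible in the supremum defining $M(g)(x)$, so that $\frac{1}{|Q|}\int_Q|g(y)|\,dy\le M(g)(x)$; for $x\notin Q$ the left-hand side vanishes. Hence the nonnegative constant $\frac{1}{|Q|}\int_Q|g(y)|\,dy$ times $\chi_Q$ is dominated pointwise by $M(g)$. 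Taking $\mathbf{X}$-norms and invoking the lattice property in Definition \ref{def2.2}(\romannumeral1) together with the boundedness of $M$, I obtain
$$\frac{1}{|Q|}\left(\int_Q|g(y)|\,dy\right)\|\chi_Q\|_{\mathbf{X}}\le\|M(g)\|_{\mathbf{X}}\le C\|g\|_{\mathbf{X}}=C,$$
where $C$ denotes the operator norm of $M$ on $\mathbf{X}$. Rearranging and taking the supremum over all such $g$ then gives $\|\chi_Q\|_{\mathbf{X}'}\le C|Q|/\|\chi_Q\|_{\mathbf{X}}$, i.e. $\|\chi_Q\|_{\mathbf{X}}\|\chi_Q\|_{\mathbf{X}'}\le C|Q|$. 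Combining this with the lower bound finishes the proof.

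I do not anticipate a serious obstacle here: the essential device is recognizing that the averaging inequality $\frac{1}{|Q|}\int_Q|g|\le M(g)$ on $Q$ is exactly what transfers the $\mathbf{X}'$-norm of $\chi_Q$ into a controlled multiple of $M$ applied to $g$. The only technical point worth flagging is the legitimacy of dividing by $\|\chi_Q\|_{\mathbf{X}}$, which is finite by Definition \ref{def2.2}(\romannumeral3) and strictly positive by Definition \ref{def2.2}(\romannumeral4) (as $|Q|\le C_Q\|\chi_Q\|_{\mathbf{X}}$ forces $\|\chi_Q\|_{\mathbf{X}}>0$ whenever $|Q|<\infty$).
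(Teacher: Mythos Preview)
The paper does not actually prove Lemma \ref{Mth}; it simply cites it from \cite[Lemma 2.2]{28} (Izuki--Sawano), so there is no in-paper argument to compare against. Your proof is correct and is in fact the standard argument for this result: the lower bound is the generalized H\"older inequality, and the upper bound comes from the pointwise domination $\frac{1}{|Q|}\int_Q|g|\,\chi_Q\le M g$ combined with the lattice property and the assumed boundedness of $M$ on $\mathbf{X}$. Your remarks on why $\|\chi_Q\|_{\mathbf{X}}$ is finite and nonzero are also well placed.
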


By a similar argument to \cite{ST2009}, we show the detailed proof of Theorem \ref{th5.1} as follows.
\begin{proof}[Proof of Theorem \ref{th5.1}]
Suppose that $b$ is a $(p_0,\vec{p})$-block and $\text{supp}~b\subset Q(x_0,r)$. Let $Q_k=Q(x_0,2^kr)$, $m_k(x)=\chi_{Q_{k+1}\setminus Q_{k}}(x)M(b)(x)$ and $m_0(x)=\chi_{Q_{1}}(x)M(b)(x)$, for $k=1,2,...$. Then
$$Mb(x)=\sum_{k=0}^{\infty}m_k(x),$$
and for $k=0,1,2,\cdots$,
$$\text{supp}~m_k\subset Q_{k+1}.$$
Applying Lemma \ref{le5.2}, we deduce that
$$\|m_0\|_{L^{\vec{p}\,'}(\mathbb{R}^n)}\le \|Mb\|_{L^{\vec{p}\,'}(\mathbb{R}^n)}\lesssim\|b\|_{L^{\vec{p}\,'}(\mathbb{R}^n)} \le2^{\frac{1}{n}\sum_{i=1}^{n}\frac{1}{p_i}-\frac{1}{p_0}}|Q_1| ^{\frac{1}{p_0}-\frac{1}{n}\sum_{i=1}^{n}\frac{1}{p_i}}.$$
And using the definition of $M$ and H\"older's inequality, for any $k\in\mathbb{N}\backslash\{0\}$,
\begin{align*}
|m_k(x)|&=\chi_{Q_{k+1}\backslash Q_k}(x)|Mb(x)|\\
&\lesssim \frac{\chi_{Q_{k+1}\backslash Q_k}(x)}{(2^kr)^n}\int_{Q(x_0,r)}b(y)dy\\
&\le \frac{\chi_{Q_{k+1}\backslash Q_k}(x)}{(2^kr)^n}\|b\|_{L^{\vec{p}\,'}(\mathbb{R}^n)}\|\chi_{Q(x_0,r)}\|_{L^{\vec{p}}(\mathbb{R}^n)}.
\end{align*}
Applying Lemma \ref{Mth}, we write
\begin{align*}
\|m_k\|_{L^{\vec{p}\,'}(\mathbb{R}^n)}&\lesssim \frac{\|\chi_{Q_{k+1}}\|_{L^{\vec{p}\,'}(\mathbb{R}^n)}}{(2^kr)^n}\|b\|_{L^{\vec{p}\,'}(\mathbb{R}^n)}\|\chi_{Q(x_0,r)}\|_{L^{\vec{p}}(\mathbb{R}^n)}\\
&=\frac{\|\chi_{Q_{k+1}}\|_{L^{\vec{p}\,'}(\mathbb{R}^n)}\cdot\|\chi_{Q_{k+1}}\|_{L^{\vec{p}}(\mathbb{R}^n)}} {\|\chi_{Q_{k+1}}\|_{L^{\vec{p}}(\mathbb{R}^n)}}\cdot \frac{\|b\|_{L^{\vec{p}\,'}(\mathbb{R}^n)}\|\chi_{Q(x_0,r)}\|_{L^{\vec{p}}(\mathbb{R}^n)}}{(2^kr)^n}\\
&\sim \frac{|Q(x_0,2^{k+1}r)|} {\|\chi_{Q_{k+1}}\|_{L^{\vec{p}}(\mathbb{R}^n)}}\cdot \frac{\|b\|_{L^{\vec{p}\,'}(\mathbb{R}^n)}\|\chi_{Q(x_0,r)}\|_{L^{\vec{p}}(\mathbb{R}^n)}}{(2^kr)^n}\\
&\lesssim \left(\frac{1}{2^{nk}}\right)^{\frac{1}{p_0}}\cdot |Q_{k+1}|^{\frac{1}{p_0}-\frac{1}{n}\sum_{i=1}^{n}\frac{1}{p_i}}.
\end{align*}
It is obvious that $\frac{1}{p_0}>0$ and
$$\sum_{k=1}^{\infty}\left(\frac{1}{2^{nk}}\right)^{\frac{1}{p_0}}<C_{n,p_0,\vec{p}}<\infty,$$
where $C_{n,p_0,\vec{p}}$ only depends on $n,~p_0$ and $\vec{p}$. Then, for any $(p_0',\vec{p}\,')$-block $b$,
$$\|Mb\|_{\mathcal{B}_{\vec{p}\,'}^{p'_0}(\mathbb{R}^n)}\le C_{n,p_0,\vec{p}}.$$

If $f\in \mathcal{B}_{\vec{p}\,'}^{p'_0}(\mathbb{R}^n)$, then there exists a decomposition such that $f=\sum_{k=1}^{\infty}\lambda_{k}b_{k}(x)$ and
$$\sum_{k=1}^{\infty}|\lambda_{k}|\le 2\|f\|_{\mathcal{B}_{\vec{p}\,'}^{p'_0}(\mathbb{R}^n)}.$$
Hence
\begin{align*}
\|Mf\|_{\mathcal{B}_{\vec{p}\,'}^{p'_0}(\mathbb{R}^n)} \le\sum_{k=1}^{\infty}|\lambda_k|\|Mb_k\|_{\mathcal{B}_{\vec{p}\,'}^{p'_0}(\mathbb{R}^n)}
\le C_{n,p_0,\vec{p}}\sum_{k=1}^{\infty}|\lambda_k|
\le 2C_{n,p_0,\vec{p}}\|f\|_{\mathcal{B}_{\vec{p}\,'}^{p'_0}(\mathbb{R}^n)}.
\end{align*}
This completes the proof of Theorem \ref{th5.1}.
\end{proof}

\section{Applications}\label{sec6}
We will prove the boundedness of $[b,I_{\alpha}]$ on mixed Morrey spaces and block spaces in this section.
\begin{theorem}\label{th6.3}
Let $0<\alpha<n,~1<p_0,q_0,\vec{p},\vec{q}<\infty$,
$$1<\vec{p}\le\vec{q}<\infty~ \text{and} ~\alpha=\frac{n}{p_0}-\frac{n}{q_0}=\sum_{i=1}^n\frac{1}{p_i}-\sum_{i=1}^n\frac{1}{q_i}.$$
Then, the following conditions are equivalent:
\begin{itemize}
\item[(\romannumeral1)] $b\in BMO(\mathbb{R}^n)$.
\item[(\romannumeral2)] $[b,I_\alpha]$ is bounded from $\mathcal{M}_{\vec{p}}^{p_0}(\mathbb{R}^n)$ to $\mathcal{M}_{\vec{q}}^{q_0}(\mathbb{R}^n)$.
\end{itemize}
\end{theorem}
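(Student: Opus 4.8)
The plan is to prove the two implications separately, and the identity $\alpha=\frac{n}{p_0}-\frac{n}{q_0}=\sum_{i=1}^n\frac{1}{p_i}-\sum_{i=1}^n\frac{1}{q_i}$ will be used throughout to guarantee that every scaling computation is homogeneous of degree zero in the side length of a cube. For $(\mathrm{i})\Rightarrow(\mathrm{ii})$ I would run the sharp maximal function machinery, and for $(\mathrm{ii})\Rightarrow(\mathrm{i})$ I would test the commutator against characteristic functions of well-separated cubes.

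For the forward implication, I would first record two mapping properties on mixed Morrey spaces which are known: the boundedness of $I_\alpha\colon\mathcal{M}_{\vec{p}}^{p_0}(\mathbb{R}^n)\to\mathcal{M}_{\vec{q}}^{q_0}(\mathbb{R}^n)$ and of the fractional maximal operator $M_\alpha f(x)=\sup_{Q\ni x}|Q|^{\alpha/n}\frac{1}{|Q|}\int_Q|f|$ between the same spaces, together with the boundedness of $M$ from Section~\ref{sec5}. The key pointwise input is the standard estimate: for $b\in BMO(\mathbb{R}^n)$, $0<\delta<1$ and $r>1$ close to $1$,
$$M^{\sharp}_{\delta}([b,I_\alpha]f)(x)\lesssim\|b\|_{BMO(\mathbb{R}^n)}\left(M_{\alpha,r}f(x)+M_r(I_\alpha f)(x)\right),$$
where $M^{\sharp}_{\delta}g:=(M^{\sharp}(|g|^{\delta}))^{1/\delta}$, $M_rg:=(M(|g|^r))^{1/r}$ and $M_{\alpha,r}f:=(M_\alpha(|f|^r))^{1/r}$. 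Taking $\mathcal{M}_{\vec{q}}^{q_0}$-norms and invoking a Fefferman--Stein inequality $\|g\|_{\mathcal{M}_{\vec{q}}^{q_0}}\lesssim\|M^{\sharp}_{\delta}g\|_{\mathcal{M}_{\vec{q}}^{q_0}}$ (legitimate once $\|M_\delta g\|_{\mathcal{M}_{\vec{q}}^{q_0}}<\infty$, which I would arrange by a standard truncation-and-limiting argument using the Fatou property in Remark~\ref{re2.9}(b)), the problem reduces to bounding $M_{\alpha,r}f$ and $M_r(I_\alpha f)$. Since $r$ is close to $1$, both are controlled by the boundedness of $M$ on the $(1/r)$-convexification $\mathcal{M}_{\vec{q}/r}^{q_0/r}(\mathbb{R}^n)$ and of $M_\alpha$, so that
$$\|[b,I_\alpha]f\|_{\mathcal{M}_{\vec{q}}^{q_0}}\lesssim\|b\|_{BMO(\mathbb{R}^n)}\left(\|M_{\alpha,r}f\|_{\mathcal{M}_{\vec{q}}^{q_0}}+\|I_\alpha f\|_{\mathcal{M}_{\vec{q}}^{q_0}}\right)\lesssim\|b\|_{BMO(\mathbb{R}^n)}\|f\|_{\mathcal{M}_{\vec{p}}^{p_0}}.$$

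For the converse, fix a cube $Q=Q(x_0,\ell)$ and let $\tilde{Q}=Q(\tilde{x}_0,\ell)$ be a cube of the same side length with $\mathrm{dist}(Q,\tilde{Q})\sim N\ell$ for a large fixed $N$, so that $|x-y|\sim N\ell$ and the kernel is nearly constant whenever $x\in\tilde{Q}$, $y\in Q$. Writing $b(x)-b(y)=(b(x)-b_Q)-(b(y)-b_Q)$ gives, for $x\in\tilde{Q}$,
$$(b(x)-b_Q)\,I_\alpha\chi_Q(x)=[b,I_\alpha]\chi_Q(x)+\int_Q\frac{b(y)-b_Q}{|x-y|^{n-\alpha}}\,dy.$$
Because $\int_Q(b-b_Q)=0$, the last integral equals $\int_Q(b(y)-b_Q)(|x-y|^{\alpha-n}-c)\,dy$ for any constant $c$; choosing $c$ to be the value of the kernel at the centres makes the remaining factor an $O(1/N)$ fraction of $I_\alpha\chi_Q(x)\sim N^{\alpha-n}\ell^{\alpha}$, so this term is bounded by $\frac{C}{N}I_\alpha\chi_Q(x)\cdot\frac{1}{|Q|}\int_Q|b-b_Q|$. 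Dividing by $I_\alpha\chi_Q(x)$, integrating over $\tilde{Q}$, and applying Remark~\ref{re2.9}(c),(d) with $\|\chi_Q\|_{\mathcal{M}_{\vec{p}}^{p_0}}=\ell^{n/p_0}$ (here the index relation forces every power of $\ell$ to cancel) yields
$$\frac{1}{|\tilde{Q}|}\int_{\tilde{Q}}|b-b_Q|\lesssim N^{n-\alpha}\,\|[b,I_\alpha]\|_{\mathcal{M}_{\vec{p}}^{p_0}\to\mathcal{M}_{\vec{q}}^{q_0}}+\frac{C}{N}\cdot\frac{1}{|Q|}\int_Q|b-b_Q|.$$
Since $\tilde{Q}$ is an arbitrary cube of side $\ell$ and $Q$ is its neighbour of the same size, replacing $b_Q$ by $b_{\tilde{Q}}$ (at the cost of a factor $2$), taking the supremum over all cubes of side $\ell$, and choosing $N$ so large that $C/N<\tfrac{1}{2}$ absorbs the last term and produces $\|b\|_{BMO(\mathbb{R}^n)}\lesssim N^{n-\alpha}\|[b,I_\alpha]\|$.

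I expect the converse to be the main obstacle, and specifically this final absorption step: the error term couples the mean oscillation over $\tilde{Q}$ to that over the neighbouring cube $Q$, so the absorption is only legitimate once the quantity $\sup_{\ell(Q)=\ell}\frac{1}{|Q|}\int_Q|b-b_Q|$ is known to be finite for each fixed $\ell$. I would justify this by noting that each individual oscillation is finite because $b\in L^1_{\mathrm{loc}}(\mathbb{R}^n)$, and then either iterate the local inequality along a chain of neighbouring cubes so that the geometric factor $(C/N)^k$ kills the tail, or truncate $b$ and pass to the limit. By contrast, the scaling arithmetic and the sharp maximal estimate of the forward direction are routine once the ingredients from Sections~\ref{sec4} and~\ref{sec5} are in hand.
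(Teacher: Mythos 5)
Your forward implication is essentially the paper's own proof. The paper runs exactly the sharp-maximal scheme you outline: the pointwise estimate of Lemma \ref{le6.2} (with $I_{\alpha}(|f|)+I_{r\alpha}(|f|^r)^{1/r}$ where you use $M_{\alpha,r}f+M_r(I_\alpha f)$ --- an inessential variant, since $M_\alpha\lesssim I_\alpha$ pointwise and both are handled by $r$-convexification of Lemma \ref{le6.1}), together with the Fefferman--Stein inequality on $\mathcal{M}_{\vec{q}}^{q_0}(\mathbb{R}^n)$ (Lemma \ref{SMIth}), which the paper derives by extrapolation (Lemma \ref{EIth}) from the boundedness of $M$ on the K\"othe dual block space (Theorems \ref{KDth} and \ref{th5.1}). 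One point where the paper is more careful than your sketch: the a priori hypothesis of the Fefferman--Stein step is verified by showing $[b,I_\alpha]f\in L^s_{\omega}(\mathbb{R}^n)$, via the scalar Morrey result with $p=\min_i p_i$ and the embedding Lemma \ref{ELth}; your ``truncation-and-limiting using the Fatou property'' would have to be spelled out to play this role.

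Your converse, however, is genuinely different from the paper's. The paper uses Janson's device: expand $|x|^{n-\alpha}$ in an absolutely convergent Fourier series on a cube $Q(z_0,2\sqrt{n})$ away from the origin, express $\frac{1}{|Q|}\int_Q|b-b_{Q_{z_0}}|$ as a sum of pairings of $[b,I_\alpha]$ applied to modulated characteristic functions against functions in the block space, and use $\|\chi_Q\|_{\mathcal{B}_{\vec{q}\,'}^{q'_0}(\mathbb{R}^n)}\lesssim t^{n-\frac{n}{q_0}}$; the index relation cancels all powers of $t$ and the $BMO$ bound is immediate, with no self-improvement or absorption. Your argument is the Shirai/Chanillo-style direct testing on separated cubes, and your scaling arithmetic is correct; note that Remark \ref{re2.9} (item (d)) already gives the pairing $\frac{1}{|\tilde{Q}|}\int_{\tilde{Q}}|[b,I_\alpha]\chi_Q|\le|\tilde{Q}|^{-1/q_0}\|[b,I_\alpha]\chi_Q\|_{\mathcal{M}_{\vec{q}}^{q_0}(\mathbb{R}^n)}$, so your route does not even need the duality theory of Section \ref{sec4}, which the paper's does. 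What each buys: yours is more elementary and self-contained; the paper's avoids entirely the absorption issue you flag, at the cost of the Fourier-series machinery.

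On that flagged issue: it is real, and your two proposed repairs are not equally viable. Truncation is problematic, because boundedness of $[b,I_\alpha]$ does not transfer to the commutators with the truncations $\max\{\min\{b,m\},-m\}$, so you cannot run the estimate for truncated symbols and pass to the limit. The iteration idea does work, and its cleanest form is to bounce between the two cubes rather than along a long chain: apply your inequality to the ordered pair $(\tilde{Q},Q)$ and then, by symmetry of the construction, to $(Q,\tilde{Q})$; substituting one into the other yields $O(\tilde{Q})\le C_1N^{n-\alpha}\|[b,I_\alpha]\|+(C_2/N)^2\,O(\tilde{Q})$ up to harmless constants, where $O(\tilde{Q})=\frac{1}{|\tilde{Q}|}\int_{\tilde{Q}}|b-b_{\tilde{Q}}|$ is finite for each \emph{fixed} cube since $b\in L^1_{\mathrm{loc}}(\mathbb{R}^n)$. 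The absorption is then performed cube by cube, and only afterwards does one take the supremum. As you wrote it --- taking $\sup_{\ell(Q)=\ell}$ first and absorbing a possibly infinite quantity --- the step is illegitimate; with the pairwise swap it is sound, and your converse then goes through.
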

\begin{remark}\label{re6.4}
Taking $\frac{n}{p_0}=\sum_{i=1}^n\frac{1}{p_i}$ and $\frac{n}{q_0}=\sum_{i=1}^n\frac{1}{q_i}$, then the result of \cite{N2019(2)} need the condition
$$p_j\sum_{i=1}^n\frac{1}{p_i}=q_j\sum_{i=1}^n\frac{1}{q_i}~~(j=1,\cdots,n).$$
But Theorem \ref{th6.3} does not need this condition.
\end{remark}
\begin{theorem}\label{th6.1}
\begin{itemize}
\item[(\romannumeral1)]Let $0<\alpha<n,~1<p_0,q_0,\vec{p},\vec{q}<\infty$, $\frac{n}{p_0}\le\sum_{i=1}^n\frac{1}{p_i}$ and $\frac{n}{q_0}\le\sum_{i=1}^n\frac{1}{q_i}$. If $I_{\alpha}$ is bounded from $\mathcal{M}_{\vec{p}}^{p_0}(\mathbb{R}^n)$ to $\mathcal{M}_{\vec{q}}^{q_0}(\mathbb{R}^n)$, then for any measurable function $f$
    $$\|I_{\alpha}f\|_{\mathcal{B}_{\vec{p}\,'}^{p'_0}(\mathbb{R}^n)}\lesssim \|f\|_{\mathcal{B}_{\vec{q}\,'}^{q'_0}(\mathbb{R}^n)}.$$
\item[(\romannumeral2)]Let $0<\alpha<n,~1<p_0,q_0,\vec{p},\vec{q}<\infty$, $\frac{n}{p_0}\le\sum_{i=1}^n\frac{1}{p_i}$ and $\frac{n}{q_0}\le\sum_{i=1}^n\frac{1}{q_i}$. If $[b,I_{\alpha}]$ is bounded from $\mathcal{M}_{\vec{p}}^{p_0}(\mathbb{R}^n)$ to $\mathcal{M}_{\vec{q}}^{q_0}(\mathbb{R}^n)$, then for any measurable function $f$
    $$\left\|[b,I_{\alpha}]f\right\|_{\mathcal{B}_{\vec{p}\,'}^{p'_0}(\mathbb{R}^n)}\lesssim \|f\|_{\mathcal{B}_{\vec{q}\,'}^{q'_0}(\mathbb{R}^n)}.$$
\end{itemize}
\end{theorem}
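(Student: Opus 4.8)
The plan is to read the claimed estimate as a statement about the (formal) adjoint of $I_\alpha$ and to exploit the self-adjointness encoded in (\ref{eq1.1}) together with the Köthe duality $\mathcal{B}_{\vec{p}\,'}^{p'_0}(\mathbb{R}^n)=(\mathcal{M}_{\vec{p}}^{p_0}(\mathbb{R}^n))'$ from Theorem \ref{KDth}. Since $I_\alpha$ coincides with its own adjoint, the boundedness $I_\alpha\colon\mathcal{M}_{\vec{p}}^{p_0}\to\mathcal{M}_{\vec{q}}^{q_0}$ should transfer, by dualization, to a bound $I_\alpha\colon\mathcal{B}_{\vec{q}\,'}^{q'_0}\to\mathcal{B}_{\vec{p}\,'}^{p'_0}$, which is exactly (\romannumeral1); part (\romannumeral2) is the same for $[b,I_\alpha]$ using the antisymmetry (\ref{eq1.2}).

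For (\romannumeral1), I would assume $f\in\mathcal{B}_{\vec{q}\,'}^{q'_0}(\mathbb{R}^n)$ (otherwise the right-hand side is infinite and there is nothing to prove). By Theorem \ref{KDth} the block space is the associate space of the mixed Morrey space, so, writing the associate norm in its signed form (Definition \ref{def2.3} together with the sign argument of Remark \ref{re2.4}(\romannumeral4)),
$$\|I_\alpha f\|_{\mathcal{B}_{\vec{p}\,'}^{p'_0}(\mathbb{R}^n)}=\sup\left\{\left|\int_{\mathbb{R}^n}I_\alpha f(x)\,g(x)\,dx\right|:g\in\mathcal{M}_{\vec{p}}^{p_0}(\mathbb{R}^n),\ \|g\|_{\mathcal{M}_{\vec{p}}^{p_0}(\mathbb{R}^n)}\le1\right\}.$$
Next I would apply (\ref{eq1.1}) to move $I_\alpha$ onto the test function, replacing $\int I_\alpha f\cdot g$ by $\int f\cdot I_\alpha g$, and then estimate by Hölder's inequality in the form of Remark \ref{re2.4}(\romannumeral2), together with Theorem \ref{KDth} for the pair $(\vec{q},q_0)$:
$$\left|\int_{\mathbb{R}^n}f(x)\,I_\alpha g(x)\,dx\right|\le\|f\|_{\mathcal{B}_{\vec{q}\,'}^{q'_0}(\mathbb{R}^n)}\,\|I_\alpha g\|_{\mathcal{M}_{\vec{q}}^{q_0}(\mathbb{R}^n)}.$$
The hypothesis that $I_\alpha\colon\mathcal{M}_{\vec{p}}^{p_0}\to\mathcal{M}_{\vec{q}}^{q_0}$ is bounded gives $\|I_\alpha g\|_{\mathcal{M}_{\vec{q}}^{q_0}}\lesssim\|g\|_{\mathcal{M}_{\vec{p}}^{p_0}}\le1$, and taking the supremum over admissible $g$ yields (\romannumeral1). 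For (\romannumeral2) I would repeat the argument verbatim with $[b,I_\alpha]$ in place of $I_\alpha$, invoking (\ref{eq1.2}) instead of (\ref{eq1.1}); the extra minus sign is absorbed by the absolute value in the associate norm (equivalently, by replacing $g$ with $-g$), so no further change is needed.

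The main obstacle is one of rigor rather than of strategy. The identities (\ref{eq1.1}) and (\ref{eq1.2}) are Fubini-type interchanges, so before swapping the order of integration I must secure absolute convergence of the relevant double integral, for instance $\int_{\mathbb{R}^n}\int_{\mathbb{R}^n}|f(x)|\,|g(y)|\,|x-y|^{\alpha-n}\,dy\,dx$ and its commutator analogue. I would handle this by first proving the estimate for nonnegative, bounded, compactly supported $f$ and $g$, where Tonelli applies directly, and then removing the truncation by monotone and dominated convergence using the a priori bound just obtained. A secondary point to verify is the boundary case $\frac{n}{p_0}=\sum_{i=1}^n\frac{1}{p_i}$ (permitted by the hypotheses but excluded from Definition \ref{def2.10}): there $\mathcal{M}_{\vec{p}}^{p_0}=L^{\vec{p}}$ by Remark \ref{re2.9}(\romannumeral1), and the duality needed is the Benedek--Panzone identity $(L^{\vec{p}})'=L^{\vec{p}\,'}$ rather than Theorem \ref{KDth}, so that $\mathcal{B}_{\vec{p}\,'}^{p'_0}$ must be read as $L^{\vec{p}\,'}$ in that case; once these measure-theoretic justifications are in place, the estimate itself is immediate and the rest of the argument is unaffected.
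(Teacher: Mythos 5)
Your proposal is correct and follows essentially the same route as the paper's proof: both represent the block-space norm through the K\"othe duality of Theorem \ref{KDth} (via Definition \ref{def2.3} and Remark \ref{re2.4}), move $I_{\alpha}$, respectively $[b,I_{\alpha}]$, onto the test function $g$ in the unit ball of $\mathcal{M}_{\vec{p}}^{p_0}(\mathbb{R}^n)$ using (\ref{eq1.1}), respectively (\ref{eq1.2}), and then conclude by H\"older's inequality together with the assumed Morrey-to-Morrey boundedness. Your two additional points --- justifying the Fubini interchange by truncation, and noting that in the boundary case $\frac{n}{p_0}=\sum_{i=1}^n\frac{1}{p_i}$ the block space must be read as $L^{\vec{p}\,'}(\mathbb{R}^n)$ with the Benedek--Panzone duality replacing Theorem \ref{KDth} --- are refinements of rigor that the paper passes over silently, not a change of method.
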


We show some lemmas before our proof. \cite[Lemma 4.7]{CWYZ2020} implies Lemma \ref{ELth}. The \cite[Lemma 2.13]{TYYZ2021} shows Lemma \ref{EIth}. Lemma \ref{SMIth1} shows the sharp maximal theorem on weighted Lebesgue spaces \cite[Theorem 3.4.5]{Gr2009}. Lemma \ref{le6.2} can be refered to \cite[Theorem 1.3]{31} and \cite[Lemma 4.2]{21}. Note that $f$ is locally integrable function in Lemma \ref{le6.2}. Lemma \ref{le6.1} can be found in \cite[Corollary 5.3]{30}.
\begin{lemma}\label{ELth}
Let $X$ be a ball quasi-Banach function space. If there exists $s\in(1,\infty)$ such that $M$ is bounded on $(X^{\frac{1}{s}})'$, then there exists $\varepsilon\in(0,1)$ such that $X$ is continuously embedded into $L^{s}_{\omega}(\mathbb{R}^n)$ with $\omega:=[M(\chi_{Q(0,1)})]^{\varepsilon}\in A_1(\mathbb{R}^n)$, namely, there exists a positive constant $C$ such that, for any $f\in X$,
$$\|f\|_{L_{\omega}^s(\mathbb{R}^n)}\le C\|f\|_{X}.$$
\end{lemma}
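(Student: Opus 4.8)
The plan is to reduce the weighted inequality to a single membership statement for the weight $\omega$, and then feed in the boundedness of $M$ on $(X^{1/s})'$. First I would invoke the convexification of Definition \ref{def2.5}: since $\|f\|_{X}^{s}=\big\||f|^{s}\big\|_{X^{1/s}}$, the target estimate $\|f\|_{L^{s}_{\omega}(\mathbb{R}^{n})}\le C\|f\|_{X}$ is equivalent to
\[
\int_{\mathbb{R}^{n}}|f(x)|^{s}\,\omega(x)\,dx\le C\,\big\||f|^{s}\big\|_{X^{1/s}} .
\]
Reading the left-hand side as the pairing of $|f|^{s}\in X^{1/s}$ against $\omega$ and applying H\"older's inequality (Remark \ref{re2.4}(\romannumeral2)) for $X^{1/s}$ and its associate space, the whole assertion collapses to the single claim that $\omega=[M(\chi_{Q(0,1)})]^{\varepsilon}\in (X^{1/s})'$, in which case the embedding constant is simply $\|\omega\|_{(X^{1/s})'}^{1/s}$.

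Next I would treat the two properties of $\omega$ independently. That $\omega\in A_{1}(\mathbb{R}^{n})$ for every $\varepsilon\in(0,1)$ is the classical Coifman--Rochberg theorem (a fractional power, with exponent below one, of a maximal function is an $A_{1}$ weight), and needs no information about $X$. To place $\omega$ in $Y:=(X^{1/s})'$ I would use that $Y$, being an associate space, is itself a ball Banach function space, so $\chi_{Q(0,1)}\in Y$ by Definition \ref{def2.2}(\romannumeral3); then I would run the Rubio de Francia iteration
\[
\mathcal{R}h:=\sum_{k=0}^{\infty}\frac{M^{k}h}{\big(2\,\|M\|_{Y\to Y}\big)^{k}}
\]
on $h=\chi_{Q(0,1)}$. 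This produces $\mathcal{R}\chi_{Q(0,1)}\in Y$ with $\|\mathcal{R}\chi_{Q(0,1)}\|_{Y}\le 2\|\chi_{Q(0,1)}\|_{Y}<\infty$, which is an $A_{1}$ weight and, because each iterate $M^{k}\chi_{Q(0,1)}$ fattens the tail, decays only like $|x|^{-n+\delta}$ with $\delta\sim\|M\|_{Y\to Y}^{-1}>0$. Since $\omega(x)\sim(1+|x|)^{-n\varepsilon}$, choosing $\varepsilon\in(1-\delta/n,\,1)$ yields $\omega\lesssim \mathcal{R}\chi_{Q(0,1)}$ pointwise, whence $\omega\in Y$ by the lattice property Definition \ref{def2.2}(\romannumeral1). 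This also explains the shape of the conclusion: the weight is forced to be a fractional power of a maximal function and $\varepsilon$ must be chosen close to $1$.

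The hard part will be the quantitative domination $\omega\lesssim \mathcal{R}\chi_{Q(0,1)}$. It rests on understanding how the successive maximal iterates spread out (heuristically $M^{k}\chi_{Q(0,1)}(x)\sim|x|^{-n}(\log|x|)^{k-1}/(k-1)!$ at infinity), so that the damped series $\mathcal{R}\chi_{Q(0,1)}$ improves the decay from $|x|^{-n}$ to $|x|^{-n+\delta}$; matching this rate against the explicit $|x|^{-n\varepsilon}$ of $\omega$ is exactly what forces $\varepsilon$ near $1$ and is the only step where the size of $\|M\|_{Y\to Y}$ enters. Everything else — the convexification identity, H\"older's inequality, and the Coifman--Rochberg $A_{1}$ bound — is routine. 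An alternative to $\mathcal{R}$ is to split $\mathbb{R}^{n}$ into the dyadic annuli $Q(0,2^{k+1})\setminus Q(0,2^{k})$, on which $\omega\sim 2^{-kn\varepsilon}$, and to estimate $\|\omega\|_{Y}\lesssim\sum_{k}2^{-kn\varepsilon}\|\chi_{Q(0,2^{k+1})}\|_{Y}$; here the boundedness of $M$ on $Y$ is again what keeps the characteristic-function norms growing slower than $2^{kn\varepsilon}$ once $\varepsilon$ is close to $1$, so the series converges.
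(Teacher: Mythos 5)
Your argument is correct in substance, and note first that the paper itself contains no proof of Lemma \ref{ELth}: it is quoted verbatim from \cite[Lemma 4.7]{CWYZ2020}, so the comparison must be with that proof. Your reduction --- $\int_{\mathbb{R}^n}|f(x)|^{s}\omega(x)\,dx\le \||f|^{s}\|_{X^{1/s}}\|\omega\|_{(X^{1/s})'}=\|f\|_{X}^{s}\|\omega\|_{(X^{1/s})'}$ via Definition \ref{def2.5} and Remark \ref{re2.4}(\romannumeral2), together with the Coifman--Rochberg theorem for the $A_{1}$ claim --- is exactly the skeleton of the standard argument, as is the introduction of the Rubio de Francia majorant $\mathcal{R}\chi_{Q(0,1)}\in Y:=(X^{1/s})'$ with $\|\mathcal{R}\chi_{Q(0,1)}\|_{Y}\le 2\|\chi_{Q(0,1)}\|_{Y}$. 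Where you genuinely deviate is in the pointwise domination $\omega\lesssim\mathcal{R}\chi_{Q(0,1)}$: you extract it from the tail asymptotics $M^{k}\chi_{Q(0,1)}(x)\gtrsim c^{k}|x|^{-n}(\log|x|)^{k-1}/(k-1)!$, summed against $(2\|M\|_{Y\to Y})^{-k}$ to produce the lower bound $|x|^{-n+\delta}$ with $\delta\sim\|M\|_{Y\to Y}^{-1}$; this can indeed be made rigorous by induction, and your choice $\varepsilon\in(1-\delta/n,1)$ then closes the argument, but it is computational and tied to the specific function $\chi_{Q(0,1)}$. The literature route is softer: $w:=\mathcal{R}\chi_{Q(0,1)}$ is an $A_{1}$ weight with $[w]_{A_{1}}\le 2\|M\|_{Y\to Y}$, so by the reverse H\"older inequality $w^{q}\in A_{1}$ for some $q=q(\|M\|_{Y\to Y})>1$; since $\chi_{Q(0,1)}\le w^{q}$ pointwise, one gets $[M\chi_{Q(0,1)}]^{1/q}\le [M(w^{q})]^{1/q}\le C\,w\in Y$, i.e.\ $\omega\in Y$ with $\varepsilon:=1/q$, with no asymptotics of iterated maximal functions at all. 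Amusingly, in your version the $A_{1}$ property of $\mathcal{R}\chi_{Q(0,1)}$ is never used (only its $Y$-norm and its lower tail), whereas in the reverse-H\"older version it is the whole point; both yield the same quantitative dependence $1-\varepsilon\sim\|M\|_{Y\to Y}^{-1}$.

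One caution about your closing ``alternative'': as stated, the dyadic-annuli estimate is under-justified. A single application of the boundedness of $M$ on $Y$ only gives $\|\chi_{Q(0,2^{k+1})}\|_{Y}\lesssim 2^{kn}\|\chi_{Q(0,1)}\|_{Y}$ (from $\chi_{Q(0,2^{k+1})}\lesssim 2^{kn}M\chi_{Q(0,1)}$), and $\sum_{k}2^{-kn\varepsilon}2^{kn}$ diverges for every $\varepsilon<1$. To beat the exponent $2^{kn}$ you must again invoke the full iteration (or reverse H\"older), e.g.\ $\chi_{Q(0,2^{k+1})}\lesssim 2^{k(n-\delta)}\mathcal{R}\chi_{Q(0,1)}$, so this sketch is not an independent proof but a repackaging of the same key step; it would need that estimate spelled out to stand on its own.
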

\begin{lemma}\label{EIth}
Let $X$ be a ball quasi-Banach function spaces and $s\in(0,\infty)$. Let $\mathcal{F}$ be the set of all pairs of nonnegative measurable functions $(F,G)$ such that, for any given $\omega\in A_1(\mathbb{R}^n)$,
$$\int_{\mathbb{R}^{n_1}\times\mathbb{R}^{n_2}}\left(F(x,y)\right)^{s}\omega(x,y)dxdy\le C_{(s,[\omega]_{A_1(\mathbb{R}^n)})}\int_{\mathbb{R}^{n_1}\times\mathbb{R}^{n_2}}\left(G(x,y)\right)^{s}\omega(x,y)dxdy.$$
where $C_{(s,[\omega]_{A_1(\mathbb{R}^n)})}$ is a positive constant independent of $(F,G)$, but dependents on $s$ and $A_1(\mathbb{R}^n)$. Assume that there exists a $s_0\in[s,\infty)$ such that $X^{\frac{1}{s_0}}$ is ball Banach function space and $Mf$ is bounded on $(X^{\frac{1}{s_0}})'$. Then there exists a positive constant $C_0$ such that, for any $(F,G)\in\mathcal{F}$,
$$\|F\|_{X}\le C_0\|G\|_{X}.$$
\end{lemma}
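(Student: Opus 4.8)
The plan is to prove this by a Rubio de Francia extrapolation argument, transferring the $A_1$-weighted $L^s$ estimates built into the family $\mathcal{F}$ onto the ball quasi-Banach function space $X$. The first move is to pass to the Banach range by convexification. Setting $Y:=X^{1/s_0}$, Definition \ref{def2.5} gives $\|F\|_X=\||F|^{s_0}\|_Y^{1/s_0}$ for every $F$, so the target inequality $\|F\|_X\le C_0\|G\|_X$ is equivalent to $\||F|^{s_0}\|_Y\le C_0^{s_0}\||G|^{s_0}\|_Y$. Since $Y$ is assumed to be a ball Banach function space, it coincides with its second associate space (Remark \ref{re2.4}(iii)), and hence its norm admits the dual representation
$$\||F|^{s_0}\|_Y=\sup\left\{\int_{\mathbb{R}^n}|F(x)|^{s_0}w(x)\,dx:\ 0\le w,\ \|w\|_{Y'}\le 1\right\}.$$
It therefore suffices to bound $\int_{\mathbb{R}^n}|F|^{s_0}w$ by $C\,\||G|^{s_0}\|_Y$, uniformly over nonnegative $w$ with $\|w\|_{Y'}\le 1$.

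Fix such a $w$. Because $M$ is bounded on $Y'=(X^{1/s_0})'$, write $B:=\|M\|_{Y'\to Y'}$ and define the Rubio de Francia operator
$$\mathcal{R}w:=\sum_{k=0}^{\infty}\frac{M^{k}w}{(2B)^{k}},$$
with $M^{0}w=w$ and $M^{k}$ the $k$-fold iterate. The geometric series converges in $Y'$, and $\omega:=\mathcal{R}w$ enjoys the three standard properties: (a) $w\le\omega$ almost everywhere; (b) $\|\omega\|_{Y'}\le 2\|w\|_{Y'}\le 2$; and (c) $M\omega\le 2B\,\omega$, so that $\omega\in A_1(\mathbb{R}^n)$ with $[\omega]_{A_1(\mathbb{R}^n)}\le 2B$. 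Property (a) yields $\int|F|^{s_0}w\le\int|F|^{s_0}\omega$, while (c) certifies that $\omega$ is an admissible weight for the hypothesis imposed on $\mathcal{F}$.

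It remains to bridge the exponents $s\le s_0$ and apply the weighted estimate. The hypothesis, read for the pair $(|F|^{s},|G|^{s})$, is exactly an $L^{1}(A_1)$ bound: $\int|F|^{s}u\le C_{(s,[u]_{A_1})}\int|G|^{s}u$ for all $u\in A_1(\mathbb{R}^n)$. By the classical Rubio de Francia (Cruz-Uribe, Martell and P\'erez) extrapolation, this $L^1(A_1)$ bound self-improves to $\int H^{r}v\le C\int K^{r}v$ for every $r\in[1,\infty)$ and every $v\in A_r(\mathbb{R}^n)$, where $H=|F|^{s}$, $K=|G|^{s}$. Taking $r=s_0/s\ge 1$ and $v=\omega\in A_1(\mathbb{R}^n)\subset A_{s_0/s}(\mathbb{R}^n)$ gives $\int|F|^{s_0}\omega\le C\int|G|^{s_0}\omega$, with constant controlled by $[\omega]_{A_1}\le 2B$. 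Finally, the associate-space H\"older inequality (Remark \ref{re2.4}(ii)) together with (b) bounds $\int|G|^{s_0}\omega\le\||G|^{s_0}\|_Y\|\omega\|_{Y'}\le 2\||G|^{s_0}\|_Y$. Chaining these estimates and taking the supremum over $w$ delivers $\||F|^{s_0}\|_Y\le C\,\||G|^{s_0}\|_Y$, which is the assertion after raising to the power $1/s_0$.

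The main obstacle is the need to manufacture a single weight $\omega$ that simultaneously dominates the dual function $w$, keeps its $Y'$-norm controlled, and lies in $A_1(\mathbb{R}^n)$ with bounded characteristic: all three are required at once, and the Rubio de Francia iteration is precisely engineered to achieve (a)--(c) together, which is why the hypothesis that $M$ be bounded on $(X^{1/s_0})'$ is indispensable. The secondary difficulty is reconciling the fixed exponent $s$ of the weighted estimates with the convexification power $s_0$; passing to the Banach range forces the integrand $|F|^{s_0}$, and this gap is closed by the $A_1$ up-extrapolation invoked above (one may instead supply it by hand through a second Rubio de Francia iteration with respect to the doubling measure $\omega\,dx$). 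One should also verify that the series defining $\mathcal{R}w$ converges and that $M^{k}w$ is finite almost everywhere, both guaranteed by $B<\infty$; the product structure $\mathbb{R}^{n_1}\times\mathbb{R}^{n_2}$ is inessential and only fixes the ambient Euclidean space.
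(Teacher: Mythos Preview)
The paper does not supply a proof of this lemma: it is quoted verbatim from \cite[Lemma~2.13]{TYYZ2021} and used as a black box. Your argument is the standard Rubio de Francia extrapolation proof for ball Banach function spaces, and it is essentially the argument given in that reference: convexify to the Banach range $Y=X^{1/s_0}$, dualize via $Y=Y''$, build an $A_1$ weight dominating the test function $w$ by iterating $M$ on $Y'$, and then invoke the weighted hypothesis. The bridge from exponent $s$ to $s_0$ via $A_1$-extrapolation (your second Rubio de Francia iteration with respect to $\omega\,dx$, using that $M_\omega H\le CH$ and $\omega\in A_1$ together force $H\omega\in A_1$) is exactly the right mechanism and is carried out correctly. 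So your proof is sound and matches the approach of the cited source; there is nothing in the present paper to compare it against beyond the citation.
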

\begin{lemma}\label{SMIth1}
Let $1<s<\infty$, $\omega\in A_s$. Then for any $f\in L_\omega^s(\R^n)$,
$$\int_{\mathbb{R}^n}\left(Mf(x)\right)^s\omega(x)dx\le C\int_{\mathbb{R}^n}\left(M^{\sharp}f(x)\right)^s\omega(x)dx$$
holds.
\end{lemma}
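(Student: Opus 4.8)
The plan is to deduce this weighted norm inequality from the classical good-$\lambda$ inequality between $M$ and $M^\sharp$, combined with the $A_\infty$ property of the weight $\omega$. The three ingredients are: a distributional comparison of the Hardy--Littlewood maximal function with the sharp maximal function via a Calder\'on--Zygmund stopping-time argument; the fact that an $A_s(\R^n)\subset A_\infty(\R^n)$ weight satisfies an $\epsilon$-$\delta$ comparison between Lebesgue measure and $\omega$-measure on cubes; and the layer-cake representation of the weighted $L^s$ norm together with an absorption argument. Throughout I would first assume $f$ is such that the left-hand side is a priori finite (for instance $f$ bounded with compact support, or else truncating $M_d f$ at a level $N$ and letting $N\to\infty$ at the end by monotone convergence), so that the absorption step below is legitimate; removing this hypothesis is the only genuinely technical point.

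First I would establish the unweighted good-$\lambda$ inequality. Replacing $M$ by the equivalent dyadic maximal operator $M_d$ and performing a Calder\'on--Zygmund decomposition of the open set $\{M_d f>\lambda\}$ into maximal dyadic cubes $\{Q_j\}$, one shows on each $Q_j$ that if $Q_j$ meets $\{M^\sharp f\le\gamma\lambda\}$ then the portion of $Q_j$ on which $M_d f>2\lambda$ is controlled by a constant multiple of $\gamma\,|Q_j|$. Summing over $j$ yields
\begin{equation*}
\bigl|\{x:M_d f(x)>2\lambda,\ M^\sharp f(x)\le\gamma\lambda\}\bigr|\le C\gamma\,\bigl|\{x:M_d f(x)>\lambda\}\bigr|
\end{equation*}
for all $\lambda>0$ and all small $\gamma>0$, with $C=C(n)$. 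Since $\omega\in A_s(\R^n)\subset A_\infty(\R^n)$, there are constants $C_0>0$ and $\delta\in(0,1)$, depending only on $[\omega]_{A_s}$, so that $\omega(E)/\omega(Q)\le C_0(|E|/|Q|)^{\delta}$ for every cube $Q$ and every measurable $E\subset Q$. Applying this on each $Q_j$ and summing upgrades the estimate to its weighted form
\begin{equation*}
\omega\bigl(\{M_d f>2\lambda,\ M^\sharp f\le\gamma\lambda\}\bigr)\le C_0(C\gamma)^{\delta}\,\omega\bigl(\{M_d f>\lambda\}\bigr).
\end{equation*}

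Finally I would integrate against $s\lambda^{s-1}\,d\lambda$. Splitting $\{M_d f>2\lambda\}$ into $\{M_d f>2\lambda,\ M^\sharp f\le\gamma\lambda\}$ and $\{M^\sharp f>\gamma\lambda\}$, bounding the first piece by the weighted good-$\lambda$ inequality and the second by $\omega(\{M^\sharp f>\gamma\lambda\})$, and using the distribution-function identity together with the change of variables $\lambda\mapsto2\lambda$, namely $s\int_0^\infty\lambda^{s-1}\omega(\{M_d f>2\lambda\})\,d\lambda=2^{-s}\int_{\R^n}(M_d f)^s\omega\,dx$, I obtain
\begin{equation*}
2^{-s}\int_{\R^n}(M_d f)^s\omega\,dx\le C_0(C\gamma)^{\delta}\int_{\R^n}(M_d f)^s\omega\,dx+\gamma^{-s}\int_{\R^n}(M^\sharp f)^s\omega\,dx.
\end{equation*}
Choosing $\gamma$ small enough that $2^sC_0(C\gamma)^{\delta}<\tfrac12$ lets me absorb the first term on the right into the left-hand side; since $Mf\lesssim M_d f$ up to a dimensional constant, this produces the asserted bound for $Mf$.

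The main obstacle is precisely the legitimacy of this absorption: the quantity $\int_{\R^n}(M_d f)^s\omega\,dx$ must be known to be finite before it can be subtracted from both sides. I would handle this by first proving the inequality with $M_d f$ replaced by its truncation $(M_d f)\wedge N$ — for which finiteness on bounded sets is automatic while the good-$\lambda$ inequality persists — and then letting $N\to\infty$ by monotone convergence, arranging that the final constant depends only on $s$ and $[\omega]_{A_s}$ and is independent of $N$ and $f$.
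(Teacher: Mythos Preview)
The paper does not supply its own proof of this lemma; it simply cites Theorem~3.4.5 of Grafakos, \emph{Modern Fourier Analysis}. Your sketch is precisely the classical good-$\lambda$ argument that appears there, so in that sense your approach and the paper's coincide.

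There is one slip in your last line: the pointwise bound $Mf\lesssim M_d f$ is false. On $\R$, take $f=\chi_{[0,1)}$ and evaluate at $x=-\epsilon$ for small $\epsilon>0$; every dyadic interval containing $-\epsilon$ lies in $(-\infty,0)$, so $M_d f(-\epsilon)=0$, while $Mf(-\epsilon)\ge(1+\epsilon)^{-1}$. The trivial inequality runs the other way, $M_d f\le Mf$, which does not help here. The standard repair is either to run the good-$\lambda$ inequality directly for the uncentered $M$ by replacing the maximal dyadic cubes with a Whitney decomposition of $\{Mf>\lambda\}$, or to prove the dyadic estimate for each of finitely many shifted dyadic grids $\mathcal{D}^{(k)}$ and then invoke the pointwise domination $Mf\le C_n\max_k M_{d}^{(k)}f$ coming from the one-third trick. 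Either fix keeps the constant depending only on $n$, $s$, and $[\omega]_{A_s}$; the rest of your argument, including the truncation device to justify the absorption step, goes through unchanged.
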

\begin{remark}
A similar result in \cite[Corollary 3.10]{N2019(2)} requires
\begin{align}\label{Ceq}
Mf\in\mathcal{M}_{\vec{s}}^{s_0}(\R^n)
\end{align}
for some $0<s_0<\infty$ and $\vec{s}=(s_1,s_2,\cdots,s_n)$ with $\frac{n}{s_0}<\sum_{i=1}^{n}\frac{1}{s_i}.$
From Lemma \ref{ELth}, (\ref{Ceq}) means
$$f\in L_{\omega}^s(\mathbb{R}^n)$$
for $1<s<\infty$ and $\omega\in A_1(\mathbb{R}^n)$.
\end{remark}
\begin{lemma}\label{le6.2}
Let $0<\alpha<n$, $1<r<\infty$ and $b\in BMO(\mathbb{R}^n)$. Then for any locally integrable function $f$, there exists a constant $C>0$ independent of $b$ and $f$ such that
$$M^{\sharp}([b,I_{\alpha}](f))(x)\le C\|b\|_{BMO(\mathbb{R}^n)}\left(I_{\alpha}(|f|)(x)+I_{r\alpha}(|f|^r)(x)^{\frac{1}{r}}\right).$$
\end{lemma}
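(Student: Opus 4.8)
The plan is to bound, for an arbitrary cube $Q\ni x$ with side length $\ell$, the mean oscillation $\frac{1}{|Q|}\int_Q|[b,I_\alpha]f(y)-c|\,dy$ for a well-chosen constant $c$; since $M^{\sharp}g(x)\le 2\sup_{Q\ni x}\inf_c\frac{1}{|Q|}\int_Q|g-c|\,dy$, taking the supremum over $Q\ni x$ will then yield the claim. Writing $b_Q$ for the average of $b$ over $Q$ and splitting $f=f_1+f_2$ with $f_1=f\chi_{2Q}$, I use the identity
$$[b,I_\alpha]f(y)=(b(y)-b_Q)I_\alpha f(y)-I_\alpha((b-b_Q)f_1)(y)-I_\alpha((b-b_Q)f_2)(y)=:\mathrm{I}(y)+\mathrm{II}(y)+\mathrm{III}(y),$$
and take $c=I_\alpha((b-b_Q)f_2)(x)$ so that the constant is absorbed into the third term. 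Throughout I use the natural restriction $r\alpha<n$ that makes $I_{r\alpha}$ meaningful, the John--Nirenberg inequality in the form $(\frac{1}{|Q|}\int_Q|b-b_Q|^t)^{1/t}\lesssim\|b\|_{BMO(\mathbb{R}^n)}$ for every $t\in[1,\infty)$, and the telescoping bound $|b_{2^{k+1}Q}-b_Q|\lesssim (k+1)\|b\|_{BMO(\mathbb{R}^n)}$.

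For $\mathrm{I}$, Hölder's inequality with exponents $r',r$ together with John--Nirenberg gives $\frac{1}{|Q|}\int_Q|\mathrm{I}|\lesssim\|b\|_{BMO(\mathbb{R}^n)}(\frac{1}{|Q|}\int_Q|I_\alpha f|^r)^{1/r}$, and I then split $I_\alpha f=I_\alpha f_1+I_\alpha f_2$. For the far piece, $|y-z|\sim|x-z|$ whenever $y\in Q$ and $z\notin 2Q$, so $|I_\alpha f_2(y)|\lesssim I_\alpha(|f|)(x)$ uniformly in $y\in Q$. For the local piece I invoke the Hardy--Littlewood--Sobolev bound $I_\alpha\colon L^r\to L^s$ with $\frac1s=\frac1r-\frac\alpha n$ (legitimate since $r<n/\alpha$), followed by Hölder on $Q$; this produces $\ell^{-n/s}(\int_{2Q}|f|^r)^{1/r}$, and the elementary comparison $\int_{2Q}|f|^r\le (C\ell)^{n-r\alpha}I_{r\alpha}(|f|^r)(x)$ (valid because $|x-z|\lesssim\ell$ on $2Q$) collapses the powers of $\ell$ and leaves exactly $(I_{r\alpha}(|f|^r)(x))^{1/r}$. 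The term $\mathrm{II}$ is treated similarly: choosing an auxiliary exponent $p_1\in(1,r)$, I apply $I_\alpha\colon L^{p_1}\to L^{s_1}$, Hölder on $Q$, and then factor $(b-b_Q)f$ by Hölder with $\frac{1}{p_1}=\frac1v+\frac1r$; John--Nirenberg controls $\|b-b_Q\|_{L^v(2Q)}$, and the same $\ell$-bookkeeping yields $\|b\|_{BMO(\mathbb{R}^n)}(I_{r\alpha}(|f|^r)(x))^{1/r}$.

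The main work is the far term $\mathrm{III}(y)-c=I_\alpha((b-b_Q)f_2)(y)-I_\alpha((b-b_Q)f_2)(x)$, where I exploit the smoothness of the Riesz kernel: for $x,y\in Q$ and $z\notin 2Q$ one has $\bigl|\,|y-z|^{-(n-\alpha)}-|x-z|^{-(n-\alpha)}\,\bigr|\lesssim \ell\,|x-z|^{-(n-\alpha+1)}$. Decomposing $(2Q)^c=\bigcup_{k\ge1}(2^{k+1}Q\setminus 2^kQ)$ with $|x-z|\sim 2^k\ell$ on each annulus, applying Hölder with exponents $r',r$, and then John--Nirenberg together with the telescoping bound, the $k$-th annulus contributes $\lesssim (k+1)2^{-k}\|b\|_{BMO(\mathbb{R}^n)}(I_{r\alpha}(|f|^r)(x))^{1/r}$ after the comparison $\int_{2^{k+1}Q}|f|^r\lesssim (2^{k+1}\ell)^{n-r\alpha}I_{r\alpha}(|f|^r)(x)$; the powers of $2^k\ell$ cancel to leave the geometrically summable factor $2^{-k}$, and $\sum_k(k+1)2^{-k}<\infty$. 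Summing the three contributions and taking the supremum over cubes $Q\ni x$ gives the asserted pointwise inequality. I expect the delicate point to be precisely this bookkeeping in $\mathrm{III}$: one must verify that the powers of $2^k\ell$ coming from the kernel difference, from the John--Nirenberg factor $|2^{k+1}Q|^{1/r'}$, and from the comparison with $I_{r\alpha}(|f|^r)(x)$ conspire to a summable series, and that the logarithmic growth $(k+1)$ from the $BMO$ telescoping does not destroy convergence.
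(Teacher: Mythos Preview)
The paper does not give its own proof of this lemma; it merely cites \cite[Theorem 1.3]{31} (Cruz-Uribe--Fiorenza) and \cite[Lemma 4.2]{21} (Shirai). Your argument is correct and is exactly the standard proof found in those references: the decomposition $[b,I_\alpha]f=(b-b_Q)I_\alpha f-I_\alpha((b-b_Q)f_1)-I_\alpha((b-b_Q)f_2)$ with $c=I_\alpha((b-b_Q)f_2)(x)$, Hardy--Littlewood--Sobolev for the local pieces, and the annular decomposition together with the kernel-smoothness estimate and the $BMO$ telescoping bound for the far piece; the power counting you flag in term $\mathrm{III}$ indeed collapses to the summable factor $(k+1)2^{-k}$.
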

\begin{lemma}\label{le6.1}
Let $0<\alpha<n,~1<p_0,q_0,\vec{p},\vec{q}<\infty$, $\frac{n}{p_0}\le\sum_{i=1}^n\frac{1}{p_i}$ and $\frac{n}{q_0}\le\sum_{i=1}^n\frac{1}{q_i}$. If
$$1<\vec{p}\le\vec{q}<\infty~ \text{and}~\alpha=\frac{n}{p_0}-\frac{n}{q_0}=\sum_{i=1}^n\frac{1}{p_i}-\sum_{i=1}^n\frac{1}{q_i},$$
then there exists a positive $C$ such that
$$\|I_{\alpha}f\|_{\mathcal{M}_{\vec{q}}^{q_0}(\mathbb{R}^n)}\le C\|f\|_{\mathcal{M}_{\vec{p}}^{p_0}(\mathbb{R}^n)}.$$
\end{lemma}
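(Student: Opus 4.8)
The plan is to prove the inequality by the classical Spanne-type decomposition, splitting the input relative to the cube over which the mixed Morrey norm of the output is tested; the only genuinely analytic ingredient is the mixed-norm Hardy--Littlewood--Sobolev inequality, which here plays the role that $I_\alpha\colon L^p\to L^q$ plays in the scalar Spanne theorem. Concretely, I will use that, under $1<\vec p\le\vec q<\infty$ and $\sum_{i=1}^n(\frac1{p_i}-\frac1{q_i})=\alpha$, the operator $I_\alpha$ is bounded from $L^{\vec p}(\mathbb R^n)$ into $L^{\vec q}(\mathbb R^n)$. This exponent relation is exactly the one forced by the dilation invariance of $I_\alpha$ on mixed-norm spaces; moreover, in the borderline case $\frac1{p_0}=\frac1n\sum\frac1{p_i}$ and $\frac1{q_0}=\frac1n\sum\frac1{q_i}$ the mixed Morrey spaces collapse to $L^{\vec p}$ and $L^{\vec q}$ by Remark \ref{re2.9}(i), so this inequality is precisely Lemma \ref{le6.1} in that case; in general I take it as the known mixed-norm fractional integral estimate (cf.\ \cite{30}).

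Fix a cube $Q=Q(x_0,r)$ and write $f=f_1+f_2$ with $f_1:=f\chi_{2Q}$ and $f_2:=f-f_1$. By Definition \ref{def2.8} it suffices to bound $|Q|^{\frac1{q_0}-\frac1n\sum_{i=1}^n\frac1{q_i}}\|(I_\alpha f)\chi_Q\|_{L^{\vec q}}$ by a constant multiple of $\|f\|_{\mathcal M_{\vec p}^{p_0}}$ uniformly in $Q$, and by subadditivity I treat $I_\alpha f_1$ and $I_\alpha f_2$ separately. For the local part, the mixed-norm Hardy--Littlewood--Sobolev inequality and the definition of the Morrey norm give $\|(I_\alpha f_1)\chi_Q\|_{L^{\vec q}}\le\|I_\alpha f_1\|_{L^{\vec q}}\lesssim\|f\chi_{2Q}\|_{L^{\vec p}}\lesssim|Q|^{\frac1n\sum\frac1{p_i}-\frac1{p_0}}\|f\|_{\mathcal M_{\vec p}^{p_0}}$. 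Multiplying by $|Q|^{\frac1{q_0}-\frac1n\sum\frac1{q_i}}$, the total power of $|Q|$ is $(\frac1{q_0}-\frac1{p_0})+\frac1n(\sum\frac1{p_i}-\sum\frac1{q_i})=-\frac\alpha n+\frac\alpha n=0$, using the two standing hypotheses $\frac1{p_0}-\frac1{q_0}=\frac\alpha n$ and $\sum(\frac1{p_i}-\frac1{q_i})=\alpha$; hence the local contribution is $\lesssim\|f\|_{\mathcal M_{\vec p}^{p_0}}$.

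For the global part fix $x\in Q$. Since $|x-y|\gtrsim|x_0-y|$ for $y\in(2Q)^c$, decomposing $(2Q)^c$ into the dyadic annuli $\{2^kr\le|x_0-y|<2^{k+1}r\}$ and applying Remark \ref{re2.9}(d) on a cube of side comparable to $2^kr$ yields
\begin{align*}
|I_\alpha f_2(x)|
&\lesssim\sum_{k=1}^\infty(2^kr)^{-(n-\alpha)}\int_{|x_0-y|<2^{k+1}r}|f(y)|\,dy\\
&\lesssim\sum_{k=1}^\infty(2^kr)^{\alpha-\frac n{p_0}}\|f\|_{\mathcal M_{\vec p}^{p_0}}
\lesssim r^{-\frac n{q_0}}\|f\|_{\mathcal M_{\vec p}^{p_0}},
\end{align*}
where the series converges because $\alpha-\frac n{p_0}=-\frac n{q_0}<0$. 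This bound is independent of $x\in Q$, so $\|(I_\alpha f_2)\chi_Q\|_{L^{\vec q}}\lesssim r^{-\frac n{q_0}}\|f\|_{\mathcal M_{\vec p}^{p_0}}\|\chi_Q\|_{L^{\vec q}}=r^{-\frac n{q_0}}|Q|^{\frac1n\sum\frac1{q_i}}\|f\|_{\mathcal M_{\vec p}^{p_0}}$, and multiplying by $|Q|^{\frac1{q_0}-\frac1n\sum\frac1{q_i}}=r^{\frac n{q_0}-\sum\frac1{q_i}}$ once more leaves the net power $r^0=1$. Thus the global contribution is also $\lesssim\|f\|_{\mathcal M_{\vec p}^{p_0}}$, and taking the supremum over all cubes $Q$ gives $\|I_\alpha f\|_{\mathcal M_{\vec q}^{q_0}}\lesssim\|f\|_{\mathcal M_{\vec p}^{p_0}}$.

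I expect the main obstacle to be the mixed-norm Hardy--Littlewood--Sobolev inequality used in the local step. Unlike the scalar Adams/Hedberg route, a pointwise Hedberg bound of the form $|I_\alpha f|\lesssim\|f\|_{\mathcal M_{\vec p}^{p_0}}^{1-\tau}(Mf)^{\tau}$ followed by $L^{\vec q}$-integration would, through the convexification identity $\|(Mf)^\tau\|_{L^{\vec q}}=\|Mf\|_{L^{\tau\vec q}}^{\tau}$, force $\tau\vec q=\vec p$, i.e.\ the proportionality $\frac{\vec p}{p_0}=\frac{\vec q}{q_0}$; this is exactly the hypothesis that Remark \ref{re6.4} emphasizes we do not wish to assume. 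Consequently the off-diagonal mixed-norm smoothing of $I_\alpha$ must be obtained directly rather than routed through a single maximal function, and once it is in hand everything else reduces to the two exponent cancellations carried out above.
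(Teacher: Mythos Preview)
The paper does not prove this lemma at all: it is simply quoted from \cite[Corollary 5.3]{30}, so there is no in-paper argument to compare against. Your proposal, by contrast, supplies a full proof via the standard Spanne--Peetre decomposition $f=f\chi_{2Q}+f\chi_{(2Q)^c}$, handling the local piece with the mixed-norm Hardy--Littlewood--Sobolev bound $I_\alpha\colon L^{\vec p}\to L^{\vec q}$ and the tail by dyadic annuli plus Remark~\ref{re2.9}(d). The exponent bookkeeping in both halves is correct, and the geometric series in the tail converges because $\alpha-n/p_0=-n/q_0<0$; your computation of $\|\chi_Q\|_{L^{\vec q}}$ is also right.

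The only point worth flagging is that your argument does not actually avoid dependence on \cite{30}: the ``off-diagonal'' mixed-norm estimate $\|I_\alpha g\|_{L^{\vec q}}\lesssim\|g\|_{L^{\vec p}}$ under $\vec p\le\vec q$ and $\sum_i(1/p_i-1/q_i)=\alpha$ is exactly the Lebesgue endpoint of the lemma you are proving, and you yourself trace it back to \cite{30}. So what you have genuinely added is the reduction from the Morrey scale to the Lebesgue scale, which is clean and correct; but the analytic core (the local step) still rests on the same external reference the paper invokes. Your closing paragraph is apt: a Hedberg-type pointwise bound would indeed force $\vec p/p_0=\vec q/q_0$, so the $L^{\vec p}\to L^{\vec q}$ bound must come from elsewhere, and that is precisely what \cite{30} supplies.
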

\begin{lemma}\label{SMIth}
Let $1<q_0,\vec{q}<\infty$ and $\frac{n}{q_0}<\sum_{i=1}^{n}\frac{1}{q_i}$. If $f\in L_{\omega}^s(\mathbb{R}^n)$ with $1<s<\infty$ and $\omega\in A_1(\mathbb{R}^n)$, then there exists a positive constant $C$ such that
\begin{equation}\label{eq3.7}
\|Mf\|_{\mathcal{M}_{\vec{q}}^{q_0}(\mathbb{R}^n)}\le C\|M^{\sharp} f\|_{\mathcal{M}_{\vec{q}}^{q_0}(\mathbb{R}^n)}.
\end{equation}
\end{lemma}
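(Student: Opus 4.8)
The plan is to derive the sharp maximal inequality on $\mathcal{M}_{\vec{q}}^{q_0}(\mathbb{R}^n)$ from its weighted analogue (Lemma \ref{SMIth1}) by feeding it into the extrapolation principle of Lemma \ref{EIth}, taking $X=\mathcal{M}_{\vec{q}}^{q_0}(\mathbb{R}^n)$, $F=Mf$ and $G=M^\sharp f$. First I would record that $X$ is a ball Banach function space by Remark \ref{re2.9}(ii), so that the hypotheses of Lemma \ref{EIth} become the two things to check: that the pair $(Mf,M^\sharp f)$ belongs to the class $\mathcal{F}$ for the exponent $s$, and that some $s_0\in[s,\infty)$ makes $X^{1/s_0}$ a ball Banach function space with $M$ bounded on $(X^{1/s_0})'$. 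Once both are in place, Lemma \ref{EIth} gives $\|Mf\|_{\mathcal{M}_{\vec{q}}^{q_0}(\mathbb{R}^n)}=\|F\|_X\le C_0\|G\|_X=C_0\|M^\sharp f\|_{\mathcal{M}_{\vec{q}}^{q_0}(\mathbb{R}^n)}$, which is precisely \eqref{eq3.7}.

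For the membership $(Mf,M^\sharp f)\in\mathcal{F}$, I would use that $A_1(\mathbb{R}^n)\subset A_s(\mathbb{R}^n)$ whenever $1<s<\infty$, so every $\omega\in A_1(\mathbb{R}^n)$ lies in $A_s(\mathbb{R}^n)$. Since $f\in L^s_\omega(\mathbb{R}^n)$ by hypothesis, Lemma \ref{SMIth1} supplies $\int_{\mathbb{R}^n}(Mf)^s\omega\,dx\le C\int_{\mathbb{R}^n}(M^\sharp f)^s\omega\,dx$ with a constant depending only on $s$ and $[\omega]_{A_s(\mathbb{R}^n)}$, which is exactly the defining inequality of $\mathcal{F}$ with exponent $s$.

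For the second hypothesis I would take $s_0=s$ and identify the convexification explicitly. Using the elementary identity $\||g|^{1/s}\chi_Q\|_{L^{\vec{q}}(\mathbb{R}^n)}=\|g\chi_Q\|_{L^{\vec{q}/s}(\mathbb{R}^n)}^{1/s}$ together with Definition \ref{def2.5}, one checks that $X^{1/s}=\mathcal{M}_{\vec{q}/s}^{q_0/s}(\mathbb{R}^n)$, since the weight exponent $s\bigl(\tfrac{1}{q_0}-\tfrac1n\sum_{i=1}^n\tfrac1{q_i}\bigr)$ rescales to $\tfrac{1}{q_0/s}-\tfrac1n\sum_{i=1}^n\tfrac{1}{q_i/s}$. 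In the regime in which the hypothesis $f\in L^s_\omega(\mathbb{R}^n)$ actually originates—namely the one produced by Lemma \ref{ELth}, which forces $s<\min\{q_0,q_1,\dots,q_n\}$—the parameters $q_0/s$ and $\vec{q}/s$ all exceed $1$, so $\mathcal{M}_{\vec{q}/s}^{q_0/s}(\mathbb{R}^n)$ is a ball Banach function space. Moreover the standing assumption $\frac{n}{q_0}<\sum_{i=1}^n\frac1{q_i}$ rescales to $\frac{n}{q_0/s}<\sum_{i=1}^n\frac{1}{q_i/s}$. Hence Theorem \ref{KDth} identifies $(X^{1/s})'$ with the block space $\mathcal{B}_{(\vec{q}/s)'}^{(q_0/s)'}(\mathbb{R}^n)$, and Theorem \ref{th5.1}, applied with parameters $q_0/s$ and $\vec{q}/s$, shows that $M$ is bounded on it, i.e.\ on $(X^{1/s})'$.

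The step I expect to be the main obstacle is making the pair condition $(Mf,M^\sharp f)\in\mathcal{F}$ genuinely uniform over all $\omega\in A_1(\mathbb{R}^n)$ fed into the extrapolation: Lemma \ref{SMIth1} gives the weighted bound, but one must guarantee that the a priori integrability it requires (supplied by $f\in L^s_\omega(\mathbb{R}^n)$, and through Lemma \ref{ELth} by membership in $\mathcal{M}_{\vec{q}}^{q_0}(\mathbb{R}^n)$) holds for each weight appearing internally, so that the inequality is not vacuous. The convexification bookkeeping that lands $(X^{1/s})'$ inside the hypotheses of Theorems \ref{KDth} and \ref{th5.1} is routine but should be carried out carefully, in particular keeping track of the constraint $s<\min\{q_0,q_1,\dots,q_n\}$ that permits the choice $s_0=s$.
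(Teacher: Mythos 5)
Your overall strategy coincides with the paper's: both proofs feed the weighted sharp maximal inequality of Lemma \ref{SMIth1} (available for $A_1(\mathbb{R}^n)$ weights since $A_1(\mathbb{R}^n)\subset A_s(\mathbb{R}^n)$) into the extrapolation Lemma \ref{EIth} with $X=\mathcal{M}_{\vec{q}}^{q_0}(\mathbb{R}^n)$, identify the convexification $X^{1/s_0}=\mathcal{M}_{\vec{q}/s_0}^{q_0/s_0}(\mathbb{R}^n)$, and get the boundedness of $M$ on $(X^{1/s_0})'=\mathcal{B}_{(\vec{q}/s_0)'}^{(q_0/s_0)'}(\mathbb{R}^n)$ from Theorem \ref{KDth} together with Theorem \ref{th5.1}.

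The genuine gap is your choice $s_0=s$. The lemma is stated for \emph{every} $s\in(1,\infty)$, with no relation imposed between $s$ and $\vec{q}$; if $s\ge\min\{q_0,q_1,\dots,q_n\}$, then $q_0/s$ or some component of $\vec{q}/s$ is $\le 1$, the convexification $X^{1/s}$ is only quasi-Banach, and neither Theorem \ref{KDth} nor Theorem \ref{th5.1} applies to it, so your verification of the second hypothesis of Lemma \ref{EIth} collapses. Your attempted rescue --- that the hypothesis $f\in L_{\omega}^s(\mathbb{R}^n)$ ``originates'' from Lemma \ref{ELth}, which forces $s<\min\{q_0,q_1,\dots,q_n\}$ --- is not a proof of the lemma as stated: it establishes only the special case consumed later in Theorem \ref{th6.3}, and it smuggles information from a downstream application into the hypotheses of the lemma. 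The paper avoids this by decoupling the two exponents: it fixes $s_0\in\bigl(1,\min\{s,q_0,q_1,\dots,q_n\}\bigr)$, so that $q_0/s_0>1$ and $\vec{q}/s_0>1$ hold unconditionally, and only then invokes Lemma \ref{EIth}. (To be fair, with $s_0<s$ the paper's own appeal to Lemma \ref{EIth} --- whose statement asks for $s_0\in[s,\infty)$ --- is itself loose, as is the uniformity over all $\omega\in A_1(\mathbb{R}^n)$ of the pair inequality defining $\mathcal{F}$, which you correctly flagged as the main obstacle: the hypothesis $f\in L_{\omega}^s(\mathbb{R}^n)$ is given for a single weight only. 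But these points are shared with the published proof, whereas the constraint $s<\min\{q_0,q_1,\dots,q_n\}$ you impose is an extra hypothesis that the paper's choice of a small $s_0$ is specifically designed to eliminate.)
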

\begin{proof}
For any $s\in(0,\infty)$ and $\omega\in A_1(\R^n)$, if $f\in L_{\omega}^s(\mathbb{R}^n)$, then by Lemma \ref{SMIth1},
$$\int_{\mathbb{R}^n}\left(Mf(x)\right)^s\omega(x)dx\le C_{(s,[\omega]_{A_1(\mathbb{R}^n})}\int_{\mathbb{R}^n}\left(M^{\sharp} f(x)\right)^s\omega(x)dx.$$
Taking $s_0\in(1,\min\{s,q_0,q_1,q_2,\cdots,q_n\})$, we have
$$\left(\mathcal{M}_{\vec{q}}^{q_0}(\R^n)\right)^{\frac{1}{s_0}}=\mathcal{M}_{\vec{q}/s_0}^{q_0/s_0}(\R^n)$$
with $M$ is bounded on $\left(\mathcal{M}_{\vec{q}/s_0}^{q_0/s_0}(\R^n)\right)'=\mathcal{B}_{(\vec{q}/s)\,'}^{(q_0/s_0)'}(\mathbb{R}^n)$.
Thus, by Lemma \ref{EIth},
$$\|Mf\|_{\mathcal{M}_{\vec{q}}^{q_0}(\mathbb{R}^n)}\le C\|M^{\sharp} f\|_{\mathcal{M}_{\vec{q}}^{q_0}(\mathbb{R}^n)},$$
for any $f\in L_{\omega}^s(\mathbb{R}^n)$ with $\omega\in A_1(\R^n)$ and $0<s<\infty$.

This completes the proof of Lemma \ref{SMIth}.
\end{proof}
\begin{proof}[Proof of Theorem \ref{th6.3}]
$(\romannumeral1)\Rightarrow(\romannumeral2)$. Let $b\in BMO(\mathbb{R}^n)$. Let $f\in\M_{\vec{p}}^{p_0}(\R^n)$ and $p=\min\{p_1,p_2,\cdots,p_n\}$. From H\"older's inequality, we have
$$f\in\M_{p}^{p_0}(\R^n).$$
Thus, $[b,I_{\alpha}]f\in\M_{q}^{q_0}(\R^n)$ with $\frac{1}{q}=\frac{1}{p}-\frac{\alpha}{n}$.
According to Lemma \ref{ELth}, there exist $1<s<\infty$ such that
$$[b,I_{\alpha}]f\in L^{s}_{\omega}(\R^n),$$
where $\omega=[M(\chi_{Q(0,1)})]^{\varepsilon}$ with $0<\epsilon<1$. Thus, the assumption of Lemma \ref{SMIth} is satisfied.
We use Lemma \ref{SMIth}, Lemma \ref{le6.2} and Lemma \ref{le6.1} to write
\begin{align*}
\left\|[b,I_{\alpha}]f\right\|_{\mathcal{M}_{\vec{q}}^{q_0}(\mathbb{R}^n)}
&\le C\lf\|M([b,I_{\alpha}](f))\r\|_{\mathcal{M}_{\vec{q}}^{q_0}(\mathbb{R}^n)}\\
&\le C\lf\|M^{\sharp}\([b,I_{\alpha}](f)\)\r\|_{\mathcal{M}_{\vec{q}}^{q_0}(\mathbb{R}^n)}\\
&\le C\|b\|_{BMO(\mathbb{R}^n)}\left(\lf\|I_{\alpha}(|f|)\r\|_{\mathcal{M}_{\vec{q}}^{q_0}(\mathbb{R}^n)} +\|I_{r\alpha}(|f|^r)^{\frac{1}{r}}\|_{\mathcal{M}_{\vec{q}}^{q_0}(\mathbb{R}^n)}\right)\\
&=   C\|b\|_{BMO(\mathbb{R}^n)}\left(\lf\|I_{\alpha}(|f|)\r\|_{\mathcal{M}_{\vec{q}}^{q_0}(\mathbb{R}^n)} +\lf\|I_{r\alpha}(|f|^r)\r\|^{\frac{1}{r}}_{\mathcal{M}_{\vec{q}/r}^{q_0/r}(\mathbb{R}^n)}\right)\\
&\le C\|b\|_{BMO(\mathbb{R}^n)}\|f\|_{\mathcal{M}_{\vec{p}}^{p_0}(\mathbb{R}^n)}.
\end{align*}

$(\romannumeral2)\Rightarrow(\romannumeral1)$. Assume that $[b,I_\alpha]$ is bounded from $\mathcal{M}_{\vec{p}}^{p_0}(\mathbb{R}^n)$ to $\mathcal{M}_{\vec{q}}^{q_0}(\mathbb{R}^n)$. We use the same method as Janson \cite{32}. Choose $0\neq z_0\in\mathbb{R}^n$ such that $0\notin Q(z_0,2\sqrt{n})$. Then for $x\in Q(z_0,2\sqrt{n})$, $|x|^{n-\alpha}\in C^{\infty}(Q(z_0,2\sqrt{n}))$. Hence, $|x|^{n-\alpha}$ can be written as the absolutely convergent Fourier series:
\begin{equation}\label{FE}
|x|^{n-\alpha}\chi_{Q(z_0,2\sqrt{n})}(x)=\sum_{m\in \mathbb{Z}^n}a_me^{2im\cdot x}\chi_{Q(z_0,2\sqrt{n})}(x)
\end{equation}
with $\sum_{m\in \mathbb{Z}^n}|a_m|<\infty$.

For any $x_0\in\mathbb{R}^n$ and $t>0$, let $Q=Q(x_0,t)$ and $Q_{z_0}=Q(x_0+z_0t,t)$. Let $s(x)=\overline{sgn(\int_{Q'}(b(x)-b(y))dy)}$, then
\begin{align*}
\frac{1}{|Q|}\int_Q|b(x)-b_{Q_{z_0}}|
&=\frac{1}{|Q|}\frac{1}{|Q_{z_0}|}\int_Q\left|\int_{Q_{z_0}}(b(x)-b(y))dy\right|dx\\
&=\frac{1}{|Q|}\frac{1}{|Q_{z_0}|}\int_Q\int_{Q_{z_0}}s(x)(b(x)-b(y))dydx\\
&=t^{-2n}\int_Q\int_{Q_{z_0}}s(x)(b(x)-b(y))dydx.
\end{align*}
If $x\in Q$ and $y\in Q_{z_0}$, then $\frac{y-x}{t}\in Q(z_0,2\sqrt{n})$. Hence, (\ref{FE}) shows that
\begin{align*}
\frac{1}{|Q|}\int_Q|b(x)-b_{Q_{z_0}}|
&=t^{-n-\alpha}\int_Q\int_{Q_{z_0}}s(x)(b(x)-b(y))|x-y|^{\alpha-n}\left(\frac{|x-y|}{t}\right)^{n-\alpha}dydx\\
&=t^{-n-\alpha}\sum_{m\in\mathbb{Z}^n}a_m\int_Q\int_{Q_{z_0}}s(x)(b(x)-b(y))|x-y|^{\alpha-n}e^{-2im\cdot \frac{y}{t}}dy\times e^{2im\cdot \frac{x}{t}}dx\\
&=t^{-n-\alpha}\sum_{m\in\mathbb{Z}^n}a_m\int_Q[b,I_\alpha](e^{-2im\cdot \frac{\cdot}{t}}\chi_{Q_{z_0}})(x)\times s(x)e^{2im\cdot \frac{x}{t}}dx.
\end{align*}

By the definition of K\"othe dual spaces,
$$\frac{1}{|Q|}\int_Q|b(x)-b_{Q_{z_0}}|
\le t^{-n-\alpha}\sum_{m\in\mathbb{Z}^n}a_m\lf\|[b,I_\alpha](e^{-2im\cdot \frac{\cdot}{t}}\chi_{Q_{z_0}})\r\|_{\mathcal{M}_{\vec{q}}^{q_o}(\mathbb{R}^n)} \lf\|s\cdot e^{-2im\cdot \frac{\cdot}{t}}\chi_Q\r\|_{\mathcal{B}_{\vec{q}\,'}^{q'_0}(\mathbb{R}^n)}.$$
It is easy to calculate
$$\lf\|s\cdot e^{-2im\cdot \frac{\cdot}{t}}\chi_Q\r\|_{\mathcal{B}_{\vec{q}\,'}^{q'_0}(\mathbb{R}^n)} =\lf\|\chi_Q\r\|_{\mathcal{B}_{\vec{q}\,'}^{q'_0}(\mathbb{R}^n)}\lesssim t^{n-\frac{n}{{q_0}}}.$$
Hence,
$$\frac{1}{|Q|}\int_Q|b(x)-b_{Q_{z_0}}| =t^{-\frac{1}{q_0}-\alpha}\sum_{m\in\mathbb{Z}^n}a_m\left\|[b,I_\alpha](e^{-2im\cdot \frac{\cdot}{t}}\chi_{Q_{z_0}})\right\|_{\mathcal{M}_{\vec{q}}^{q_0}(\mathbb{R}^n)}.$$
According to the hypothesis
\begin{align*}
&\quad\frac{1}{|Q|}\int_Q|b(x)-b_{Q_{z_0}}|\\
&\le t^{-\frac{1}{q_0}-\alpha}\sum_{m\in\mathbb{Z}^n}a_m\lf\|e^{-2im\cdot \frac{\cdot}{t}}\chi_{Q_{z_0}}\r\|_{\mathcal{M}_{\vec{p}}^{p_0}(\mathbb{R}^n)} \lf\|[b,I_\alpha]\r\|_{\mathcal{M}_{\vec{p}}^{p_0}(\mathbb{R}^n)\rightarrow \mathcal{M}_{\vec{q}}^{q_0}(\mathbb{R}^n)}\\
&=t^{-\frac{1}{q_0}-\alpha+\frac{1}{p_0}} \sum_{m\in\mathbb{Z}^n}a_m\lf\|[b,I_\alpha]\r\|_{\mathcal{M}_{\vec{p}}^{p_0}(\mathbb{R}^n)\rightarrow \mathcal{M}_{\vec{q}}^{q_0}(\mathbb{R}^n)}\\
&\le \sum_{m\in\mathbb{Z}^n}|a_m|\|[b,I_\alpha]\|_{\mathcal{M}_{\vec{p}}^{p_0}(\mathbb{R}^n)\rightarrow \mathcal{M}_{\vec{q}}^{q_0}(\mathbb{R}^n)}
\le C\|[b,I_\alpha]\|_{\mathcal{M}_{\vec{p}}^{p_0}(\mathbb{R}^n)\rightarrow \mathcal{M}_{\vec{q}}^{q_0}(\mathbb{R}^n)}.
\end{align*}
Thus, we have
$$\frac{1}{|Q|}\int_Q|b(x)-b(y)|dx\le\frac{2}{|Q|}\int_Q|b(x)-b_{Q_{z_0}}|dx\le C\|[b,I_\alpha]\|_{L^{\vec{p}}(\mathbb{R}^n)\rightarrow L^{\vec{q}}(\mathbb{R}^n)}$$
This prove $b\in BMO(\mathbb{R}^n)$.

The proof of Theorem \ref{th6.3} is complete.
\end{proof}

\begin{proof}[Proof of Theorem \ref{th6.1}]
(\romannumeral1) Let $U_{\vec{p}}^{p_0}$ denote the unite ball on $\mathcal{M}_{\vec{p}}^{p_0}(\mathbb{R}^n)$.
Suppose that $I_{\alpha}$ is bounded from $\mathcal{M}_{\vec{p}}^{p_0}(\mathbb{R}^n)$ to $\mathcal{M}_{\vec{q}}^{q_0}(\mathbb{R}^n)$. By Definition \ref{def2.3}
$$\lf\|I_{\alpha}f\r\|_{\mathcal{B}_{\vec{p}\,'}^{p'_0}(\mathbb{R}^n)}
=\sup_{g\in U_{\vec{p}}^{p_0}} \int_{\mathbb{R}^n}|g(x)I_{\alpha}f(x)|dx.$$
In view of (\ref{eq1.1}), this implies
$$\|I_{\alpha}f\|_{\mathcal{B}_{\vec{p}\,'}^{p'_0}(\mathbb{R}^n)}
\le\sup_{g\in U_{\vec{p}}^{p_0}} \int_{\mathbb{R}^n}|f(x)|I_{\alpha}(|g|)(x)dx.$$
Therefore, by the assumption and (\romannumeral2) of Remark \ref{re2.4},
\begin{align*}
\|I_{\alpha}f\|_{\mathcal{B}_{\vec{p}\,'}^{p'_0}(\mathbb{R}^n)}
&\le\|f\|_{\mathcal{B}_{\vec{q}\,'}^{q'_0}(\mathbb{R}^n)}
\sup_{g\in U_{\vec{p}}^{p_0}}
\lf\|I_{\alpha}(|g|)\r\|_{\mathcal{M}_{\vec{q}}^{q_0}(\mathbb{R}^n)}
\lesssim \|f\|_{\mathcal{B}_{\vec{q}\,'}^{q'_0}(\mathbb{R}^n)}.
\end{align*}
The inverse is similar to the above, so the details are omitted.

(\romannumeral2) By the same way as (\romannumeral1) and (\romannumeral4) of Remark \ref{re2.4},
\begin{align*}
\|[b,I_\alpha]f\|_{\mathcal{B}_{\vec{p}\,'}^{p'_0}(\mathbb{R}^n)}
&=\sup_{g\in U_{\vec{p}}^{p_0}} \left|\int_{\mathbb{R}^n}g(x)[b,I_\alpha]f(x)dx\right|\\
&=\sup_{g\in U_{\vec{p}}^{p_0}} \left|\int_{\mathbb{R}^n}f(x)[b,I_\alpha](g)(x)dx\right|\\
&\le\|f\|_{\mathcal{B}_{\vec{q}\,'}^{q'_0}(\mathbb{R}^n)}
\sup_{g\in U_{\vec{p}}^{p_0}}
\|[b,I_\alpha](g)\|_{\mathcal{M}_{\vec{q}}^{q_0}(\mathbb{R}^n)}\\
&\lesssim \|f\|_{\mathcal{B}_{\vec{q}\,'}^{q'_0}(\mathbb{R}^n)}.
\end{align*}
The inverse is similar to the preceeding argument, so we omit it. The proof of Theorem \ref{th6.1} is complete.
\end{proof}

\bigskip
\noindent Houkun Zhang\\
College of Mathematics and System Sciences\\
Xinjiang University\\
Urumqi 830046, China\\
\smallskip
\noindent\emph{Email address}: \texttt{zhanghkmath@163.com}\\

\noindent Jiang Zhou\\
College of Mathematics and System Sciences\\
Xinjiang University\\
Urumqi 830046, China\\
\smallskip
\noindent\emph{Email address}: \texttt{zhoujiang@xju.edu.cn}\\
\end{document}